\DeclareMathAlphabet{\mathpzc}{OT1}{pzc}{m}{it}
\newtheorem{theorem}{Theorem}[section]
\newtheorem*{theorem*}{Theorem}
\newtheorem{conjecture}{Conjecture}[section]
\newtheorem{lemma}[theorem]{Lemma}
\newtheorem{proposition}[theorem]{Proposition}
\newtheorem{corollary}[theorem]{Corollary}
\newtheorem{observation}[theorem]{Observation}
\newtheorem{fact}[theorem]{Fact}
\newtheorem{claim}[theorem]{Claim}
\theoremstyle{definition}
\newtheorem{definition}[theorem]{Definition}
\theoremstyle{remark}
\newtheorem{remark}{Remark}
\newtheorem{question}{Question}
\def\hook{\upharpoonright}
\def\forces{\Vdash}
\def\Me{\mathcal M}
\def\Null{\mathcal N}
\def\ZFC{\mathsf{ZFC}}
\def\PFA{\mathsf{PFA}}
\def\MA{\mathsf{MA}}
\def\BA{\mathsf{BA}}
\def\baire{\omega^\omega}
\def\mfb{\mathfrak b}
\def \mfd{\mathfrak{d}}
\def\GCH {\mathsf{GCH}}
\def\CH {\mathsf{CH}}
\def\Q{\mathbb Q}
\def\P{\mathbb P}
\def\cc{2^{\aleph_0}}
\def\mfp{\mathfrak{p}}
\def\non{{\rm non}}
\def\cov{{\rm cov}}
\title{Weak Baumgartner Axioms and Universal Spaces}
\author[Switzer]{Corey Bacal Switzer}
\address[C.~B.~Switzer]{Institut f\"{u}r Mathematik, Kurt G\"odel Research Center, Universit\"{a}t Wien, Kolingasse 14-16, 1090 Wien, AUSTRIA}
\email{corey.bacal.switzer@univie.ac.at}
\thanks{\emph{Acknowledgments:} This research was funded in whole or in part by the Austrian Science Fund (FWF) through the following grant: 10.55776/ESP548.}
\begin{document}

\begin{abstract}
    If $X$ is a topological space and $\kappa$ is a cardinal then $\BA_\kappa (X)$ is the statement that for each pair $A, B \subseteq X$ of $\kappa$-dense subsets there is an autohomeomorphism $h:X \to X$ mapping $A$ to $B$. In particular $\BA_{\aleph_1} (\mathbb R)$ is equivalent the celebrated Baumgartner axiom on isomorphism types of $\aleph_1$-dense linear orders. In this paper we consider two natural weakenings of $\BA_\kappa (X)$ which we call $\BA^-_\kappa (X)$ and $\mathsf{U}_\kappa (X)$ for arbitrary perfect Polish spaces $X$. We show that the first of these, though properly weaker, entails many of the more striking consequences of $\BA_\kappa (X)$ while the second does not. Nevertheless the second is still independent of $\ZFC$ and we show in particular that it fails in the Cohen and random models. This motivates several new classes of pairs of spaces which are ``very far from being homeomorphic" which we call ``avoiding", ``strongly avoiding", and ``totally avoiding". The paper concludes by studying these classes, particularly in the context of forcing theory, in an attempt to gauge how different weak Baumgartner axioms may be separated.
\end{abstract}

\maketitle

\section{Introduction}

Cantor's celebrated theorem on countable dense linear orders states (in part) that given any two countable dense $A, B \subseteq \mathbb R$ there is a linear order isomorphism $h:A \to B$ between them. It's easy to see that this in turn immediately implies that $\mathbb R$ is {\em CDH} (``countable dense homogeneous") i.e. that for $A$ and $B$ countable and dense sets of reals there is an autohomeomorphism $h:\mathbb R \to \mathbb R$ so that $h``A = B$. It is natural to ask whether Cantor's theorem can lift to the uncountable. Recall that a set $A \subseteq \mathbb R$ is called {\em $\aleph_1$-dense} if its intersection with each nonempty interval has size $\aleph_1$. Baumgartner in \cite{Baum73} proved the consistency of ``all $\aleph_1$-dense sets of reals are order isomorphic", which has since become known as $\BA$ for ``Baumgartner's axiom". Baumgartner's axiom has a natural topological phrasing which allows one to state parametrized principles $\BA (X)$ for arbitrary topological spaces $X$ ($\BA$ itself is $\BA (\mathbb R)$), see below for precise definitions. In this paper we study these parametrized principles in the general setting of Polish spaces i.e. separable completely metrizable spaces with an eye towards comparing their related Baumgartner axioms.

Already it was known from the work of Abraham and Shelah \cite{AvrahamShelah81}, Stepr\={a}ns-Watson \cite{Stepranswatson87}, Baldwin-Beaudoin \cite{BaldwinBeaudoin89} and others that there are essential differences between the case of the real line $\mathbb R$ and the cases of non one-dimensional spaces like $\mathbb R^n$ with $n > 1$ or the Cantor and Baire spaces $2^\omega$ and $\omega^\omega$. For example, while the latter three have their associated Baumgartner axioms follow from $\MA$ (and indeed even $\mathfrak{p} > \aleph_1$), $\BA$ itself does not follow from $\MA$. Very strikingly Todor\v{c}evi\'c and Guzm\'an recently showed that this remains true even in the presence of the $P$-ideal dichotomy (\cite{TodorcevicGuzman24}). It follows from these results that in particular $\BA = \BA (\mathbb R)$ is not implied by its higher dimensional analogues, $\BA (\mathbb R^n)$. Left open is whether the converse is true, which was conjectured to be the case by Stepr\={a}ns and Watson in \cite{Stepranswatson87}.

\begin{conjecture}(Stepr\={a}ns-Watson, \cite{Stepranswatson87})
    $\BA$ implies $\BA (\mathbb R^n)$ for all finite $n > 1$. \label{conj}
\end{conjecture}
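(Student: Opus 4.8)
The plan is to leverage the one–dimensional rigidity supplied by $\BA$ to manufacture autohomeomorphisms of $\mathbb R^n$, using $\BA$ itself as the subroutine that resolves each one–dimensional matching problem inside a back–and–forth construction. Fix $\aleph_1$-dense $A, B \subseteq \mathbb R^n$. I would first put both sets into a convenient \emph{general position}: writing $\mathbb R^n = \mathbb R \times \mathbb R^{n-1}$ and using that an $\aleph_1$-dense set meets every box in exactly $\aleph_1$ points, a shear/rotation argument (entirely within $\ZFC$) should arrange that all coordinate projections of $A$ and of $B$ are themselves $\aleph_1$-dense in the relevant lower–dimensional factors. The point of this normalization is to expose the one–dimensional data on which $\BA$ can act.

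Next I would attempt to build the homeomorphism coordinate by coordinate. Applying $\BA$ to the first–coordinate projections yields an autohomeomorphism $g$ of $\mathbb R$ carrying $\pi_1``A$ onto $\pi_1``B$; lifting it to $g \times \mathrm{id}_{\mathbb R^{n-1}}$ replaces $A$ by a set $A'$ sharing its first–coordinate projection with $B$. One then hopes to iterate through the remaining coordinates and to finish by a back–and–forth enumeration $A = \{a_\xi : \xi < \omega_1\}$, $B = \{b_\xi : \xi < \omega_1\}$, producing an increasing chain of partial maps, each a restriction of a piecewise–nice (say piecewise linear) homeomorphism, whose union extends to a global autohomeomorphism of $\mathbb R^n$ sending $A$ to $B$. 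This is the same philosophy by which $\BA(\mathbb R^n)$ is known to follow from $\mfp > \aleph_1$, but with the uncountably many ``free'' extensions available there replaced by appeals to $\BA$. Note that, since we want an \emph{implication} rather than a consistency result, the whole argument must be carried out absolutely inside an arbitrary model of $\BA$, so no forcing is permitted and the combinatorial derivation has to be explicit.

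The heart of the matter — and, I expect, the reason the conjecture remains open — is that autohomeomorphisms of $\mathbb R^n$ for $n > 1$ do not decompose into one–dimensional pieces: a homeomorphism of $\mathbb R^2$ mixes its coordinates in a way no product or coordinatewise map can imitate, so after matching the first projection there is no canonical residual one–dimensional problem left for $\BA$ to solve, and the coordinatewise iteration stalls. Phrased abstractly, one expects the obstruction to $\BA(\mathbb R^n)$ to be an $n$-dimensional entangledness phenomenon, and while $\BA$ demonstrably rules out the one–dimensional obstructions to $\BA(\mathbb R)$, it is far from clear that it controls these higher–dimensional ones — a set could in principle behave in an $n$-entangled fashion while all of its coordinate projections remain tame. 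Making the back–and–forth go through therefore demands a bridge showing that $\BA$ forbids precisely these higher–dimensional obstructions, a passage from one–dimensional to $n$-dimensional combinatorics that I do not see how to cross and which is exactly where a genuine proof of the Stepr\={a}ns–Watson conjecture must do its real work.
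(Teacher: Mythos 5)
This statement is a \emph{conjecture}, not a theorem: the paper offers no proof of it and explicitly states that the Stepr\={a}ns--Watson conjecture ``is still open,'' so there is nothing in the paper to compare your argument against. Your proposal, to its credit, is candid about this --- the final paragraph concedes that the coordinatewise back--and--forth stalls and that you do not see how to cross the passage from one--dimensional to $n$-dimensional combinatorics. That concession is correct, and it means what you have written is a discussion of why the problem is hard, not a proof. As a submission purporting to establish the implication, it has a fatal gap exactly where you locate it: after matching the first--coordinate projections there is no residual one--dimensional matching problem for $\BA$ to solve, because autohomeomorphisms of $\mathbb R^n$ do not factor through the coordinate axes, and an $\aleph_1$-dense set in $\mathbb R^n$ can carry higher--dimensional ``entangled'' structure invisible to all of its projections. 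No amount of shearing into general position repairs this.

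One concrete remark worth adding: the only known route to the conjecture, recorded in the paper's introduction, is the observation of Stepr\={a}ns and Watson that it would suffice to prove that $\BA$ implies $\mathfrak{p} > \aleph_1$, since $\mathfrak{p} > \aleph_1$ already yields $\BA(\mathbb R^n)$ for all $n > 1$ (and indeed $\BA_{\aleph_1}(X)$ for the zero--dimensional spaces as well). So the reduction that is actually on the table is not dimensional but cardinal--characteristic--theoretic: one tries to extract a strong combinatorial consequence from $\BA$ (in the spirit of Todor\v{c}evi\'c's theorem that $\BA$ implies $\mathfrak b > \aleph_1$) rather than to simulate an $n$-dimensional homeomorphism by one--dimensional ones. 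Whether $\BA$ implies $\mathfrak{p} > \aleph_1$ is itself open, so neither your route nor this one currently closes the problem.
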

The Stepr\={a}ns-Watson conjecture is still open and motivates in some sense this work. In \cite{Stepranswatson87} Stepr\={a}ns and Watson observed that to confirm Conjecture \ref{conj} it would suffice to show that $\BA$ implies $\mathfrak{p} > \aleph_1$, noting that Todor\v{c}evi\'c had already shown $\BA$ implies $\mathfrak b > \aleph_1$ (\cite{todorcevic89}), thus making the possibility that $\BA$ affects other cardinal characteristics plausible. 

\begin{question}
    Does $\BA$ imply $\mathfrak{p} > \aleph_1$?
\end{question}
This question is also open and has been mentioned throughout the literature, for example see the introduction to \cite{TodorcevicGuzman24}. 

In attempting to wrestle with these ideas we were led to a more general study of ``topological Baumgartner theory" which lifts the study of CDH spaces to the uncountable. In conversations with Andrea Medini relating to this he suggested natural weakenings of $\BA$ and asked whether they could be separated. The main results of this paper show how to separate these weakenings as well as place them more generally in the context of other more well studied axioms and models. 

In order to explain the content of the current paper now more clearly we recall the following definitions. Given a cardinal $\kappa$ and a topological space $X$, a set $A \subseteq X$ is said to be $\kappa$-{\em dense} if $A \cap U$ has size $\kappa$ for every nonempty open subset $U \subseteq X$. We denote by $\BA_\kappa (X)$ the statement that for every pair $A, B \subseteq X$ which are $\kappa$-dense there is an autohomeomorphism $h:X \to X$ so that $h``A = B$. When $\kappa = \aleph_1$ we often drop the subscript. It's not hard to check that $\BA$ is equivalent to $\BA (\mathbb R)$, see the Proposition on page 11 of \cite{Stepranswatson87}. An obvious weakening of $\BA_\kappa (X)$, for any $\kappa$ and $X$, is simply to say that all $\kappa$-dense subsets of $X$ are homeomorphic. We denote this statement\footnote{A more finegrained approach to these axioms is given in Section 2 below.} by $\BA^-_\kappa (X)$. Unlike the standard Baumgartner axioms, $\BA_\kappa^-(X)$ is equivalent for $X$ e.g. Baire space, Cantor space or the real line and in fact, for these it is equivalent to ``for all perfect Polish spaces $X$, $\BA^-_\kappa (X)$ holds", see Lemma \ref{BA-equivs} below. Hence, at least in the case of the real line, this is a proper weakening. Nevertheless, it implies some of the more striking consequences of $\BA$. 
\begin{theorem}
    Let $X$ be a perfect Polish space. The axiom $\BA_{\aleph_1}^-(X)$ implies the following: 
    
    \begin{enumerate}
        \item (Medini, Unpublished) $\mfb > \aleph_1$

    \item (Lemma \ref{BA-impliesCA} below)  $2^{\aleph_1} = 2^{\aleph_0}$
    \end{enumerate}
\end{theorem}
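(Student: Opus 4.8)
The plan is to treat the two parts by different methods: part (2) is a cardinal-arithmetic counting argument, while part (1) is best attacked by contraposition, extracting from $\mfb=\aleph_1$ two $\aleph_1$-dense sets that cannot be homeomorphic. By Lemma \ref{BA-equivs} I may choose the space freely, so throughout I work in $X=\cantor$, using that every $\aleph_1$-dense subset of a perfect Polish space has size exactly $\aleph_1$ (as the space is second countable). For part (2) I would first count the $\aleph_1$-dense subsets of $X$: there are at most $\mfc^{\aleph_1}=2^{\aleph_1}$ subsets of size $\aleph_1$, and at least $2^{\aleph_1}$ that are $\aleph_1$-dense, since if a fixed $\aleph_1$-dense set is split as $D_0\sqcup D_1$ with both halves $\aleph_1$-dense, then the sets $D_0\cup S$ for $S\subseteq D_1$ are $\aleph_1$-dense and pairwise distinct. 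Now fix one $\aleph_1$-dense $A_0$. Under $\BA^-_{\aleph_1}(X)$ every $\aleph_1$-dense $B$ is homeomorphic to $A_0$, hence is the image of a continuous map $A_0\to X$; since $A_0$ is separable it has a countable dense subset $D$, and because $X$ is Hausdorff such a map is determined by its restriction to $D$, of which there are only $\mfc^{\aleph_0}=2^{\aleph_0}$. Thus there are at most $2^{\aleph_0}$ homeomorphic copies of $A_0$, and comparing the two counts gives $2^{\aleph_1}\le 2^{\aleph_0}$; the reverse inequality is automatic.

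For part (1), assuming $\mfb=\aleph_1$, I would separate two $\aleph_1$-dense sets by the homeomorphism invariant of \emph{concentration}: a space $Y$ is concentrated if some countable $Q\subseteq Y$ has $Y\setminus U$ countable for every open $U\supseteq Q$. A non-concentrated $\aleph_1$-dense set $B$ exists in $\ZFC$: take pairwise disjoint perfect sets $\langle P_\alpha:\alpha<\omega_1\rangle$, choose $S_\alpha\subseteq P_\alpha$ of size $\aleph_1$, and set $B=B_0\cup\bigcup_\alpha S_\alpha$ for some $\aleph_1$-dense $B_0$ of size $\aleph_1$. Any countable $Q\subseteq B$ meets only countably many of the disjoint $P_\alpha$, so for some $\alpha_0$ we have $Q\cap P_{\alpha_0}=\emptyset$ while $B\cap P_{\alpha_0}\supseteq S_{\alpha_0}$ is uncountable; then $U=X\setminus P_{\alpha_0}$ is an open set containing $Q$ with $B\setminus U$ uncountable, so $B$ is not concentrated on $Q$, and as $Q$ was arbitrary, $B$ is not concentrated.

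For the concentrated partner I would use the hypothesis. It is classical that $\mfb=\aleph_1$ produces a concentrated set of size $\aleph_1$ from a $\le^*$-increasing unbounded $\omega_1$-sequence; to obtain $\aleph_1$-density as well I would glue piecewise. Fixing a clopen base $\langle[s_n]:n<\omega\rangle$ of $X$, I place in each piece $[s_n]$ a copy $E_n$ of such a concentrated set, concentrated on a countable $D_n\subseteq[s_n]$, and set $A=\bigcup_n E_n$. Every basic open set equals some $[s_k]$ and hence contains the $\aleph_1$-sized $E_k$, so $A$ is $\aleph_1$-dense; and for open $U\supseteq D:=\bigcup_n D_n$ each $E_n\setminus U$ is countable, so $A\setminus U$ is countable and $A$ is concentrated on the countable set $D$. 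Since $A$ is concentrated and $B$ is not, $A\not\cong B$, contradicting $\BA^-_{\aleph_1}(X)$. The main obstacle is precisely the classical existence step, which is where $\mfb=\aleph_1$ is used, together with checking that the gluing preserves concentration while forcing $\aleph_1$-density; everything else is bookkeeping.
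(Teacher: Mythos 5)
Your proposal is correct, and it diverges from the paper in an interesting way, so let me compare. For part (2) the paper's Lemma \ref{BA-impliesCA} is also a counting argument: it partitions a $\kappa$-dense $A$ into $\kappa$ countable dense pieces, forms $2^\kappa$ many $\kappa$-dense unions $A_Z$, and uses Lavrentiev's theorem (Fact \ref{Lavthm}) to lift each homeomorphism $A\to A_Z$ to a homeomorphism between $G_\delta$ sets, of which there are only $\mfc$; two distinct $Z$'s then share a lift, a contradiction. Your count is the same in spirit but slightly more elementary: instead of Lavrentiev you observe that a continuous map on $A_0$ into a Hausdorff space is determined by its restriction to a countable dense subset, so there are at most $\mfc$ homeomorphic copies of $A_0$, against your $2^{\aleph_1}$ many $\aleph_1$-dense sets $D_0\cup S$. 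Both work; the paper's version generalizes verbatim to $2^\kappa=\mfc$ for all uncountable $\kappa\le\mfc$, while yours avoids any appeal to extension theorems. The more substantial difference is part (1): the paper gives \emph{no proof} of this item, citing it as an unpublished result of Medini (Theorem \ref{medini}), so your argument is a genuine self-contained substitute. It is correct: concentration is an intrinsic property of the subspace topology (relatively open supersets of $Q$ are exactly traces of ambient open sets), hence a homeomorphism invariant; Rothberger's theorem gives an uncountable set concentrated on a countable set exactly when $\mfb=\aleph_1$ (a $\le^*$-increasing unbounded $\omega_1$-sequence escapes every $\sigma$-compact subset of $\baire$ off a countable initial segment); your gluing over a clopen base of $\cantor$ preserves concentration while achieving $\aleph_1$-density; and your $B$ built from $\aleph_1$ disjoint perfect sets is visibly non-concentrated. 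One small remark: Medini's theorem as stated in the paper also rules out $\kappa=\mfd$ and applies to arbitrary Cantor-crowded spaces, which your concentration argument does not obviously recover, but for the statement as posed (perfect Polish $X$, $\mfb>\aleph_1$) your reduction to $\cantor$ via Lemma \ref{BA-equivs} is legitimate and the proof is complete.
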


An even further weakening is to say that there is a $\kappa$-dense subset $Z \subseteq X$ into which every other $A \subseteq X$ of size $\kappa$ can be embedded. We denote this statement by $\mathsf{U}_\kappa (X)$ ($\mathsf{U}$ for ``universal"). While we have little to say currently about the $\BA^-_{\kappa}(X)$ axioms, there is in fact much that can be said about these axioms and unwinding them takes up the majority of the paper. 

The $\mathsf{U}_\kappa (X)$ axioms are clearly very weak and hold in $\ZFC$ for $\kappa = \aleph_0$ or $\kappa = 2^{\aleph_0}$ for, e.g. $X = 2^\omega$ (in fact the parametrizations are superfluous as we will see). Nevertheless for intermediate values of $\kappa$ we show that they may fail and in fact does so in the Cohen and random models.

\begin{theorem}[See Theorem \ref{Ufailsthm} below]
    If $\kappa < \lambda$ are uncountable cardinals then $\mathsf{U}_\kappa (X)$ fails in the model obtained by adding either $\lambda$ many Cohen reals or $\lambda$ many random reals for any perfect Polish space $X$. \label{Cohen1}
\end{theorem}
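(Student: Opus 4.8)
The plan is to prove the contrapositive by a forcing argument. Work in $V[G]$, where $G$ is generic for the finite-support product $\mathbb{C}_\lambda = \mathrm{Fn}(\lambda\times\omega,2)$ adding $\lambda$ Cohen reals $\langle c_\alpha : \alpha<\lambda\rangle$ over the ground model $V$ (the random case is handled identically throughout, with Baire category replaced by Lebesgue measure). Fix an arbitrary $\kappa$-dense $Z\subseteq X$ (in fact the argument uses only $|Z|=\kappa$) and aim to produce a set $A\subseteq X$ with $|A|=\kappa$ admitting no topological embedding into $Z$. It suffices to treat $X=2^\omega$: a fresh block of mutually generic reals is $\kappa$-dense in any perfect Polish space, and the verification below uses only zero-dimensionality together with the continuous reading of names, so the general case follows \emph{mutatis mutandis}. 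First I localize $Z$: since $\mathbb{C}_\lambda$ is ccc and $|Z|=\kappa$, each point of $Z$ has a name of countable support, so there is $S\in[\lambda]^\kappa$ with $Z\in M:=V[G\hook (S\times\omega)]$. The hypothesis $\kappa<\lambda$ is used crucially here: the complement $\lambda\setminus S$ still has size $\lambda$, and $\mathfrak{c}$ remains $\ge\lambda>\kappa$ in all the intermediate extensions $M'\supseteq M$ appearing below.

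Next I build the witness. Choose $T\in[\lambda\setminus S]^\kappa$ and set $A=\{c_\gamma : \gamma\in T\}$, a set of $\kappa$ mutually Cohen-generic reals over $M$ which, after a routine genericity arrangement, I take to be $\kappa$-dense, so that every basic clopen set contains $\kappa$ many of the $c_\gamma$ and every point of $A$ is an accumulation point of $A$. Writing $N_\gamma:=V[G\hook((\lambda\setminus\{\gamma\})\times\omega)]$, each $c_\gamma$ is Cohen-generic over $N_\gamma$, while $Z$ and $A\setminus\{c_\gamma\}$ both lie in $N_\gamma$ and $|Z|=\kappa<\lambda\le\mathfrak{c}^{N_\gamma}$.

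The heart of the matter is to show $A$ does not embed into $Z$. Suppose toward a contradiction that $f:A\to Z$ is a topological embedding in $V[G]$, and fix $\gamma\in T$; then $z_\gamma:=f(c_\gamma)\in Z\subseteq N_\gamma$ is a real of $N_\gamma$ read from the single Cohen real $c_\gamma$ over $N_\gamma$. By the continuous reading of names there is a Borel function $g_\gamma$, coded in $N_\gamma$, and a finite condition $t_\gamma\subseteq c_\gamma$ with $g_\gamma(c_\gamma)=z_\gamma$ and $g_\gamma\hook[t_\gamma]$ mapping comeagerly into $Z$; its range is then an analytic subset of $Z$ of size $\le\kappa<\mathfrak{c}^{N_\gamma}$, so by the perfect set property this range is countable, whence $g_\gamma$ is constant, equal to $z_\gamma$, on a comeager subset of some clopen neighborhood of $c_\gamma$. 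In other words, the value $f(c_\gamma)$ is already decided by a finite initial segment of $c_\gamma$ together with $N_\gamma$-information. I would then amalgamate these local readings across the $\kappa$ coordinates of $T$, using the $\kappa$-density of $A$ and the homogeneity of $\mathbb{C}_\lambda$ under coordinate permutations fixing $S$, so as to force two distinct points of $A$ to the same value in $Z$, contradicting injectivity of $f$. This amalgamation is the main obstacle: the functions $g_\gamma$ live over distinct submodels $N_\gamma$, so transferring one coordinate's comeager (respectively, full-measure) fiber so that it captures the generic value at another coordinate is precisely the step where the genericity of the $c_\gamma$ must be exploited with care, and it is here that the Cohen and random arguments run in parallel via category and measure. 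Once such a collision is obtained, $Z$ is seen not to be universal; since $Z$ was an arbitrary $\kappa$-dense set, $\mathsf{U}_\kappa(X)$ fails.
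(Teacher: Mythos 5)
Your overall strategy (localize $Z$ in an intermediate model via the ccc and $\kappa<\lambda$, take $\kappa$ fresh Cohen/random reals as the witness $A$, and use genericity to kill embeddings of $A$ into $Z$) matches the paper's, but the core of your verification has a genuine gap that the paper's proof is specifically designed to avoid. You read each value $f(c_\gamma)$ from a name over $N_\gamma=V[G\hook((\lambda\setminus\{\gamma\})\times\omega)]$, a model that contains \emph{all the other generic reals}. Consequently the functions $g_\gamma$ live over pairwise different, very large models, and no genericity of $c_{\gamma'}$ over $N_\gamma$ is available for $\gamma'\neq\gamma$ (indeed $c_{\gamma'}\in N_\gamma$). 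This is exactly why your amalgamation step --- which you yourself flag as the main obstacle and leave unexecuted --- cannot be carried out as sketched: there is no automorphism swapping coordinates that fixes the name $\dot f$, and knowing that $f(c_\gamma)$ is decided by a finite piece of $c_\gamma$ \emph{over $N_\gamma$} says almost nothing, since $N_\gamma$ already knows everything else. (A secondary soft spot: the claim that the range of $g_\gamma$ on a comeager set is an analytic subset of $Z$ needs justification; with the ``decided value'' reading, $g_\gamma$ is locally constant on a dense open set, which gives countable range directly --- but this still only controls one coordinate at a time and does not produce the collision you need.)

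The missing idea is the Kuratowski--Lavrentiev lift (Fact \ref{Lavthm}), which is also where continuity of $f$ actually enters (it must, since mere injections $A\to Z$ always exist). Extend $f$ to a continuous $\hat f:E\to F$ between $G_\delta$ sets; since $\hat f$, $E$, $F$ are coded by a single real, they lie in $V[c_k : k\in K]$ for a \emph{countable} $K\subseteq\lambda$. Now every $c_\gamma$ with $\gamma\in T\setminus K$ is Cohen (resp.\ random) over that one small model, so for each value $x$ attained by $f$ on $\{c_\gamma : \gamma\in T\setminus K\}$ the fiber $\hat f^{-1}(x)$ is a Borel set coded in $V[c_k : k\in K]$ containing a Cohen (random) real over it, hence non-meager (of positive measure). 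Distinct values give pairwise disjoint such fibers, and there is no uncountable pairwise disjoint family of non-meager (positive-measure) Borel sets, so $f$ has countable range and in particular is not injective on the uncountable set $A$. This localization of the whole map to a single countably generated submodel is the step your proposal lacks, and without it the argument does not close.
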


However $\mathsf{U}_\kappa (X)$ fails to have either of the consequences listed above.
\begin{theorem}
    It is consistent that $2^{\aleph_0}=\aleph_2$ and $\mathsf{U}_{\aleph_1} (\mathbb R)$ holds but $2^{\aleph_1} = \aleph_3$ and $\mfb = \aleph_1$.  \label{noconsequencesofU1}
\end{theorem}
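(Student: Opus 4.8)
The plan is to decouple the three cardinal-arithmetic requirements from the universality requirement and to realize them in successive phases of forcing, taking care that each later phase preserves what the earlier ones achieve. Begin with a ground model $V \models \GCH$ and first force with the countably closed poset $\mathrm{Add}(\omega_1,\omega_3)$. Being countably closed, this phase adds no reals, so it preserves $\CH$ and every cardinal characteristic of the continuum, in particular $\mfb = \aleph_1$; by $\GCH$ in $V$ it is $\aleph_2$-cc, hence preserves cardinals and yields an intermediate model $V_1 \models \CH \wedge 2^{\aleph_1} = \aleph_3 \wedge \mfb = \aleph_1$. It remains to push $\mfc$ up to $\aleph_2$ while (a) keeping $\mfb = \aleph_1$ and $2^{\aleph_1} = \aleph_3$, and (b) arranging $\mathsf{U}_{\aleph_1}(\mathbb R)$. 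By Theorem \ref{Cohen1} the na\"ive choice of adding Cohen (or random) reals is \emph{excluded}, since it destroys $\mathsf{U}$; this is exactly where the contrast between the two weakenings is exploited.

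To add the reals I would use a proper, $\omega^\omega$-bounding forcing over $V_1$, for instance a countable support iteration of Sacks forcing of length $\omega_2$ (with the $Z$-construction interleaved, see below). Such an iteration gives $\mfc = \aleph_2$, is $\aleph_2$-cc over $V_1$, and hence preserves $\aleph_2, \aleph_3$ and keeps $2^{\aleph_1} = \aleph_3$ (a nice name for a subset of $\omega_1$ uses $\aleph_1$ many antichains of size $\le \aleph_1$, so there are only $\aleph_3$ of them). Since $\omega^\omega$-boundedness is preserved under countable support iteration, the reals of $V_1$ remain dominating, so $\mfd = \mfb = \aleph_1$ in the extension; this is the easy half of the theorem.

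The heart of the argument is producing the universal $Z$. A counting shows there are $\aleph_2^{\aleph_1} = 2^{\aleph_1} = \aleph_3$ subspaces of $\mathbb R$ of size $\aleph_1$ to be absorbed into a single $Z$ of size $\aleph_1$, so $Z$ cannot be assembled by bookkeeping through the subspaces one at a time along an $\omega_2$-iteration. Two facts make a construction possible nonetheless. First, by the $\aleph_2$-cc together with $\mathrm{cf}(\omega_2) > \aleph_1$, every size-$\aleph_1$ subset of $\mathbb R$ in the final model already belongs to some intermediate model $V_\beta$ with $\beta < \omega_2$, since its nice name is supported below $\omega_2$. Second, I would force $Z$ to be a \emph{Jónsson (universal--homogeneous)} object for the class of size-$\le \aleph_1$ subspaces of $\mathbb R$ under topological embedding, building it cofinally through the iteration so that at every stage the current approximation realizes all topological ``types'' over its countable subsets. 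Any particular subspace $A$, being available by some $V_\beta$, is then absorbed into $Z$ at a later stage, and universality in the final model follows from homogeneity by a back-and-forth argument rather than from an enumeration. Working \emph{inside} the iteration also keeps each $Z$-step under $\CH$ (which holds in every $V_\alpha$, $\alpha < \omega_2$), so the countably closed $Z$-steps are locally $\aleph_2$-cc and do not injure $2^{\aleph_1} = \aleph_3$; being countably closed they are $\omega^\omega$-bounding, so $\mfb = \aleph_1$ survives unharmed.

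The main obstacle is precisely this universality step. One must isolate the correct notion of ``type'' for topological embeddings between subsets of $\mathbb R$ (equivalently, after transferring to $\cantor$ via the analogue of Lemma \ref{BA-equivs}, between sets of branches of a tree), verify that the resulting class carries enough amalgamation and joint embedding for the Jónsson/back-and-forth machinery to run, and prove that the generic $Z$ is simultaneously $\aleph_1$-dense, universal--homogeneous, and $\omega^\omega$-bounding. Reconciling ``build $Z$ after the reals'' with ``keep the good chain condition'' is the delicate point, and is what forces the interleaved construction over a straightforward final countably closed step. Once $\mathsf{U}_{\aleph_1}(\mathbb R)$ is secured together with $\mfc = \aleph_2$, $\mfb = \aleph_1$ and $2^{\aleph_1} = \aleph_3$, the resulting model witnesses that $\mathsf{U}_{\aleph_1}(\mathbb R)$ implies neither $\mfb > \aleph_1$ nor $2^{\aleph_1} = 2^{\aleph_0}$, and hence is strictly weaker than $\BA^-_{\aleph_1}(\mathbb R)$.
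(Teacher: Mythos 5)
Your cardinal-arithmetic scaffolding is fine (the initial $\mathrm{Add}(\omega_1,\omega_3)$ phase is essentially what the paper does, and the preservation claims for $\mfb=\aleph_1$ and $2^{\aleph_1}=\aleph_3$ are routine), and you correctly identify both the counting obstruction and the fact that every $\aleph_1$-sized set of reals lives in an intermediate model. But the heart of the theorem --- actually producing the universal $Z$ --- is exactly the step you leave as an unverified ``J\'onsson/universal--homogeneous'' construction, and this is a genuine gap rather than a deferred detail. For that machinery to yield universality you would need amalgamation and joint embedding, over countable subspaces, for the class of subspaces of $\mathbb R$ (equivalently of $2^\omega$) of size $\le\aleph_1$ under topological embedding. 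Nothing of the sort is available: Theorem \ref{Cohen1} shows $\mathsf{U}_{\aleph_1}$ fails outright in the Cohen and random models, so no $\ZFC$-style Fra\"iss\'e argument can produce $Z$ --- the work must be done by the iterands, and you have not specified any iterand that does it. Choosing a Sacks iteration makes matters worse rather than better: whether $\mathsf{U}_{\aleph_1}$ holds in the Sacks model is explicitly listed as an open question in the final section of the paper.

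The paper's route avoids the J\'onsson machinery entirely. One does not absorb each of the $\aleph_3$-many $\aleph_1$-sized sets into $Z$ individually; it suffices to handle the $\aleph_2$-many sets $\mathbb R\cap V_\beta$ ($\beta<\omega_2$), each of size $\aleph_1$ because $\CH$ holds at intermediate stages, and to force each of them to be homeomorphic to a fixed $Z$: then any $\aleph_1$-sized $C$ is contained in some $\mathbb R\cap V_\beta\cong Z$ and so embeds into $Z$. This is the mechanism of Theorem \ref{nocardarith}, carried out there with Medini's $\sigma$-centered forcing of Definition \ref{medinidef}; since it is open whether those posets add dominating reals, to also secure $\mfb=\aleph_1$ the paper instead invokes Shelah's oracle-cc iteration (\cite[Theorem 4.7]{independenceresults}, cf.\ Theorem \ref{shelah}) over a ground model with $\diamondsuit$ in which $2^{\aleph_1}=\aleph_3$ has already been arranged: that iteration makes the relevant nowhere meager $\aleph_1$-dense linear orders isomorphic to $Z$ while keeping $\non(\Me)=\aleph_1$, whence $\mfb\le\non(\Me)=\aleph_1$, and since $\non(\Me)=\aleph_1$ every $\aleph_1$-sized set extends to a nowhere meager $\aleph_1$-dense one and hence embeds into $Z$. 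This is Theorem \ref{noconsequencesofU2}. If you want to salvage your outline, the missing ingredient is precisely a concrete iterand that maps the reals of an intermediate model into $Z$ while preserving $\omega^\omega$-bounding (or the non-meagerness of the old reals); that is what Shelah's construction supplies and what your sketch does not.
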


A corollary of this is the following.
\begin{corollary}
    $\mathsf{U}_{\aleph_1}(\mathbb R)$ does not imply $\BA^-_{\aleph_1}(\mathbb R)$. 
\end{corollary}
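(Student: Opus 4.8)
The plan is to read off the non-implication directly from the two theorems already established, using the separating model provided by Theorem \ref{noconsequencesofU1}. Recall that in that model $\mathsf{U}_{\aleph_1}(\mathbb R)$ holds while simultaneously $\mfb = \aleph_1$ and $2^{\aleph_1} = \aleph_3 \neq \aleph_2 = 2^{\aleph_0}$. The first step is to argue that $\BA^-_{\aleph_1}(\mathbb R)$ must fail in this model. Indeed, by the first theorem of the introduction, $\BA^-_{\aleph_1}(\mathbb R)$ entails both $\mfb > \aleph_1$ (Medini) and $2^{\aleph_1} = 2^{\aleph_0}$ (Lemma \ref{BA-impliesCA}). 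Each of these is violated in the model of Theorem \ref{noconsequencesofU1}, so $\BA^-_{\aleph_1}(\mathbb R)$ cannot hold there.

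Putting this together, the separating model satisfies $\mathsf{U}_{\aleph_1}(\mathbb R) \wedge \neg \BA^-_{\aleph_1}(\mathbb R)$, which is exactly a model witnessing that $\mathsf{U}_{\aleph_1}(\mathbb R)$ does not imply $\BA^-_{\aleph_1}(\mathbb R)$ over $\ZFC$. This is the entire content of the corollary.

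There is no genuine obstacle at this stage: all of the difficulty lives in the construction underlying Theorem \ref{noconsequencesofU1}, and the corollary is a purely formal consequence of combining it with the implications recorded in the first theorem. The only point worth flagging is a redundancy in the argument, namely that the model refutes $\BA^-_{\aleph_1}(\mathbb R)$ in two independent ways, since both of the listed consequences ($\mfb > \aleph_1$ and $2^{\aleph_1} = 2^{\aleph_0}$) fail there; either failure alone already suffices to establish the non-implication.
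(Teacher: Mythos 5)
Your argument is correct and is exactly the paper's intended derivation: the corollary is stated as an immediate consequence of Theorem \ref{noconsequencesofU1}, whose model satisfies $\mathsf{U}_{\aleph_1}(\mathbb R)$ while violating both consequences of $\BA^-_{\aleph_1}(\mathbb R)$ recorded in the introduction's first theorem (namely $\mfb > \aleph_1$ and $2^{\aleph_0} = 2^{\aleph_1}$). As you note, either violation alone suffices, so there is nothing to add.
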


The proof of Theorem \ref{Cohen1} gives significantly more information and suggests a class of pairs of spaces which are ``very far" from being homeomorphic. We finish the paper by studying three variations of this class which we dub ``avoiding", ``strongly avoiding", and ``totally avoiding". We show that all three of these notions are preserved by finite support iterations of ccc forcing notions. This allows us to prove a general ``no step up theorem". A simple version of this is the following.
\begin{theorem}
    It is consistent that $\BA$ holds, $2^{\aleph_0} = \aleph_3$ and $\mathsf{U}_{\aleph_2} (\mathbb R)$ fails. In fact in this model if $Z \subseteq \mathbb R$ has size $\aleph_2$ then there is some $Y_Z \subseteq \mathbb R$ also of size $\aleph_2$ so that every continuous $f:Y_Z \to Z$ has countable range. \label{nostepupintro}
\end{theorem}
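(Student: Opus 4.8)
The plan is to realize all three demands in a single finite support ccc iteration $\langle \mathbb{P}_\alpha, \dot{\mathbb{Q}}_\alpha : \alpha < \omega_3 \rangle$ of length $\omega_3$, starting from a ground model of $\GCH$, using a bookkeeping function to interleave two kinds of steps. The first kind are the usual Baumgartner steps: at stage $\alpha$ the bookkeeping hands us a pair of $\aleph_1$-dense subsets of $\mathbb{R}$ lying in the intermediate model $V^{\mathbb{P}_\alpha}$, and we force with the ccc poset that adjoins an autohomeomorphism of $\mathbb{R}$ carrying one onto the other. The second kind are the avoiding steps: at stage $\alpha$ the bookkeeping hands us a set $Z \in V^{\mathbb{P}_\alpha}$ of size $\aleph_2$, and we force with the ccc poset isolated in the analysis behind Theorem~\ref{Cohen1} (morally, adding $\aleph_2$ fresh Cohen reals) which adjoins a set $Y_Z$ of size $\aleph_2$ making the pair $(Y_Z, Z)$ \emph{totally avoiding}, i.e. generically arranging that every continuous $f : Y_Z \to Z$ has countable range.

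Since the iteration is ccc of length $\omega_3$ with finite support over a $\GCH$ model, a standard nice-name count yields $2^{\aleph_0} = \aleph_3$ in the extension, and as $\BA$ will hold we automatically get $2^{\aleph_1} = 2^{\aleph_0} = \aleph_3$ by the theorem that $\BA^-_{\aleph_1}(\mathbb{R})$ implies $2^{\aleph_1} = 2^{\aleph_0}$. Because $\mathrm{cf}(\omega_3) = \omega_3 > \omega_2$, the standard support-reflection argument for ccc finite support iterations shows that every $\aleph_1$-dense set and every size-$\aleph_2$ set of the final model already appears in some $V^{\mathbb{P}_\alpha}$ with $\alpha < \omega_3$; hence a bookkeeping function cycling through all such objects (each handled cofinally often) catches every pair of $\aleph_1$-dense sets, yielding $\BA$, and equips every size-$\aleph_2$ set $Z$ with a totally avoiding partner $Y_Z$. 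Finally, $\mathsf{U}_{\aleph_2}(\mathbb{R})$ fails because any embedding of $Y_Z$ into $Z$ would in particular be a continuous injection $Y_Z \to Z$ with range of size $\aleph_2$, contradicting total avoidance; thus no $\aleph_2$-dense $Z$ can be universal.

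The crux of the argument — the reason this interleaving does not collapse under its own weight — is the preservation theorem for totally avoiding pairs: because total avoidance is preserved by finite support iterations of ccc forcings, the property of $(Y_Z, Z)$ established at stage $\alpha$ survives all remaining stages, in particular the homogenizing Baumgartner steps that come later. This is where I expect the real work to lie. A priori a subsequent step — say an autohomeomorphism of $\mathbb{R}$ added to witness $\BA$ — could manufacture a new continuous $f : Y_Z \to Z$ with uncountable range and destroy the avoidance, and the preservation theorem is exactly what rules this out; its proof, that no ccc finite support iteration adds such an $f$, is the main obstacle. A secondary point requiring care is the compatibility of the two bookkeeping tasks: one must verify that isomorphizing the $2^{\aleph_1} = \aleph_3$ many $\aleph_1$-dense pairs and supplying avoiding partners for the $\aleph_3$ many size-$\aleph_2$ sets can be scheduled together, so that the avoiding steps never produce $\aleph_1$-dense sets that the $\BA$ bookkeeping fails to revisit — which is handled by the usual catch-up argument over the length-$\omega_3$ iteration.
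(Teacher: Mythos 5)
Your proposal is correct and follows essentially the same route as the paper: a finite support ccc iteration of length $\omega_3$ over a $\GCH$ model, interleaving Baumgartner-homogenization steps with steps adding $\aleph_2$ fresh Cohen reals, with the failure of $\mathsf{U}_{\aleph_2}(\mathbb R)$ secured by the fact that Cohen reals totally avoid the reals of the intermediate model and by the preservation of total avoidance under finite support ccc iterations (Theorem \ref{iteration}). The only point you gloss over that the paper flags explicitly is that for $\mathbb R$ (as opposed to $2^\omega$, where Medini's poset is unconditionally $\sigma$-centered) the finite-condition forcing making two $\aleph_1$-dense sets order-isomorphic is not automatically ccc, so some preparation of the ground model \`{a} la \cite[Theorem 9.2]{ARS85} is needed to run the $\BA$ bookkeeping with large continuum.
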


In the statement of the theorem above the property described of $Z$ and $Y_Z$ is ``$Y_Z$ totally avoids $Z$". A more general statement can be found as Theorem \ref{nostepup} below. 

The rest of this paper is organized as follows. In the remainder of this section we recall some preliminaries we will need. The following section defines the weakenings we will study and makes some easy observations. In Section 3 we look at the first of these weakenings and show that while it is indeed different from $\BA$ it has many of the same applications. Meanwhile Section 4 shows that an even further weakening fails to have any of these applications but still may fail badly. Finally in Section 5 we exploit this bad failure to discuss the possible behavior of homeomorphism types of subsets of Polish spaces via the ``avoiding" notions described above and prove the above advertised iteration theorem for ccc forcing notions in this context which we then put to use in deriving Theorem \ref{nostepupintro} above. We also make some general observations about these ideas. The paper then concludes with a list of open problems.
\medskip

\noindent {\bf Acknowledgements.} I would like to thank Andrea Medini, Juris Stepr\=ans and Lyubomyr Zdomskyy for several extremely helpful and enlightening conversations about the material contained in this paper. I would particularly like to thank Andrea Medini for also allowing me to include Theorem \ref{medini}.

\subsection{Preliminaries}

Unless otherwise stated all topological spaces we consider in this paper are separable and metrizable. All of our set theoretic notation is standard and the reader is referred to the monographs \cite{KenST} and \cite{JechST} for more. For descriptive set theory we have followed \cite{Kechrisbook}, which we refer the reader to for more background on this area. For details about cardinal characteristics we also suggest \cite{BarJu95} and the handbook article \cite{BlassHB}. We only remark that if $A$ and $B$ are sets, $C \subseteq A$ and $h:A \to B$ is a function then $h``C$ denotes the set $\{h(c) \; | \; c \in C\}$, which is sometimes in the literature also written as $h[C]$.

A {\em Polish space} is a topological space whose topology is compatible with a separable, complete metric. Such a space is {\em perfect} if it has no isolated points. An important fact we will use often is that if $P$ is a perfect Polish space then every non-empty open $U \subseteq P$ contains a (closed) copy of the Cantor space $2^\omega$. Spaces with this latter property are referred to as {\em Cantor crowded}. We need the following facts, due to Kuratowski and Lavrentiev respectively.

\begin{fact} \label{Lavthm}
    Let $X$ and $Y$ be Polish spaces and $A \subseteq X$. Let $f:A \to Y$ be continuous. 
    \begin{enumerate}
        \item (Kuratowski, see Theorem 3.8 of \cite{Kechrisbook}) There is a $G_\delta$ subset $W \subseteq X$ so that $A \subseteq W$ and a continuous function $\hat{f}: W \to Y$ so that $\hat{f} \supseteq f$.
        \item (Lavrentiev, see Theorem 3.9 of \cite{Kechrisbook}) If $f$ is a homeomorphism, then there are $G \subseteq X$ and $H \subseteq Y$ which are $G_\delta$ subsets of $X$ and $Y$ respectively so that $A \subseteq G$ and ${\rm im}(f) \subseteq H$ and $\hat{f}:G \to H$ is a homeomorphism extending $f$. In particular if $f$ is a homeomorphism onto its image then $\hat{f}$ as in part (1) will be as well.
    \end{enumerate}
    
\end{fact}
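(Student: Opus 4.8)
The plan is to prove both parts via the classical \emph{oscillation} method, deriving Lavrentiev's theorem (2) from Kuratowski's theorem (1).

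For part (1), fix a complete metric $d$ on $Y$. I would define, for each point $x$ in the closure $\overline{A}$ of $A$ in $X$, the oscillation of $f$ at $x$ by
$$\mathrm{osc}_f(x) = \inf\{\,\mathrm{diam}_d(f``(A \cap U)) : U \text{ an open neighborhood of } x\,\}.$$
First I would observe that for each $\varepsilon > 0$ the set $\{x \in \overline{A} : \mathrm{osc}_f(x) < \varepsilon\}$ is relatively open in $\overline{A}$ (any $U$ witnessing $\mathrm{osc}_f(x) < \varepsilon$ witnesses the same for all $x' \in U \cap \overline{A}$), so that $W := \{x \in \overline{A} : \mathrm{osc}_f(x) = 0\}$ is a countable intersection of relatively open sets, hence a $G_\delta$ subset of $\overline{A}$, and therefore a $G_\delta$ subset of $X$ since $\overline{A}$ is closed in the metrizable space $X$. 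Continuity of $f$ on $A$ gives $A \subseteq W$. To define $\hat{f}$ on $W$, I would fix $x \in W$ and a sequence $a_n \in A$ with $a_n \to x$; the condition $\mathrm{osc}_f(x) = 0$ forces $(f(a_n))_n$ to be $d$-Cauchy with limit independent of the chosen sequence, so by completeness of $Y$ one sets $\hat{f}(x)$ equal to that limit. The same oscillation bound yields continuity of $\hat{f}$, and $\hat{f} \supseteq f$ is immediate.

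For part (2), I would apply (1) twice: to $f$, obtaining a continuous $g : W_1 \to Y$ with $A \subseteq W_1$ and $W_1$ a $G_\delta$ in $X$; and to $f^{-1} : {\rm im}(f) \to X$, obtaining a continuous $h : W_2 \to X$ with ${\rm im}(f) \subseteq W_2$ and $W_2$ a $G_\delta$ in $Y$. The idea is then to carve out of $W_1$ and $W_2$ the precise sets on which $g$ and $h$ witness one another as inverses. Concretely I would set
$$G = \{x \in W_1 : g(x) \in W_2 \text{ and } h(g(x)) = x\}, \qquad H = \{y \in W_2 : h(y) \in W_1 \text{ and } g(h(y)) = y\}.$$
Since $g$ extends $f$ and $h$ extends $f^{-1}$, a direct check gives $A \subseteq G$ and ${\rm im}(f) \subseteq H$. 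One then verifies that $g$ maps $G$ bijectively onto $H$ with inverse $h \hook H$: if $x \in G$ then $g(x) \in W_2$, $h(g(x)) = x \in W_1$, and $g(h(g(x))) = g(x)$, so $g(x) \in H$, and symmetrically for $h$. As $g$ and $h$ are continuous and mutually inverse on $G$ and $H$, the map $g \hook G : G \to H$ is the desired homeomorphism extending $f$.

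The bookkeeping I expect to require the most care is checking that $G$ and $H$ are genuinely $G_\delta$. For this I would note that $g^{-1}(W_2)$ is $G_\delta$ in $X$, being the preimage of a $G_\delta$ under a continuous map defined on the $G_\delta$ set $W_1$, while $\{x \in g^{-1}(W_2) : h(g(x)) = x\}$ is the coincidence set of two continuous maps into $X$, hence relatively closed and therefore $G_\delta$; since the intersection of two $G_\delta$ sets is $G_\delta$, this gives the claim for $G$, and symmetrically for $H$. The only genuinely delicate analytic point is the well-definedness of $\hat{f}$ in part (1), which is precisely where the vanishing of the oscillation together with completeness of $Y$ is essential; everything in part (2) is then formal manipulation of continuous extensions.
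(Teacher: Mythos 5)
Your proposal is correct and is exactly the standard oscillation argument for Kuratowski's theorem followed by the double-application-plus-coincidence-set derivation of Lavrentiev's theorem; the paper does not prove this Fact itself but cites Kechris, where precisely this argument appears. No gaps: the $G_\delta$ bookkeeping for $W$, $G$, and $H$ and the well-definedness of $\hat{f}$ via completeness of $Y$ are all handled correctly.
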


A proof of both parts can be found in \cite[p. 16]{Kechrisbook}. If we are considering a continuous function $f$ from some subset of a Polish space into another Polish space then any $\hat{f}$ from Fact \ref{Lavthm} is called a {\em lift} of $f$. Sometimes in a lazy abuse of notation we refer simply to $\hat{f}$ as {\em the lift}, though it may not be unique. In such cases the point is that any lift will work for whatever argument is at hand and we do not worry to distinguish between them since any two lifts agree on their common domain, which will again be a $G_\delta$, if e.g. $A$ is dense in $X$.

We will also be often referring to Polish spaces (and their Borel subsets) in the context of forcing. There are many (standard) ways to treat these, here is one for concreteness. Recall that a space if Polish if and only if it is a $G_\delta$ subset of the Hilbert cube, $\mathbb I^\omega$. If $M$ is a model of set theory then we say that a Polish space $X$ is {\em coded in} $M$ if a Borel code for a $G_\delta$ subset of $\mathbb I^\omega$ homeomorphic to $X$ is in $M$. In this case we can reinterpret $X$ in a forcing extension in the natural way. 

Finally we recall the definitions of some cardinal characteristics that will be discussed. Denote by $\Me$ the meager ideal. The cardinal $\non (\Me)$ is the least size of a non meager set and the cardinal $\cov (\Me)$ is the least size of a family of meager sets needed to cover the real line. If $f, g \in \baire$ then we write $f \leq^* g$ if for all but finitely many $k \in \omega$ we have that $f(k) \leq g(k)$. The bounding number $\mfb$ denotes the least size of a family $\mathcal F \subseteq \baire$ which is unbounded i.e. so that there is no single $f \in \baire$ with $g \leq^* f$ for all $g \in \mathcal F$ simultaneously. Dually the dominating number $\mfd$ is the least size of a dominating family - that is a family $\mathcal D \subseteq \baire$ so that every $g \in \baire$ is $\leq^*$-below some $f \in \mathcal D$. Finally recall that if $A, B \subseteq \omega$ are infinite then we write $A \subseteq^* B$ if there is a $k \in \omega$ with $A \setminus k \subseteq B$. If $\mathcal F$ is a family of infinite subsets of natural numbers we say that $\mathcal F$ has the {\em strong finite intersection property} if there for every finitely subset $\mathcal A \subseteq \mathcal F$ we have that $\bigcap \mathcal A$ is infinite. If $A \subseteq \omega$ is infinite then we say that $A$ is a {\em pseudointersection} of such a family $\mathcal F$ if $A \subseteq^* B$ for all $B \in \mathcal F$. The pseudointersection number $\mfp$ is the least size of a family with the strong finite intersection property but no pseudointersection. For the reader's convienience we recall the following well known facts, see e.g. \cite{BlassHB}.

\begin{fact}
    The follow inequalities are provable in $\ZFC$. In each case they many be strict.

    \begin{enumerate}
        \item $\mfp \leq \mfb \leq \mfd$
        \item $\mfb \leq \non (\Me)$ and $\cov (\Me) \leq \mfd$
        \item $\mfp \leq \cov(\Me)$
    \end{enumerate}
\end{fact}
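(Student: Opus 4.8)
Since this Fact only recollects standard $\ZFC$ inequalities, the plan is to indicate the familiar proofs, grouped by difficulty.

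\emph{Item (1).} For $\mfb \le \mfd$ I would first note that every dominating family is unbounded: if a dominating family $\mathcal D$ were bounded by a single $g$, then $g+1 \le^* d \le^* g$ for some $d \in \mathcal D$, a contradiction; hence any witness to $\mfd$ also witnesses $\mfb$. For $\mfp \le \mfb$ the plan is to turn an unbounded family into a witness for $\mfp$. Fix an unbounded family $\{f_\alpha : \alpha < \mfb\}$, which we may take to consist of strictly increasing functions, and work inside $\omega \times \omega \cong \omega$. Set $A_\alpha = \{(n,m) : m \ge f_\alpha(n)\}$ and $D_k = \{(n,m) : n \ge k\}$. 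Every finite intersection of these sets is infinite, so $\{A_\alpha\} \cup \{D_k\}$ has the strong finite intersection property and has size $\mfb$. The point is that it has no pseudointersection: from a putative pseudointersection $P$ the sets $D_k$ force each column to meet $P$ finitely and the first coordinates along $P$ to tend to infinity, so that $h(n) := \max\{m : (n^*,m) \in P\}$, where $n^*$ is the least element of the infinite projection $\pi(P)$ with $n^* \ge n$, is total; using $P \subseteq^* A_\alpha$ together with the monotonicity of $f_\alpha$ one checks $f_\alpha \le^* h$ for every $\alpha$, contradicting unboundedness. Thus the family witnesses $\mfp \le \mfb$.

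\emph{Item (2).} Both inequalities rest on a single observation: for fixed $g \in \baire$ the set $M_g := \{f : f \le^* g\}$ is meager, being the countable union of the closed nowhere dense sets $\{f : f(k) \le g(k) \text{ for all } k \ge n\}$. For $\mfb \le \non(\Me)$, take $X$ non-meager of size $\non(\Me)$; were $X$ bounded by some $g$ it would lie in the meager set $M_g$, so $X$ is unbounded and $|X| \ge \mfb$. For $\cov(\Me) \le \mfd$, take a dominating family $\mathcal D$ of size $\mfd$; since every $f$ lies in some $M_g$ with $g \in \mathcal D$, the family $\{M_g : g \in \mathcal D\}$ is a meager cover, so $\cov(\Me) \le \mfd$. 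I would add the remark that although $\non(\Me)$ and $\cov(\Me)$ are phrased for $\mathbb R$, these invariants are computed identically in any perfect Polish space (e.g. via the comeager copy of $\baire$ in $\mathbb R$ given by the irrationals), which is what licenses working in $\baire$.

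\emph{Item (3).} This is the step I expect to require genuine work, since $\cov(\Me)$ and $\non(\Me)$ are dual and the bound does not follow formally from (1) and (2). The plan is to show that fewer than $\mfp$ meager sets cannot cover $\cantor$, using Bartoszy\'nski's combinatorial characterization (see \cite{BarJu95}): every meager set is contained in one of the form $\{x : x \hook I_n \ne y \hook I_n \text{ for all large } n\}$ for some interval partition $\{I_n\}$ of $\omega$ and some $y \in \cantor$. Given $< \mfp$ such sets, the task is to produce a single $x$ agreeing with each $y^\alpha$ on $I^\alpha_n$ for infinitely many $n$, whence $x$ lies outside every $M_\alpha$. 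Equivalently, and more efficiently, I would invoke Bell's theorem identifying $\mfp$ with the least cardinal for which $\MA$ for $\sigma$-centered posets fails and apply it to the $\sigma$-centered Cohen poset $2^{<\omega}$, whose dense sets $\{t : [t]$ misses the $\alpha$-th nowhere dense set$\}$ a generic filter then meets, yielding the desired real. The delicate point, and the main obstacle, is precisely the packaging underlying Bell's theorem: one must organize the ``agree with $y^\alpha$ beyond stage $k$'' requirements into a family with the strong finite intersection property over the centering pieces so that finite subfamilies genuinely intersect infinitely. Once that bookkeeping is in place the resulting pseudointersection produces a point avoiding every $M_\alpha$, giving $\cov(\Me) \ge \mfp$.
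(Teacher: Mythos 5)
Your proofs of the inequalities are correct, but note that the paper offers no proof of this Fact at all: it is stated as background with a pointer to \cite{BlassHB}, so the real comparison is with the standard literature. Items (1) and (2) are self-contained and check out; your direct construction witnessing $\mfp \leq \mfb$ (the sets $A_\alpha = \{(n,m) : m \geq f_\alpha(n)\}$ together with the tails $D_k$ inside $\omega \times \omega$) is sound -- the verification that $P \subseteq^* D_k$ makes each column finite and pushes first coordinates to infinity, after which monotonicity of the $f_\alpha$ gives $f_\alpha \leq^* h$, is exactly right -- and it is mildly more elementary than the usual factorization $\mfp \leq \mathfrak{t} \leq \mfb$ found in \cite{BlassHB}. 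For item (3) you correctly isolate $\mfp \leq \cov(\Me)$ as the only genuinely nontrivial step, and invoking Bell's theorem for the poset $2^{<\omega}$ is legitimate and matches how the cited reference proceeds; I would only point out that since $2^{<\omega}$ is countable you do not need the full strength of Bell's theorem ($\mfp = \mathfrak{m}(\sigma\text{-centered})$): the Bartoszy\'nski-style gluing you sketch, carried out on the countable set of pairs (finite interval, finite partial function) with agreement blocks for different $\alpha$'s chosen successively disjoint, gives $\mfp \leq \mathfrak{m}(\text{countable}) = \cov(\Me)$ directly. Two pieces of bookkeeping you elide should be recorded: take the nowhere dense sets closed and split each meager set into countably many of them (still fewer than $\mfp$ in total), and add the countably many dense sets $\{t : |t| \geq n\}$ so the generic filter yields a total element of $\cantor$. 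Finally, you do not address the Fact's second sentence, that each inequality may consistently be strict; this is again citation-level (e.g.\ the Cohen model gives $\mfp < \cov(\Me)$ and $\mfb < \mfd$, the random model gives $\mfb < \non(\Me)$, and the Laver model gives $\mfp < \mfb$ and $\cov(\Me) < \mfd$), but a complete write-up should at least say so.
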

 
\section{Some Weakenings of $\BA$}

In this section we introduce two families of weakenings of $\BA_\kappa (X)$ and make some initial observations. The initial axioms were suggested by Andrea Medini and their parametrized versions appeared in the proofs given in sections 3-5 of this paper in an effort to answer basic questions about them. First we recall a standard definition from topology.

\begin{definition}
    Let $X$ be a (non-empty) topological space and $\kappa$ an infinite cardinal. We say that $X$ is $\kappa$-{\em crowded} if every non-empty open set has size $\kappa$.
\end{definition}

For example any perfect Polish space is $2^{\aleph_0}$-crowded. 

Recall that a topological space is {\em zero dimensional} if it has a basis of clopen sets. We will use the following two facts about zero dimensional spaces frequently.
\begin{fact}[See Theorem 7.8 of \cite{Kechrisbook} and its proof]
    Let $Y$ be zero-dimensional\footnote{Again, throughout we are assuming that $Y$ such as this are separable and metrizable. We won't remark on this again but many of the statements, including this one, rely on this assumption.} and assume $\kappa \leq 2^{\aleph_0}$. If $Y$ is $\kappa$-crowded then $Y$ is homeomorphic to a $\kappa$-dense subset of $2^\omega$. If $\kappa < \cc$ or $Y$ is nowhere locally compact then $Y$ is also homeomorphic to a $\kappa$-dense subset of $\omega^\omega$.\label{embedintocantorspace}
\end{fact}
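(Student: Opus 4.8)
The plan is to realize $Y$ directly as a $\kappa$-dense subspace of $2^\omega$ (respectively $\omega^\omega$) by building a clopen \emph{Cantor scheme} on $Y$ and reading off the induced coding map. The one fact that makes everything run is the following observation about $\kappa$-crowdedness: if $V \subseteq Y$ is nonempty and clopen, then $V$ is again $\kappa$-crowded and hence of size exactly $\kappa$, since every nonempty relatively open subset of $V$ is nonempty open in $Y$. In particular, as $\kappa \geq \aleph_0$, the space $Y$ has no isolated points, so every nonempty clopen $V$ has at least two points and therefore splits into two nonempty clopen pieces.

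For the $2^\omega$ case I would construct a family $(Y_s)_{s \in 2^{<\omega}}$ of nonempty clopen subsets of $Y$ with $Y_\emptyset = Y$ and $Y_s = Y_{s^\frown 0} \sqcup Y_{s^\frown 1}$ for all $s$, arranged so that $\{Y_s : s \in 2^{<\omega}\}$ is a basis for $Y$ (i.e.\ for each $y$ the sets $Y_s$ containing $y$ form a neighborhood basis). Granting this, the map $e : Y \to 2^\omega$ sending $y$ to the unique branch $x$ with $y \in Y_{x \restriction n}$ for all $n$ is well defined (each level partitions $Y$), continuous (since $e^{-1}([s]) = Y_s$ is clopen, writing $[s] = \{x : s \subseteq x\}$), injective, and open onto its image (since the $Y_s$ form a basis and $e(Y_s) = e(Y) \cap [s]$). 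Finally $e(Y)$ is $\kappa$-dense: every nonempty open $U \subseteq 2^\omega$ contains some $[s]$, and $e(Y) \cap [s] = e(Y_s)$ has size $|Y_s| = \kappa$ by the observation above, while $|e(Y)| = |Y| = \kappa$ since $Y$ is separable and $\kappa$-crowded.

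For the $\omega^\omega$ case the same strategy works with an $\omega$-branching scheme $(Y_s)_{s \in \omega^{<\omega}}$, but now each $Y_s$ must be split into \emph{infinitely many} nonempty clopen pieces $(Y_{s^\frown i})_{i \in \omega}$, so that the coding map lands in $\omega^\omega$ and $e(Y) \cap [s] = e(Y_s)$ remains nonempty (indeed of size $\kappa$) for every $s \in \omega^{<\omega}$. A nonempty clopen set admits a partition into infinitely many nonempty clopen pieces exactly when it is noncompact (a clopen partition of a compact space is finite), so the construction goes through precisely when every nonempty clopen subset of $Y$ is noncompact, i.e.\ when $Y$ is nowhere locally compact. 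This is exactly one of the hypotheses; and it is automatic in the other case $\kappa < \cc$, since a nonempty compact clopen $V \subseteq Y$ would be crowded and compact, hence of size at least $\cc$, contradicting $|V| = \kappa < \cc$. (Nowhere local compactness is in fact also necessary, as a compact clopen piece would map to a relatively clopen compact, hence nowhere dense, subset of $\omega^\omega$, which cannot be relatively open in a dense set.)

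The genuinely technical point, and the one I expect to require the most care, is arranging simultaneously that the scheme is a neighborhood basis (so that $e$ is an embedding rather than merely a continuous bijection) and that every piece $Y_s$ stays nonempty and $\kappa$-crowded. I would handle this with a standard bookkeeping over a fixed countable clopen basis $\{B_k : k \in \omega\}$ of $Y$: at each node one splits the current piece so as to eventually \emph{decide} every $B_k$ along every branch (which forces the $Y_s$ to be a basis, and in particular to have vanishing diameter), always choosing the split so as to leave both, respectively all, children nonempty, which is possible because each piece is crowded and, in the $\omega^\omega$ case, noncompact. This interleaving of ``decide the next basis set'' with ``keep all children nonempty'' is routine but is where the argument must be written with care; it is essentially the content of the proof of Theorem 7.8 of \cite{Kechrisbook}, adapted to track the cardinality condition $|Y_s| = \kappa$.
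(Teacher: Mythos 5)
Your proposal is correct and is exactly the argument the paper has in mind: the paper gives no proof of its own, only the pointer to the clopen-scheme construction in the proof of Theorem 7.8 of \cite{Kechrisbook}, and your adaptation of that construction is precisely the intended one. In particular, tracking that every nonempty clopen piece is again $\kappa$-crowded (hence of size $\kappa$), and using nowhere local compactness --- automatic when $\kappa < 2^{\aleph_0}$ since a nonempty crowded compact clopen set would have size $2^{\aleph_0}$ --- to obtain the infinite clopen splittings needed for the $\omega^\omega$ case, supplies exactly the extra bookkeeping the citation glosses over.
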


\begin{fact}(See \cite[Corollary 6.2.8]{Engleking})
    If $\kappa < 2^{\aleph_0}$ and $Z$ is of size $\kappa$ then $Z$ is zero dimensional.
\end{fact}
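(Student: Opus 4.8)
The plan is to use the cardinality hypothesis directly against a chosen compatible metric, exploiting the fact that in a metric space an open ball is clopen precisely when the corresponding sphere is empty. Since $Z$ is (separable and) metrizable, fix a compatible metric $d$ on $Z$. For each point $x \in Z$ the distance function $y \mapsto d(x,y)$ has range of size at most $|Z| = \kappa$, so the set
\[
R_x = \{ r > 0 : \exists y \in Z \; \; d(x,y) = r \}
\]
of ``realized radii'' at $x$ has size at most $\kappa < 2^{\aleph_0}$.

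First I would observe that for any $\epsilon > 0$ the interval $(0, \epsilon)$ has size $2^{\aleph_0}$, so it cannot be contained in $R_x$; hence there is some $r < \epsilon$ with $r \notin R_x$, i.e. with the sphere $S(x,r) = \{ y \in Z : d(x,y) = r \}$ empty. For such an $r$ the open ball $B(x,r) = \{ y : d(x,y) < r \}$ coincides with the closed ball $\{ y : d(x,y) \le r \}$, while its complement $\{ y : d(x,y) > r \}$ is also open; thus $B(x,r)$ is clopen.

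Finally I would note that as $x$ ranges over $Z$ and $\epsilon$ over positive reals, the clopen balls $B(x,r)$ produced above furnish, around each point, a clopen neighborhood of diameter $< 2\epsilon$. These therefore form a neighborhood basis at every point and hence a basis for the topology of $Z$ consisting of clopen sets, so $Z$ is zero dimensional. I do not expect any real obstacle here beyond the routine verification that an open ball with empty bounding sphere is closed; the inequality $\kappa < 2^{\aleph_0}$ does all the essential work, guaranteeing at every point and every scale a radius whose sphere misses all of $Z$.
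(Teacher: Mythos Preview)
Your argument is correct: choosing, at each point and each scale, a radius not realized by the distance function yields clopen balls, and these give a clopen local base. Note that the paper does not actually supply a proof of this fact at all; it is simply stated with a reference to Engelking, so there is no proof in the paper to compare with. Your direct metric argument is the standard elementary one and is entirely adequate for the paper's standing assumption that all spaces are separable and metrizable.
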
 


Here are the axioms we will be considering in this paper.
\begin{definition}[Medini]
    Let $\kappa \leq 2^{\aleph_0}$ a cardinal. 

    \begin{enumerate}
        \item The axiom $\BA^-_\kappa$ states that every pair of zero-dimensional, $\kappa$-crowded spaces are homeomorphic. Denote by $\BA^-$ the statement $\BA^-_{\aleph_1}$.

        \item The axiom $\mathsf{U}_\kappa$ states that there is a zero dimensional space $Z$ into which any other zero dimensional space of size $\kappa$ homeomorphically embeds.
    \end{enumerate}
\end{definition}

In the definition of $\mathsf{U}_\kappa$ we call a $Z$ as described above ``universal" (for $\kappa$-sized spaces). To be clear, we mean by $Y$ embeds into $Z$ that there is a continuous injection $f:Y \to Z$ so that the image of $f$ is  homeomorphic to $Y$. Clearly for any $\kappa$ we have that $\BA^-_\kappa$ implies $\mathsf{U}_\kappa$ since the unique (up to homeomorphism) $\kappa$-crowded zero dimensional, (separable, metrizable) space is universal. 

We will also consider parametrized versions of these principles.

\begin{definition}
    Let $X$ be a perfect Polish space and $\kappa \leq \lambda \leq 2^{\aleph_0}$ a cardinal. 

    \begin{enumerate}
        \item The axiom $\BA^-_\kappa (X)$ states that all $\kappa$-dense subsets of $X$ are homeomorphic. Denote by $\BA^- (X)$ the statement $\BA^-_{\aleph_1} (X)$.

        \item The axiom $\mathsf{U}_{\kappa, \lambda} (X)$ states that there is a subset $Z \subseteq X$ which has size $\lambda$ so that for every $W \subseteq X$ of size $\kappa$ there is an embedding $f:W \to Z$. Denote by $\mathsf{U}_{\kappa}(X)$ the statement $\mathsf{U}_{\kappa, \kappa} (X)$. 
    \end{enumerate}
\end{definition}

We note that in the definition $\mathsf{U}_{\kappa, \lambda} (X)$ again that ``embedding" means homeomorphism of its image. In what follows we could have weakened the requirement in $\mathsf{U}_\kappa$ that the function $f$ be a homeomorphic embedding to simply injection as frequently in showing the failure of $\mathsf{U}_\kappa$ in certain models we often show the stronger statement that for every $Z$ there is some $Y$ which does not continuous inject into $Z$. This was simply a choice of taste. As in the non parametrized versions it's clear that for any $\kappa$ and $X$ we have that $\BA^-_\kappa(X)$ implies $\mathsf{U}_{\kappa} (X)$ and $\mathsf{U}_{\kappa, \lambda}(X)$ implies $\mathsf{U}_{\kappa, \lambda '}(X)$ whenever $\lambda < \lambda '$. 

It turns out in the case of $\BA^-$ the parametrizations are superfluous.

\begin{lemma}
    Let $\kappa$ be a cardinal. The following are equivalent:
    \begin{enumerate}
        \item $\BA^-_\kappa$
        \item $\BA^-_\kappa (2^\omega)$
        \item $\BA^-_\kappa(\baire)$
        \item $\BA^-_\kappa (X)$ holds for some perfect Polish space $X$
        \item $\BA^-_\kappa (X)$ holds for every perfect Polish space $X$. 
    \end{enumerate}
    \label{BA-equivs}
\end{lemma}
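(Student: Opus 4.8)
The plan is to prove the equivalences as a short cycle whose only genuine content is the two implications $(1)\Rightarrow(5)$ and $(4)\Rightarrow(1)$; everything else is immediate. First I would dispose of the degenerate ranges of $\kappa$. If $\kappa>\mfc$ then a separable metrizable space has size at most $\mfc$, so none is $\kappa$-crowded and no perfect Polish space has a $\kappa$-dense subset, whence all five statements hold vacuously. If $\kappa=\mfc$ then all five \emph{fail}: for any perfect Polish $X$ a Bernstein subset $B\subseteq X$ is $\mfc$-dense but, not being completely metrizable, is not homeomorphic to $X$ itself (which is also $\mfc$-dense), so $\BA^-_\mfc(X)$ fails for every $X$, and similarly $\cantor\not\cong\baire$ refutes $(1)$. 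Thus I may assume $\aleph_0\le\kappa<\mfc$. In this range the routine links are $(5)\Rightarrow(2)$, $(5)\Rightarrow(3)$, $(5)\Rightarrow(4)$, since $\cantor$ and $\baire$ are perfect Polish and such spaces exist, together with $(2)\Rightarrow(4)$ and $(3)\Rightarrow(4)$, since $\cantor$ and $\baire$ witness the existential $(4)$.

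For $(1)\Rightarrow(5)$, fix a perfect Polish $X$ and $\kappa$-dense $A,B\subseteq X$. Each is $\kappa$-crowded in the subspace topology, since a nonempty relatively open set has the form $A\cap U$ with $U\subseteq X$ open and nonempty, which has size $\kappa$ by $\kappa$-density; and $|A|=|B|=\aleph_0\cdot\kappa=\kappa<\mfc$, so by the fact that sets of size less than $\mfc$ are zero-dimensional both $A$ and $B$ are zero-dimensional. Hence $(1)$ applies and yields $A\cong B$, which is exactly $(5)$.

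The real work is $(4)\Rightarrow(1)$. Fix a perfect Polish $X$ with $\BA^-_\kappa(X)$ and two zero-dimensional $\kappa$-crowded spaces $W,W'$. It suffices to realize each of them as a $\kappa$-dense subset of $X$, for then $\BA^-_\kappa(X)$ identifies the two realizations up to homeomorphism and gives $W\cong W'$. So the crux is the claim that \emph{every} zero-dimensional $\kappa$-crowded space is homeomorphic to a $\kappa$-dense subset of $X$. I would reduce this to the statement that $X$ contains a dense $G_\delta$ subspace $Z$ with $Z\cong\baire$. Granting such a $Z$, Fact~\ref{embedintocantorspace} (the Baire-space clause, applicable as $\kappa<\mfc$) embeds $W$ as a $\kappa$-dense subset $Y^\ast$ of $\baire\cong Z$; transporting $Y^\ast$ into $Z$ gives a copy of $W$ that is $\kappa$-dense in $Z$. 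Since $Z$ is dense in $X$, for every nonempty open $U\subseteq X$ the trace $U\cap Z$ is nonempty and relatively open in $Z$, so this copy meets it in $\kappa$ points; hence the copy is $\kappa$-dense in $X$, as required.

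The main obstacle is therefore producing the dense $G_\delta$ copy of $\baire$ inside an arbitrary perfect Polish $X$, which is where non-zero-dimensional $X$ such as $\mathbb{R}^n$ must be handled and where I expect the argument to need the most care. I would build it by hand: fix a compatible complete metric and a countable dense set $\{c_i\}$, and for each $i$ choose radii $r^i_n$ dense in $(0,1)$ such that each sphere $S_i^n=\{x:d(x,c_i)=r^i_n\}$ is nowhere dense; this is possible since for fixed $i$ the spheres of distinct radii have disjoint interiors, so by separability only countably many radii are bad. Put $N=\bigcup_{i,n}S_i^n$, a meager $F_\sigma$ set, fix a countable dense $D_0$, and set $Z=X\setminus(N\cup D_0)$, a dense $G_\delta$ and hence nonempty Polish subspace. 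Each ball $B(c_i,r^i_n)$ has $X$-boundary contained in $S_i^n\subseteq N$, so its trace on $Z$ is clopen in $Z$; as these balls form a base of $X$, the space $Z$ is zero-dimensional. Finally, removing the dense set $D_0$ forces $Z$ to be nowhere locally compact, since a compact neighborhood in $Z$ would be closed in $X$ yet contain a nonempty relatively open piece of $Z$ whose $X$-closure meets $D_0$, a contradiction. By the Alexandrov--Urysohn characterization of the irrationals, a nonempty zero-dimensional nowhere locally compact Polish space is homeomorphic to $\baire$, so $Z\cong\baire$, completing $(4)\Rightarrow(1)$ and the cycle.
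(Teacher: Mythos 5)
Your proof is correct, but both substantive steps are handled differently from the paper. For the degenerate case $\kappa=\mfc$ the paper builds a meager $\mfc$-dense set and derives a contradiction from Lavrentiev's theorem plus the Baire category theorem, whereas you use a Bernstein set (and $\cantor\not\cong\baire$ for the unparametrized axiom); both work. The real divergence is in closing the cycle from the existential statement $(4)$: the paper proves $(4)\Rightarrow(3)$ by scattering countably many pairwise disjoint closed nowhere dense copies of $\cantor$ through a base of $X$ and distributing the countably many pieces $A_n$ of a $\kappa$-dense $A\subseteq\baire$ among them, then checking that the union of the images is a $\kappa$-dense copy of $A$ in $X$. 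You instead prove $(4)\Rightarrow(1)$ directly by exhibiting a dense $G_\delta$ subspace $Z\subseteq X$ homeomorphic to $\baire$ --- deleting a meager union of nowhere dense spheres to force zero-dimensionality and a countable dense set to kill local compactness, then invoking the Alexandrov--Urysohn characterization --- and pushing an arbitrary zero-dimensional $\kappa$-crowded space into $Z$ via Fact~\ref{embedintocantorspace}; $\kappa$-density in $X$ is then automatic because $Z$ is dense. Your route isolates a clean, reusable $\ZFC$ fact (every perfect Polish space contains a dense $G_\delta$ copy of the irrationals) and makes the density bookkeeping trivial, at the cost of the hands-on metric construction of $Z$; the paper's route avoids Alexandrov--Urysohn but must verify that the scattered union of images is simultaneously homeomorphic to $A$ and $\kappa$-dense. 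All the routine implications and the $(1)\Rightarrow(5)$ step coincide with the paper's.
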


\begin{proof}
    All of the above conditions fail if $\kappa = 2^{\aleph_0}$ since every perfect Polish space has $2^{\aleph_0}$-dense sets which are not homeomorphic. This is essentially well known but we include a proof for completeness. Let $X$ be a perfect Polish space. Since $X$ itself is $2^{\aleph_0}$-dense in itself we just have to show that it cannot be homeomorphic to every one of its $2^{\aleph_0}$-dense subsets. Let $\{U_n\}_{n < \omega}$ be a basis for $X$ and let $C_n \subseteq U_n$ be a closed, nowhere dense set of size continuum for each $n < \omega$. Let $C = \bigcup_{n < \omega} C_n$. This is a meager (in the sense of $X$), $2^{\aleph_0}$-dense subset of $X$. Suppose $h: X \to C$ were a homeomorphism. By Fact \ref{Lavthm}, $h$ lifts to a homeomorphism between $G_\delta$-subsets $X' \supseteq X$ and $C' \supseteq C$ but since $X$ has no supersubset in itself there is no proper lift so $C$ must be a dense $G_\delta$ in fact, contradicting the Baire category theorem\footnote{This fact can also be proved by a counting argument using Lemma \ref{BA-impliesCA} below. That lemma states (in part) that $\BA^-_\kappa$ implies $2^\kappa = \cc$. However $2^{\cc}$ of course can never be equal to $\cc$.}. 

    Therefore we can assume that $\kappa < 2^{\aleph_0}$ and hence that all $\kappa$-crowded spaces we will consider are in fact zero dimensional. The equivalence now of (1), (2) and (3) follows from Fact \ref{embedintocantorspace}. The implications (5) implies (4), (5) implies (3) and (5) implies (2) are immediate. Therefore it suffices to show that (1) implies (5) and (4) implies (3). Let's start with (1) implies (5). This is by contrapositive - suppose that $X$ is a perfect Polish space and $A, B \subseteq X$ are $\kappa$-dense but not homeomorphic. Since $\kappa < 2^{\aleph_0}$ the spaces $A$ and $B$ are zero dimensional, separable, $\kappa$-crowded spaces which are not homeomorphic hence (1) fails. 

    Finally let us show that (4) implies (3). Assume (4) and let $X$ be a perfect Polish space so that $\BA_\kappa^-(X)$ holds. Since $\baire$ can be written as $2^\omega \setminus D$ for a countable dense set $D$, and $2^\omega$ can be embedded into any open subset of $X$ as a closed, nowhere dense set, for any nonempty open $U \subseteq X$ there is a closed nowhere dense $C \subseteq U$ with $\baire$ homeomorphic to a subset of it. Fix $A$ and $B$ which are $\kappa$-dense subsets of $\baire$. We need to show that they are homeomorphic to $\kappa$-dense subsets of $X$ and hence, by $\BA^-_\kappa (X)$, each other. We will just show that $A$ can be embedded into $X$ as a $\kappa$-dense subset, the argument for $B$ is of course symmetric. For each $n < \omega$ let $A_n$ be the intersection of $A$ with the basic open of all sequences in $\baire$ starting in $n$. Note that $A_n$ is $\kappa$-dense in this open set, which is homeomorphic to $\baire$, and $A$ is the (topological) disjoint union of the $A_n$'s. Let $\{U_n\}_{n < \omega}$ be a base for the topology on $X$. Let $C_0$ be a closed, nowhere dense copy of $2^\omega$ in $U_0$. Now inductively for $n < \omega$ let $C_{n+1} \subseteq U_n \setminus \bigcup_{j < n+1} C_n$ be a closed, nowhere dense copy of $2^\omega$. For each $n < \omega$ now let $f_n:A_n \to C_n$ map $A_n$ homeomorphically into $C_n$. Then $\bigcup_{n < \omega} f_n``A_n$ is a $\kappa$-dense copy of $A$ in $X$ as needed. 
\end{proof}


Let us single out that in particular the following holds.
\begin{proposition}
    For every uncountable cardinal $\kappa$ the axioms $\BA^-_\kappa$ and  $\BA^-_\kappa (\mathbb R)$ are equivalent.
\end{proposition}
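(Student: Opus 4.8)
The plan is to derive this immediately from Lemma \ref{BA-equivs}, observing that $\mathbb R$ is itself a perfect Polish space: the real line is separable, completely metrizable, and has no isolated points, so it falls squarely within the scope of clauses (4) and (5) of that lemma. The only point worth flagging in advance is that, although $\mathbb R$ is connected and hence very far from zero-dimensional, Lemma \ref{BA-equivs} quantifies over \emph{all} perfect Polish spaces, and its proof already handles the passage to the zero-dimensional setting internally (using that when $\kappa < 2^{\aleph_0}$ every set of size $\kappa$ is zero dimensional, applied to the $\kappa$-dense subsets of $\mathbb R$, which have size exactly $\kappa$). So no reduction needs to be redone here.

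For the forward direction I would note that, by Lemma \ref{BA-equivs}, the axiom $\BA^-_\kappa$ (clause (1)) is equivalent to the assertion that $\BA^-_\kappa(X)$ holds for \emph{every} perfect Polish space $X$ (clause (5)). Since $\mathbb R$ is a perfect Polish space, clause (5) specializes to yield $\BA^-_\kappa(\mathbb R)$; hence $\BA^-_\kappa$ implies $\BA^-_\kappa(\mathbb R)$. For the converse, I would use that $\BA^-_\kappa(\mathbb R)$ is precisely clause (4) of Lemma \ref{BA-equivs} witnessed by the single space $X = \mathbb R$, namely the statement that $\BA^-_\kappa(X)$ holds for \emph{some} perfect Polish space. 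By the lemma, clause (4) implies clause (1), i.e. $\BA^-_\kappa$. Combining the two directions gives the stated equivalence.

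There is no genuine obstacle here: the entire content has already been absorbed into Lemma \ref{BA-equivs}, and the Proposition is obtained simply by instantiating that lemma at the single space $X = \mathbb R$. The restriction to uncountable $\kappa$ is inessential to the argument (which would go through verbatim for any $\kappa \le 2^{\aleph_0}$, including $\kappa = 2^{\aleph_0}$, where both axioms fail and are thus equivalent as false statements); it merely excludes the trivial countable case, in which both axioms hold outright by Cantor's theorem on countable dense sets.
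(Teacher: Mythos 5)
Your proposal is correct and matches the paper exactly: the paper states this proposition without proof as an immediate instance of Lemma \ref{BA-equivs}, obtained by specializing clauses (4) and (5) to the perfect Polish space $X = \mathbb{R}$, which is precisely what you do.
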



Lemma \ref{BA-equivs} gives several immediate separations. First we note the following.

\begin{lemma}
    If $\mathfrak{p} > \kappa$ then $\BA^-_\kappa$ holds. In particular for every perfect Polish space $X$ it is consistent that $\BA^-_\kappa (X)$ holds.
\end{lemma}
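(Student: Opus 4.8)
The plan is to prove that $\mathfrak{p} > \kappa$ implies $\BA^-_\kappa$ by showing that any two zero-dimensional, $\kappa$-crowded (separable, metrizable) spaces are homeomorphic. By Lemma~\ref{BA-equivs}, it suffices to handle $\BA^-_\kappa(2^\omega)$, i.e.\ to show that any two $\kappa$-dense subsets $A, B \subseteq 2^\omega$ are homeomorphic. Note first that $\mathfrak{p} > \kappa$ forces $\kappa < 2^{\aleph_0}$ (since $\mathfrak{p} \leq 2^{\aleph_0}$ and the inequality is strict), so by Fact~\ref{embedintocantorspace} and the fact that sets of size $< \mfc$ are zero-dimensional, working inside $2^\omega$ is legitimate. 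The overall strategy is a back-and-forth construction of a homeomorphism between $A$ and $B$ along a suitable tree of clopen pieces, using $\mathfrak{p} > \kappa$ precisely to carry the construction through $\kappa$-many stages while controlling the images of the $\kappa$-many points.

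First I would set up the back-and-forth along the binary tree structure inherited from $2^\omega$, or more flexibly along an abstract scheme matching clopen partitions of $A$ with clopen partitions of $B$. At each finite level one has a clopen partition of $A$ into nonempty (hence, by $\kappa$-crowdedness, $\kappa$-sized) pieces and a matching partition of $B$, together with a bijection between the index sets that is to be refined at later stages. Enumerate $A = \{a_\xi : \xi < \kappa\}$ and $B = \{b_\xi : \xi < \kappa\}$. At stage $\xi$ one wants to decide the image of $a_\xi$ (and pre-image of $b_\xi$) by refining the current partition so that $a_\xi$ is isolated into a clopen piece mapped to a clopen piece of $B$ containing its designated image, maintaining the invariant that matched pieces are both $\kappa$-crowded and clopen and that the induced map is a homeomorphism on the part decided so far. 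The key point is to ensure that after all $\kappa$ stages the accumulated clopen constraints actually define a homeomorphism of all of $A$ onto all of $B$ rather than merely a partial map.

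The main obstacle, and where $\mathfrak{p} > \kappa$ enters, is continuity in both directions at limit stages and at the end of the construction: one must guarantee that the nested clopen neighborhoods assigned to each point $a_\xi$ shrink to a single point and, conversely, that every point of $B$ is hit. This is a $\kappa$-length fusion, and the combinatorial content is exactly that a descending (or finitely-deciding) system of clopen constraints of size $\kappa$ with the finite compatibility property admits a common refinement. Concretely, I expect to reformulate the requirement ``all $\kappa$ earlier commitments can be simultaneously respected while isolating the next point'' as the statement that a certain family of size $<\mathfrak{p}$ with the strong finite intersection property has a pseudointersection; since $\kappa < \mathfrak{p}$, such a pseudointersection exists and lets the construction proceed. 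The delicate step is packaging the clopen-partition commitments into subsets of $\omega$ (e.g.\ via the branching behaviour in $2^\omega$) so that ``refinement preserving all prior matchings'' becomes literally a pseudointersection problem, thereby invoking $\mathfrak{p} > \kappa$.

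For the final sentence of the statement, the consistency of $\BA^-_\kappa(X)$ for every perfect Polish space $X$ then follows immediately: it is classical that $\mathfrak{p} > \kappa$ (indeed $\mathfrak{p} = \aleph_2 > \aleph_1 = \kappa$, or more generally $\mathfrak{p}$ large) is consistent, for instance under $\MA + \neg\CH$ obtained by a finite support ccc iteration, and by the already-established Lemma~\ref{BA-equivs} the parametrization over $X$ is superfluous, so $\BA^-_\kappa$ yields $\BA^-_\kappa(X)$ for all perfect Polish $X$ simultaneously.
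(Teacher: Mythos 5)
Your reduction to $\BA^-_\kappa(2^\omega)$ via Lemma~\ref{BA-equivs} and your closing paragraph on consistency match the paper's logic, but the core of your argument has a genuine gap. The paper does not run a back-and-forth at all: it simply cites the result of Baldwin--Beaudoin \cite{BaldwinBeaudoin89} (see also \cite[Theorem 2.1]{Medini15}) that $\mfp > \kappa$ implies the \emph{full} axiom $\BA_\kappa(2^\omega)$, which trivially yields $\BA^-_\kappa(2^\omega)$ and hence, by Lemma~\ref{BA-equivs}, $\BA^-_\kappa(X)$ for every perfect Polish $X$. The mechanism behind that cited result is not a $\kappa$-length recursion with a pseudointersection extracted at each stage; it is a single application of Bell's theorem ($\mfp > \kappa$ is equivalent to $\MA_\kappa$ for $\sigma$-centered posets) to the poset of Definition~\ref{medinidef}, whose conditions are \emph{finite} approximations (a permutation of $2^n$ together with a finite partial injection respecting it), with $\kappa$ many dense sets guaranteeing totality, surjectivity and convergence. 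All coherence problems are absorbed into genericity for that one poset.

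The step you defer --- ``packaging the clopen-partition commitments into subsets of $\omega$ so that refinement preserving all prior matchings becomes literally a pseudointersection problem'' --- is precisely the entire content of the lemma, and as described it does not obviously work. In a stage-by-stage recursion the commitments accumulated by stage $\xi$ are no longer finite, so at limit stages (especially of uncountable cofinality) there is no reason the partial map decided so far is continuous, injective on closures of pieces, or extendible; a pseudointersection chosen independently at each stage gives no global coherence across stages, and $\mfp$-style combinatorics applied one stage at a time does not by itself control bicontinuity of the limit map. If you want a self-contained proof rather than a citation, the correct packaging is the forcing-theoretic one: verify that $\P_{A,B}$ is $\sigma$-centered of size $\kappa$, exhibit the dense sets, and invoke Bell's theorem once. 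Otherwise the honest move is the paper's: quote $\mfp > \kappa \Rightarrow \BA_\kappa(2^\omega)$ and let Lemma~\ref{BA-equivs} do the rest.
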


This is simply by the main result of \cite{BaldwinBeaudoin89}, see also \cite[Theorem 2.1]{Medini15}, that $\mfp > \kappa$ implies $\BA_\kappa (2^\omega)$ and hence $\BA^-_\kappa(2^\omega)$. The last point is worth noting in light of the fact that $\BA_\kappa (X)$ can fail (in $\ZFC$) for some perfect Polish spaces $X$. For instance if $X$ is the unit interval $[0, 1]$ then no autohomeomorphism can map a dense set containing the endpoints to one which does not. More generally if $X$ is a manifold with boundary then there will be $\kappa$-dense sets which contain the the boundary and those that do not and the same issue arises.  

Finally we note the following.
\begin{corollary}
    $\BA^- (\mathbb R)$ does not imply $\BA$ ($= \BA (\mathbb R)$ ).
\end{corollary}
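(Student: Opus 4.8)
The plan is to observe that this separation is an immediate consequence of the preceding Lemma together with the classical result of Abraham and Shelah already cited in the introduction. First I would recall that Abraham and Shelah proved in \cite{AvrahamShelah81} the consistency of $\MA + \neg \CH$ with the failure of $\BA$; this is exactly the assertion, noted in the introduction, that $\BA$ does not follow from $\MA$. Fix such a model $V$.

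Next I would note that $\MA$ implies $\mathfrak{p} = \mathfrak{c}$, so in $V$ (where $\neg \CH$ holds) we have $\mathfrak{p} = \mathfrak{c} > \aleph_1$. Applying the preceding Lemma with $\kappa = \aleph_1$, together with the Proposition identifying $\BA^-_{\aleph_1}$ with $\BA^-(\mathbb{R})$, it follows that $\BA^-(\mathbb{R})$ holds in $V$. On the other hand $\BA$ fails in $V$ by the choice of model. Hence $V$ witnesses that $\BA^-(\mathbb{R})$ does not imply $\BA$, as desired.

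The only genuinely new ingredient is the observation, already isolated in the preceding Lemma, that $\mathfrak{p} > \aleph_1$ suffices to make all $\aleph_1$-crowded zero-dimensional separable metrizable spaces homeomorphic, and in particular all $\aleph_1$-dense subsets of $\mathbb{R}$; the separation itself is inherited entirely from the Abraham--Shelah model. Consequently there is no real obstacle to overcome here: all the hard work is imported from \cite{AvrahamShelah81}, and the subtlety lies solely in recognizing that $\BA^-$ is weak enough to be a consequence of $\mathfrak{p} > \aleph_1$, whereas the full Baumgartner axiom is strictly stronger than anything $\MA$ can deliver.
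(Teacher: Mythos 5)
Your proposal is correct and follows essentially the same route as the paper: the paper likewise derives the corollary by combining the preceding Lemma (that $\mathfrak{p} > \aleph_1$ implies $\BA^-_{\aleph_1}$, hence $\BA^-(\mathbb{R})$ by the Proposition) with the Abraham--Shelah fact that $\BA$ can fail under $\MA + \neg\CH$. You have merely spelled out the details that the paper's one-line proof leaves implicit.
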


\begin{proof}
This is an immediate consequence of the previous corollary and the aforementioned fact that $\BA$ does not follow from $\MA + \neg \CH$. 
\end{proof}

Awkwardly I don't know if the above corollary is true if $\mathbb R$ is replaced by Baire or Cantor space. A more precise question along these lines will be listed in the final section of this paper. 

With regards to the $\mathsf{U}_\kappa$ axioms, the parametrizations are equally entirely superfluous. 
\begin{fact}
Let $\kappa < 2^{\aleph_0}$ be a cardinal. The following are equivalent.
\begin{enumerate}
    \item $\mathsf{U}_\kappa$
    \item $\mathsf{U}_\kappa (X)$ holds for every perfect Polish space $X$. 
    \item $\mathsf{U}_\kappa (2^\omega)$
    \item There is a perfect Polish space $X$ so that $\mathsf{U}_\kappa(X)$ holds.
\end{enumerate}
\label{Uequiv1}
\end{fact}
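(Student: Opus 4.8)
The plan is to observe that, since we assume $\kappa < 2^{\aleph_0}$, all four statements are really assertions about the existence of a single size-$\kappa$ object that is universal, under homeomorphic embeddings, for the size-$\kappa$ subsets of whatever space is in play, and that $2^\omega$ serves as a common home for all of them. Three ingredients drive the argument: first, the cited fact that under $\kappa < 2^{\aleph_0}$ every space of size $\kappa$ is zero-dimensional, so that ``zero-dimensional space of size $\kappa$'' just means ``separable metrizable space of size $\kappa$''; second, that every zero-dimensional separable metrizable space embeds into $2^\omega$; and third, the fact recalled in the preliminaries that every perfect Polish space $X$ is Cantor crowded and hence contains a closed copy of $2^\omega$. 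Throughout I read the witness $Z$ in the definition of $\mathsf{U}_\kappa$ as having size $\kappa$ (as the phrase ``any \emph{other} \dots\ space of size $\kappa$'' indicates), matching the parametrized $\mathsf{U}_\kappa(X) = \mathsf{U}_{\kappa,\kappa}(X)$.

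I would then establish the equivalences through the essential implications $(1)\Rightarrow(2)$ and $(4)\Rightarrow(1)$, noting that $(2)\Rightarrow(3)$ and $(2)\Rightarrow(4)$ are immediate because $2^\omega$ is itself a perfect Polish space (and some perfect Polish space exists), while $(3)$ is just the instance $X = 2^\omega$ of $(4)$. For $(1)\Rightarrow(2)$, given an abstract universal $Z$ of size $\kappa$ and an arbitrary perfect Polish $X$, I would transport $Z$ into $X$ along $Z\hookrightarrow 2^\omega \hookrightarrow X$ to get a size-$\kappa$ set $Z_X \subseteq X$; then any $W\subseteq X$ of size $\kappa$ is zero-dimensional of size $\kappa$, hence embeds into $Z\cong Z_X$, so $Z_X$ witnesses $\mathsf{U}_\kappa(X)$. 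For $(4)\Rightarrow(1)$, given $Z\subseteq X$ witnessing $\mathsf{U}_\kappa(X)$ for some perfect Polish $X$, I would show that $Z$, viewed as an abstract space, witnesses $\mathsf{U}_\kappa$: an arbitrary zero-dimensional $W$ of size $\kappa$ embeds into $2^\omega$ and thence into $X$ as a size-$\kappa$ subset $W'$, which by $\mathsf{U}_\kappa(X)$ embeds into $Z$, and composing yields an embedding of $W$ into $Z$. In every case the verification is a routine composition of embeddings, together with the bookkeeping that all sets in sight have size exactly $\kappa$ (embeddings are injective, so sizes are preserved) and that each relevant $W$ is zero-dimensional by the size bound.

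The one step that genuinely needs care — and the only place the argument is not purely formal — is the appeal to the universality of $2^\omega$: the paper's Fact \ref{embedintocantorspace} supplies embeddings only for $\kappa$-\emph{crowded} spaces, whereas here $W$ and $Z$ are arbitrary size-$\kappa$ (equivalently, arbitrary zero-dimensional separable metrizable) spaces and need not be crowded. I would therefore invoke instead the standard fact that $2^\omega$ is universal for \emph{all} zero-dimensional separable metrizable spaces, so that each embeds as a (possibly non-dense, non-closed) subspace. This is elementary: if $\{B_n\}_{n<\omega}$ is a countable clopen basis for such a space, then $x \mapsto (\mathbbm 1_{B_n}(x))_{n<\omega}$ is a continuous injection whose image is homeomorphic to the domain, since it sends each basic clopen set to a relatively clopen subset of the image. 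With this embedding in hand the remaining obstacle is merely notational, and the two essential implications — and hence the full cycle — close.
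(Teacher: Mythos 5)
Your proposal is correct and follows essentially the same route as the paper: both arguments rest on the facts that $\kappa < 2^{\aleph_0}$ forces all size-$\kappa$ sets to be zero-dimensional, that $2^\omega$ is universal for zero-dimensional separable metrizable spaces, and that every perfect Polish space contains a copy of $2^\omega$, with only the arrangement of the implication cycle differing. Your observation that Fact \ref{embedintocantorspace} literally covers only $\kappa$-crowded spaces, so one should instead invoke the general universality of $2^\omega$ for zero-dimensional spaces, is a fair and correctly patched refinement of the citation the paper uses.
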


\begin{proof}
    The equivalence of (1) and (3) is an immediate consequence of Fact \ref{embedintocantorspace}. Moreover (2) implies (3) implies (4) is by definition. Also, since every perfect Polish space is Cantor crowded and $\kappa$ is assumed to be less than the continuum, if $X$ is a perfect Polish space then every subset of $X$ of size $\kappa$ is homeomorphic to a subset of $2^\omega$ of size $\kappa$ and vice versa which proves the equivalence of (3) with both (2) and (4).
\end{proof}

A variant of this proof shows the following.
\begin{fact}
Let $\kappa < \lambda < 2^{\aleph_0}$ be two cardinals. The following are equivalent.
\begin{enumerate}
    \item $\mathsf{U}_{\kappa, \lambda} (X)$ holds for every perfect Polish space $X$. 
    \item $\mathsf{U}_{\kappa, \lambda} (2^\omega)$
    \item There is a perfect Polish space $X$ so that $\mathsf{U}_{\kappa, \lambda} (X)$ holds.
\end{enumerate}
\label{Uequiv2}
\end{fact}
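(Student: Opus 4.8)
The implications $(1) \Rightarrow (2)$ and $(2) \Rightarrow (3)$ are trivial, since $2^\omega$ is itself a perfect Polish space. The entire content therefore lies in establishing $(3) \Rightarrow (1)$, and the plan is to show, exactly as in the proof of Fact~\ref{Uequiv1}, that the truth value of $\mathsf{U}_{\kappa,\lambda}(X)$ is insensitive to the choice of perfect Polish space $X$. The point is that, because $\lambda < 2^{\aleph_0}$, every subset of a perfect Polish space of size at most $\lambda$ is zero-dimensional, separable and metrizable. Consequently the homeomorphism types realized as subsets of size $\kappa$ (respectively $\lambda$) of \emph{any} perfect Polish space are always precisely the zero-dimensional separable metrizable spaces of that size, and this class is manifestly independent of $X$.

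More precisely, I would first isolate the two transfer facts. On the one hand, any zero-dimensional separable metrizable space of size $\leq \lambda$ embeds into $2^\omega$; this is the universality of Cantor space for zero-dimensional separable metrizable spaces and is contained in the proof of Fact~\ref{embedintocantorspace}. On the other hand, since every perfect Polish space $X$ is Cantor crowded it contains a closed copy $C \cong 2^\omega$, so every subset of $2^\omega$ of size $\leq \lambda$ is homeomorphic to a subset of $X$ via $2^\omega \cong C \subseteq X$. Combining these, for any two perfect Polish spaces the classes of homeomorphism types of their size-$\kappa$ and size-$\lambda$ subsets coincide.

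With these in hand the transport is routine. Assuming $(3)$, fix a perfect Polish $X$ and a witness $Z \subseteq X$ of size $\lambda$, and let $Y$ be an arbitrary perfect Polish space. Using the first transfer fact I would move $Z$ to a homeomorphic copy $Z' \subseteq Y$ of size $\lambda$. To check that $Z'$ witnesses $\mathsf{U}_{\kappa,\lambda}(Y)$, take any $W' \subseteq Y$ of size $\kappa$; as $W'$ is zero-dimensional separable metrizable of size $\kappa$, it is homeomorphic to some $W \subseteq X$ of size $\kappa$, which by $\mathsf{U}_{\kappa,\lambda}(X)$ embeds into $Z$. Chasing the homeomorphisms $W' \cong W$ and $Z \cong Z'$, using only that embeddability into a space is preserved when the space and the domain are replaced by homeomorphic copies, yields an embedding $W' \to Z'$. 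Hence $\mathsf{U}_{\kappa,\lambda}(Y)$ holds, establishing $(1)$.

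I expect no genuine obstacle here; the only subtlety is that the sets $W'$ arising as arbitrary size-$\kappa$ subsets of $Y$ carry no crowdedness, so I must invoke the unrestricted embedding of zero-dimensional separable metrizable spaces into $2^\omega$ rather than the $\kappa$-dense refinement stated explicitly in Fact~\ref{embedintocantorspace}. Everything else is bookkeeping with compositions of homeomorphisms.
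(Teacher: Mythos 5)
Your proposal is correct and is essentially the argument the paper intends: the paper derives Fact \ref{Uequiv2} as ``a variant'' of the proof of Fact \ref{Uequiv1}, which uses exactly your two transfer facts (sets of size $<2^{\aleph_0}$ are zero-dimensional and hence embed into $2^\omega$, and every perfect Polish space contains a closed copy of $2^\omega$) to show the homeomorphism types of small subsets do not depend on the ambient perfect Polish space. Your explicit check that the transported witness $Z'$ still works is just the bookkeeping the paper leaves implicit.
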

In both Facts \ref{Uequiv1} and \ref{Uequiv2} we note that having either $\kappa$ or $\lambda$ equal to $\cc$ simply gives $\ZFC$-provable axioms. This is because $2^\omega$ is a witness to $\mathsf{U}_{\cc}$ and $X$ itself is a witness to $\mathsf{U}_{\cc}(X)$ for every perfect Polish space $X$. 

\section{Weak Baumgartner Axiom}

In this section short section we observe that $\BA^-_\kappa$ implies some of the more striking consequences of $\BA$. We start with some cardinal arithmetic. It was shown in \cite[Theorem 7.1]{ARS85} that $\BA$ implies that $2^{\aleph_0} = 2^{\aleph_1}$. It turns out a generalization of this fact follows from just $\BA^-$.

\begin{lemma}
    Suppose $\kappa$ is an uncountable cardinal. If $\BA^-_\kappa$ holds then $2^{\aleph_0} = 2^\kappa$. \label{BA-impliesCA}
\end{lemma}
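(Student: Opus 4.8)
The plan is to prove $2^{\aleph_0} = 2^\kappa$ under $\BA^-_\kappa$ by establishing the two inequalities separately. The inequality $2^{\aleph_0} \leq 2^\kappa$ is immediate since $\kappa \geq \aleph_1 > \aleph_0$. The real content is the reverse inequality $2^\kappa \leq 2^{\aleph_0}$, which I would obtain by a counting argument: I will exhibit an injection from the collection of subsets of $\kappa$ (equivalently, from $2^\kappa$) into a set of size $2^{\aleph_0}$. The natural candidate is to use homeomorphism types. Note first that since $\BA^-_\kappa$ presupposes $\kappa < 2^{\aleph_0}$ (the case $\kappa = 2^{\aleph_0}$ having been ruled out in Lemma \ref{BA-equivs}), every set of size $\kappa$ is zero-dimensional by the second Fact above, and by Fact \ref{embedintocantorspace} every $\kappa$-crowded zero-dimensional space embeds into $2^\omega$.

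First I would fix a $\kappa$-crowded zero-dimensional space; by $\BA^-_\kappa$ it is the unique such space up to homeomorphism, and by Fact \ref{embedintocantorspace} we may take it to be a fixed $\kappa$-dense subset $D \subseteq 2^\omega$. The strategy is to encode an arbitrary $S \subseteq \kappa$ as a subset $A_S \subseteq 2^\omega$ of size $\kappa$ in such a way that $S$ can be recovered from the homeomorphism type of $A_S$. Concretely I would build, for each $S$, a space $A_S$ by taking a disjoint union of ``building blocks'' indexed by $\kappa$, where the $\alpha$-th block is a copy of one of two non-homeomorphic zero-dimensional pieces depending on whether $\alpha \in S$. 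Since there are only $2^{\aleph_0}$ homeomorphism types of separable metrizable spaces of size $\kappa$ (each embeds into $2^\omega$, and a subset of $2^\omega$ of size $\kappa < \mfc$ together with the countably many basic clopen sets restricted to it determines its topology, giving at most $(2^\omega)^\kappa$-many — but one must be careful here), this alone does not suffice. Instead the cleaner route exploits $\BA^-_\kappa$ directly.

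The key idea is that $\BA^-_\kappa$ collapses homeomorphism types of $\kappa$-crowded spaces to a single type, which I will use to show that many distinct subsets of $2^\omega$ are nonetheless ``the same'' in a way that lets me count embeddings rather than spaces. Concretely: fix the unique $\kappa$-crowded zero-dimensional space $D$. For any zero-dimensional space $Y$ of size $\kappa$ (not necessarily crowded), I can form a $\kappa$-crowded space by taking a disjoint union $Y \sqcup D$, which by $\BA^-_\kappa$ is homeomorphic to $D$ itself. Thus $Y$ embeds as a clopen-complemented piece inside the fixed space $D$. Now I would count: each such $Y$ corresponds (via a fixed homeomorphism $Y \sqcup D \cong D$) to a subset of $D \subseteq 2^\omega$, and since there is a \emph{single} target space $D$ of size $\kappa < \mfc$, the number of its subsets that can arise is bounded by $2^{|D|} = 2^\kappa$ — which is circular. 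To break the circularity I would instead count \emph{continuous functions into $2^\omega$}: an embedding of $Y$ into $2^\omega$ is determined by a continuous map, and each continuous map on a dense subset of a Polish space lifts by Fact \ref{Lavthm}(1) to a continuous map on a $G_\delta$, of which there are only $2^{\aleph_0}$ many Borel codes. Assigning to each $S \subseteq \kappa$ the homeomorphism type of the associated $A_S$, and recovering $S$ from a lifted embedding, produces an injection $2^\kappa \hookrightarrow 2^{\aleph_0}$.

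The step I expect to be the main obstacle is making the encoding $S \mapsto A_S$ both injective \emph{and} compatible with $\BA^-_\kappa$: the spaces $A_S$ must be distinguishable from their homeomorphism types (so that $S$ is recoverable), yet $\BA^-_\kappa$ asserts that all $\kappa$-crowded spaces are homeomorphic, so the $A_S$ must \emph{not} be $\kappa$-crowded, while still being forced by $\BA^-_\kappa$ to embed into the fixed universal object in a controlled way. Getting the invariant right — pinning down \emph{which} homeomorphism invariant of $A_S$ detects membership $\alpha \in S$ across all $\kappa$ coordinates simultaneously — together with the lifting-and-Borel-code counting that caps the number of possible embeddings at $2^{\aleph_0}$, is where the argument must be done carefully; the rest is bookkeeping with Facts \ref{Lavthm} and \ref{embedintocantorspace}.
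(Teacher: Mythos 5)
There is a genuine gap here: your proposal never closes, and the architecture you are pursuing is working \emph{against} the hypothesis rather than with it. You are trying to build an injection from $2^\kappa$ into a set of size $2^{\aleph_0}$ by encoding $S \subseteq \kappa$ into the \emph{homeomorphism type} of a space $A_S$, and you correctly notice the tension: $\BA^-_\kappa$ collapses all $\kappa$-crowded zero-dimensional types to one, so your $A_S$ cannot be crowded, and for non-crowded spaces you have no usable bound of $2^{\aleph_0}$ on the number of homeomorphism types (your parenthetical ``but one must be careful here'' is flagging a claim that is not true in any obvious way --- separable metrizable spaces embed into the Hilbert cube, giving only a $2^{2^{\aleph_0}}$ bound on types a priori). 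You then try two repairs (clopen-complemented embeddings into a universal $D$, and counting continuous maps via lifts), correctly identify the second as circular-breaking in spirit, but your final paragraph concedes that the actual invariant and the actual injection are still to be found. That concession is precisely where the proof lives.

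The missing idea is to invert your scheme: do not try to make the spaces non-homeomorphic so as to encode $S$; instead \emph{use} the fact that $\BA^-_\kappa$ makes them all homeomorphic, and count the homeomorphisms rather than the types. Concretely, the paper fixes one $\kappa$-dense $A$ in a perfect Polish space $P$, partitions it into $\kappa$ disjoint countable dense pieces $A = \bigcup_{\alpha<\kappa} A_\alpha$, and for each $Z \in [\kappa]^\kappa$ sets $A_Z = \bigcup_{\alpha \in Z} A_\alpha$. Each $A_Z$ is again $\kappa$-dense, distinct $Z$ give distinct sets $A_Z$, and there are $2^\kappa$ of them. By $\BA^-_\kappa$ there is a homeomorphism $h_Z : A \to A_Z$ for each $Z$; by Fact \ref{Lavthm} each $h_Z$ lifts to a homeomorphism $\hat h_Z$ between $G_\delta$ subsets of $P$, and there are only $2^{\aleph_0}$ such triples $(V,W,\hat h)$. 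If $2^\kappa > 2^{\aleph_0}$, two distinct $Z \neq Z'$ share a lift, but then $h_Z = \hat h \hook A = h_{Z'}$ would map $A$ onto the two distinct sets $A_Z \neq A_{Z'}$, a contradiction. You already had both essential tools in hand (Lavrentiev lifting and the $2^{\aleph_0}$ count of Borel-coded objects); what is absent is the realization that the hypothesis hands you $2^\kappa$-many \emph{distinct homeomorphisms out of a single source}, and that this family --- not any family of pairwise non-homeomorphic spaces --- is the thing to count.
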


\begin{proof}
    By Lemma \ref{BA-equivs}, $\BA^-_\kappa$ is equivalent to $\BA^-_\kappa(P)$ for any perfect Polish space $P$. Fix such a $P$ and let $\kappa$ be an uncountable cardinal. Let $A \subseteq P$ be $\kappa$ dense. We claim that if $2^\kappa > 2^{\aleph_0}$ then there are $\kappa$ dense subsets of $A$ which are not homeomorphic. Partition $A$ into $\kappa$ many disjoint, countable dense pieces, say $A = \bigcup_{\alpha < \kappa} A_\alpha$ (this is possible by $\kappa$-density). For each $Z \in [\kappa]^\kappa$ let $A_Z = \bigcup_{\alpha \in Z} A_\alpha$. Note that for each such $Z$ the set $A_Z$ is also $\kappa$-dense and for distinct $Z$ and $Z'$ we have $A_Z \neq A_{Z'}$. By $\BA^-_\kappa (P)$ for every $Z \in [\kappa]^\kappa$ there is a homeomorphism $h_Z:A \to A_Z$ and hence by Fact \ref{Lavthm} there are $G_\delta$ subsets $V_Z$ and $W_Z$ of $P$ and a homeomorphism $\hat{h}_Z:V_Z \to W_Z$ extending $h_Z$. But there are only $2^{\aleph_0}$ many triples of a pair of $G_\delta$ subsets of $P$ and a homeomorphism between them so if $2^\kappa > 2^{\aleph_0}$ then there are distinct $Z \neq Z'$ so that $\hat{h}_Z = \hat{h}_{Z'} := \hat{h}$. However this is immediately a contradiction as $\hat{h} \hook A$ would have to homeomorphically map $A$ onto two distinct sets.
\end{proof}

Next we remark that an unpublished result of Medini states that similarly $\BA^-_\kappa$ implies $\mfb \neq \kappa$. In fact, much more is true.  
\begin{theorem}[Medini, Unpublished]
    If $X$ is a Cantor-crowded space and $\BA_\kappa^-(X)$ holds then $\kappa \neq \mfb$ and $\kappa \neq \mfd$.\label{medini}
\end{theorem}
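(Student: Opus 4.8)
The plan is to separate the two $\kappa$-dense subsets of $X$ that I build by a genuinely \emph{topological} invariant, so that $\BA^-_\kappa(X)$ --- which only grants abstract homeomorphisms --- suffices to force a contradiction. The natural invariants are the Menger and Hurewicz covering properties, because of the classical characterizations (due to Hurewicz and, in this form, to Just--Miller--Scheepers--Szeptycki): a separable metrizable $Y$ is Menger iff every continuous image of $Y$ in $\baire$ is non-dominating, and $Y$ is Hurewicz iff every continuous image of $Y$ in $\baire$ is $\le^*$-bounded. Both are preserved by homeomorphisms, by countable unions, and by closed subspaces. I would treat $\kappa=\mfd$ with the Menger property and $\kappa=\mfb$ with the Hurewicz property; the two arguments are formally identical, with ``dominating/non-dominating'' replaced by ``unbounded/bounded''. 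Observe first that if $\BA^-_\kappa(X)$ holds then $\kappa<\mfc$ by Lemma~\ref{BA-equivs} (the axiom fails at $\mfc$), so every set of size $\kappa$ below is zero-dimensional and embeds in $2^\omega$; moreover if $\mfd=\mfc$ (resp. $\mfb=\mfc$) there is nothing to prove, so I may assume $\mfd<\mfc$ (resp. $\mfb<\mfc$).

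Next I would set up the ambient geometry exactly as in the proof of Lemma~\ref{BA-equivs}: using Cantor-crowdedness, fix pairwise disjoint, nowhere dense closed copies $C_n\cong 2^\omega$ of the Cantor space, $n<\omega$, arranged so that every nonempty basic open subset of $X$ contains some $C_n$. Then any set $A=\bigcup_n A_n$ with $A_n\subseteq C_n$ and $A_n$ $\kappa$-dense in $C_n$ is $\kappa$-dense in $X$, each $A_n=A\cap C_n$ is closed in $A$, and $A$ is Menger iff every $A_n$ is Menger. For the \emph{negative witness} place in $C_0$ a set $A_0$ that is $\kappa$-dense in $C_0$ and, regarded inside a copy of $\baire$ sitting in $C_0$, is dominating (e.g. the union of a dominating family of size $\mfd=\kappa$ with any $\kappa$-dense set); the inclusion $A_0\hookrightarrow\baire$ then has dominating image, so $A_0$, and hence $A$, is non-Menger. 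For the \emph{positive witness} I need a single $\kappa$-dense Menger subset of $2^\omega$, which I obtain by a local trick: enumerate the basic clopen sets $B_0,B_1,\dots$ of $2^\omega$, put inside each $B_i$ a Menger set $N^{(i)}\subseteq B_i$ of size exactly $\mfd$, and set $M=\bigcup_i N^{(i)}$. Then $M$ is Menger as a countable union of Menger subspaces, $|M|=\mfd=\kappa$, and $M$ is $\kappa$-dense since every nonempty open set contains some $B_i\supseteq N^{(i)}$ and so meets $M$ in at least --- and hence exactly --- $\kappa$ points. Copying this into each $C_n$ yields a $\kappa$-dense Menger subset of $X$. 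Since $M$ is Menger and $A$ is not, they are not homeomorphic, contradicting $\BA^-_\kappa(X)$; this gives $\kappa\neq\mfd$, and the Hurewicz version (unbounded set versus a Hurewicz set) gives $\kappa\neq\mfb$.

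The hard part --- and the only place where the values $\mfb$ and $\mfd$ genuinely enter --- is the existence, provable in $\ZFC$, of a Menger subset of $2^\omega$ of size exactly $\mfd$ and of a Hurewicz subset of size exactly $\mfb$. These cannot be taken $\sigma$-compact, since an uncountable $\sigma$-compact subset of $2^\omega$ contains a Cantor set and so has size $\mfc>\kappa$; one therefore really needs the known $\ZFC$ constructions of Menger-not-$\sigma$-compact sets of size $\mfd$ and Hurewicz-not-$\sigma$-compact sets of size $\mfb$, built respectively from a dominating family and from a $\le^*$-increasing unbounded sequence. This combinatorial core is what I would isolate as a separate lemma and either cite or reproduce in full. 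Everything else --- the reduction to $2^\omega$, the density bookkeeping, and the invariance argument --- is routine once this input is in hand.
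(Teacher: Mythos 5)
The paper itself offers no proof of this statement: it is attributed to Medini as unpublished work, so there is no argument of the author's to compare yours against. Judged on its own terms, your strategy is sound and is almost certainly the intended one: the Menger and Hurewicz properties are exactly the topological invariants calibrated by $\mfd$ and $\mfb$, and your scheme --- a non-Menger (resp.\ non-Hurewicz) $\kappa$-dense set obtained from a dominating (resp.\ unbounded) family placed in one nowhere dense Cantor set $C_0$, a Menger (resp.\ Hurewicz) $\kappa$-dense set obtained by scattering countably many Menger sets of size $\mfd$ (resp.\ Hurewicz sets of size $\mfb$) through a basis, and the closed-subspace/countable-union preservation facts to glue everything together --- does produce two non-homeomorphic $\kappa$-dense subsets of any Cantor-crowded $X$. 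You are also right to flag the one non-trivial external input, and it is a genuine theorem of $\ZFC$: Bartoszy\'nski and Tsaban (Proc.\ AMS 134 (2006)) construct a Hurewicz non-$\sigma$-compact set of size $\mfb$ from a $\leq^*$-increasing unbounded sequence, and a Menger non-$\sigma$-compact set of size $\mfd$. For the latter, the combinatorial core is a dominating family $\{d_\alpha : \alpha < \mfd\}$ with $|\{\alpha : d_\alpha \leq^* g\}| < \mfd$ for every $g$ (built by recursion against an enumerated dominating family, using at each stage that fewer than $\mfd$ functions cannot dominate); the set of these functions together with the finite sets is then ``$\mfd$-concentrated'' on a countable set, because every open set containing the finite sets omits only a $\leq^*$-bounded, hence ${<}\mfd$-sized, hence Menger, remainder. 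Since you propose to isolate and cite or prove exactly this lemma, your outline is complete.

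Two small repairs. First, your disposal of the case $\kappa = \mfc$ by appeal to Lemma \ref{BA-equivs} is not quite licensed, since that lemma concerns perfect Polish spaces while here $X$ is merely Cantor-crowded; but the case is covered by the same machinery: $\bigcup_n C_n$ is a $\sigma$-compact, hence Menger and Hurewicz, $\mfc$-dense set, while a $\mfc$-dense set whose trace on $C_0$ is a dominating (resp.\ unbounded) subset of a copy of $\baire$ is neither. Second, for the negative witness you should place $A_0$ entirely inside the copy of $\baire$ (a dense $G_\delta$ in $C_0$), so that ``$A_0$ is dominating'' is literally meaningful and the inclusion into $\baire$ is defined on all of $A_0$; a $\kappa$-dense subset of that copy is still $\kappa$-dense in $C_0$, so nothing is lost. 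Neither point affects the architecture of the argument.
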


Note that despite Lemma \ref{BA-equivs}, the parametrization is needed in Theorem \ref{medini} as $X$ is not assumed to be a perfect Polish space.

\section{Universal Spaces}
We now turn to the axiom $\mathsf{U}_\kappa$ and its parametrized versions. As noted above $\mathsf{U}_{\aleph_0}$ and $\mathsf{U}_{2^{\aleph_0}}$ are theorems of $\ZFC$. First let us show that these are the only cardinals with these properties provably in $\ZFC$. 

\begin{theorem}
    Let $\kappa < \lambda$ be uncountable cardinals. If $\P$ is the standard forcing to add either $\lambda$ many Cohen reals or $\lambda$ many random reals then $\mathsf{U}_\kappa$ fails in $V[G]$ for any $G \subseteq \P$ which is generic over $V$. \label{Ufailsthm}
\end{theorem}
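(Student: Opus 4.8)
The plan is to reduce to $2^\omega$ via Fact~\ref{Uequiv1} and then run a genericity argument: given any candidate universal set $Z$ of size $\kappa$, I will exhibit $\kappa$ many generic reals lying outside the part of the forcing that computes $Z$ and argue that no continuous injection can map them into $Z$. Throughout, think of $\P$ as adding $\lambda$ generic reals $\langle c_\xi : \xi < \lambda\rangle$ (Cohen or random), indexed so that $\P \hook I$ denotes the restriction adding $\langle c_\xi : \xi \in I\rangle$. Since in $V[G]$ we have $\kappa < \lambda \leq \mfc$, Fact~\ref{Uequiv1} reduces $\mathsf U_\kappa$ to $\mathsf U_\kappa(2^\omega)$, so it suffices, given an arbitrary $Z \subseteq 2^\omega$ with $|Z| = \kappa$, to build $Y \subseteq 2^\omega$ with $|Y| = \kappa$ admitting no continuous injection into $Z$ (this is stronger than refuting embeddability, as noted after the definition of $\mathsf U_{\kappa,\lambda}(X)$).

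First I would locate $Z$ inside a small subextension. As $\P$ is ccc and every real of $V[G]$ has a nice name mentioning only countably many coordinates, the size-$\kappa$ set $Z$ satisfies $Z \in V[G \hook S]$ for some $S \subseteq \lambda$ with $|S| = \kappa$. Because $\lambda > \kappa$, I may fix distinct $\xi_\alpha \in \lambda \setminus S$ for $\alpha < \kappa$ and set $Y = \{ c_{\xi_\alpha} : \alpha < \kappa\}$; the $c_{\xi_\alpha}$ are pairwise distinct generic reals, so $|Y| = \kappa$, and crucially $Y$ depends only on $Z$.

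The key step is to derive a contradiction from a hypothetical continuous injection $f : Y \to Z$. Using Fact~\ref{Lavthm}(1) I extend $f$ to a continuous $\hat f$ defined on a $G_\delta$ set $W \supseteq Y$; its Borel code is a single real, hence lies in $V[G \hook T]$ for some countable $T \subseteq \lambda$. Setting $N = V[G \hook (S \cup T)]$ we have $Z, \hat f \in N$, while for all but countably many $\alpha$ (those with $\xi_\alpha \notin T$) the real $c_{\xi_\alpha}$ is generic over $N$. For each such $\alpha$, $f(c_{\xi_\alpha}) = \hat f(c_{\xi_\alpha}) \in Z \subseteq N$, so, writing $z_\alpha := f(c_{\xi_\alpha})$, the real $c_{\xi_\alpha}$ lands in the Borel set $\hat f^{-1}(z_\alpha)$ coded in $N$. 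In the Cohen case $c_{\xi_\alpha}$ avoids every $\Me$-set coded in $N$, and in the random case every $\Null$-set coded in $N$; hence $\hat f^{-1}(z_\alpha)$ is non-meager, respectively of positive measure. Since the preimages $\{\hat f^{-1}(z) : z \in Z\}$ are pairwise disjoint, and in $2^\omega$ there are only countably many pairwise disjoint non-meager (respectively positive-measure) Borel sets — the quotient of the Borel sets by $\Me$, respectively by $\Null$, is ccc — the values $z_\alpha$ all fall into a fixed countable subset of $Z$. This contradicts the injectivity of $f$ together with there being uncountably many eligible $\alpha$. Thus no such $f$ exists, and as $Z$ was arbitrary, $\mathsf U_\kappa(2^\omega)$, and hence $\mathsf U_\kappa$, fails in $V[G]$.

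The step I expect to require the most care is the forcing bookkeeping in the random case: that the measure algebra adding $\lambda$ random reals adds every real with countable coordinate support (so that $Z \in V[G \hook S]$ and $\hat f \in V[G \hook T]$), and that restriction to any coordinate set is a complete subalgebra over which the omitted coordinate reals remain random (so that $c_{\xi_\alpha}$ is genuinely random over $N$). For Cohen forcing all of this is transparent from the finite-support product structure of $\mathrm{Fn}(\lambda,2)$, whereas for random forcing it rests on standard but less visible facts about the product measure; these are the details I would verify most carefully, the remainder of the argument being identical in the two cases.
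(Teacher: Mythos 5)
Your proposal is correct and follows essentially the same route as the paper's proof (via Lemma \ref{Ufailslemma}): locate $Z$ in a size-$\kappa$ subextension, take uncountably many generic reals on fresh coordinates, lift any continuous map via Fact \ref{Lavthm} to an $\hat f$ coded in a countable subextension, and observe that the fibers over ground-model points must be non-meager (resp.\ of positive measure) and pairwise disjoint, hence countable in number. The only cosmetic difference is that the paper takes the avoiding set of size $\aleph_1$ and shows every continuous map has countable range, while you take it of size $\kappa$ and refute injectivity directly; both yield the theorem.
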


This theorem will follow immediately from the following lemma.

\begin{lemma}
    Let $\kappa < \lambda$ be cardinals and $\P$ either the standard forcing to add $\lambda$ many Cohen reals or $\lambda$ many random reals. Let $G \subseteq \P$ be $\P$-generic. In $V[G]$ the following holds: for every perfect Polish space $X$ if $U \subseteq X$ has size $\kappa$ then there is a $W \subseteq X$ of size $\aleph_1$ so that every continuous $f:W \to U$ has countable range. \label{Ufailslemma}
\end{lemma}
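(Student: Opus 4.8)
The plan is to place all of $W$ inside a single closed copy of Cantor space and then exploit the mutual genericity of the product forcing. First I would fix, using that $X$ is perfect Polish and hence Cantor crowded, a closed nowhere dense copy $C \subseteq X$ of $2^\omega$ via a homeomorphism $e:2^\omega \to C$ coded in $V$, and equip $C$ with the pushforward $e_*\mu$ of the standard product measure (respectively, transport the meager ideal, in the Cohen case). Since $\P$ is ccc and $|U| = \kappa$, a nice-names argument gives $U \subseteq V[G \hook T]$ for some $T \subseteq \lambda$ with $|T| = \kappa$. As $\lambda > \kappa$ I can choose distinct coordinates $\{i_\alpha : \alpha < \omega_1\} \subseteq \lambda \setminus T$; writing $r_\alpha$ for the Cohen (resp.\ random) real added at coordinate $i_\alpha$, I would set $w_\alpha = e(r_\alpha)$ and $W = \{w_\alpha : \alpha < \omega_1\}$. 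These are pairwise distinct by mutual genericity, so $|W| = \aleph_1$.

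Now suppose $f:W \to U$ is continuous. By Fact \ref{Lavthm}(1) it extends to a continuous $\hat f : G \to X$ on a $G_\delta$ set $G \supseteq W$, and since $\P$ is ccc the Borel code of $\hat f$ lies in $V[G \hook S_0]$ for some countable $S_0 \subseteq \lambda$. Fixing $\alpha$ with $i_\alpha \notin S_0$, I would pass to the model $M_\alpha := V[G \hook (\lambda \setminus \{i_\alpha\})]$. Because $i_\alpha \notin T$ we have $U \subseteq M_\alpha$, and because $i_\alpha \notin S_0$ the code of $\hat f$ also lies in $M_\alpha$; meanwhile $r_\alpha$ is Cohen (resp.\ random) generic over $M_\alpha$ by the product lemma. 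Setting $y_\alpha := f(w_\alpha) = \hat f(w_\alpha) \in U \subseteq M_\alpha$, the fiber $F_\alpha := \hat f^{-1}(\{y_\alpha\}) \cap C \cap G$ is then a Borel set coded in $M_\alpha$ which contains the generic point $w_\alpha$. Hence $F_\alpha$ cannot be $e_*\mu$-null (resp.\ meager in $C \cap G$), since otherwise $w_\alpha$ would sit inside a null (resp.\ meager) set coded in $M_\alpha$.

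The decisive step is then a $\ZFC$ fact about the single continuous function $\hat f$: the fibers $\hat f^{-1}(\{y\}) \cap C \cap G$ are pairwise disjoint as $y$ ranges over $X$, so only countably many can have positive $e_*\mu$-measure (the total measure being finite), and likewise in the Cohen case only countably many can be nonmeager (a nonmeager, relatively closed subset of the Polish space $C \cap G$ has nonempty relative interior, and $C \cap G$ is separable). By the previous paragraph every $y_\alpha$ with $i_\alpha \notin S_0$ is such a ``large'' value, so $\{y_\alpha : i_\alpha \notin S_0\}$ is countable; as $S_0$ is countable this leaves only countably many further $\alpha$, whence $f``W$ is countable, as required.

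The main obstacle I expect is the middle step, namely arranging that \emph{both} $U$ and the code of the lift $\hat f$ land in a model over which $w_\alpha$ stays generic. This is precisely why $W$ is drawn from coordinates avoiding the $\kappa$-sized support $T$ of $U$ and why one omits only the single coordinate $i_\alpha$ (passing to $M_\alpha$ rather than to $V[G \hook S_0]$): it yields $y_\alpha \in M_\alpha$ directly, with no appeal to an intersection-of-models theorem. The remaining work is bookkeeping: checking the product/mutual-genericity lemma in the random case, and verifying that a generic point genuinely cannot lie in a small fiber for the transported measure/category structure on $C$, which follows since $e$ and $e^{-1}$ preserve the respective ideals.
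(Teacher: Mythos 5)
Your proposal is correct and follows essentially the same route as the paper's proof: lift $f$ by Kuratowski/Lavrentiev to a Borel-coded $\hat f$ living in a model determined by countably many coordinates, use genericity of the points of $W$ over a model containing both that code and $U$ to conclude that each attained fiber is non-meager (resp.\ of positive measure), and then count pairwise disjoint large sets. The only differences are bookkeeping --- you embed $2^\omega$ into $X$ and drop a single coordinate to form $M_\alpha$, whereas the paper reduces to $X = 2^\omega$ and works over $V[c_\alpha \mid \alpha \in A]$ with a countable support $K$ --- and these do not change the argument.
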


Note that the conclusion of the above lemma states that in particular for every $\kappa < \lambda$ and $X$ a perfect Polish space we have that $\mathsf{U}_{\aleph_1, \kappa} (X)$ fails and in particular there can be no universal space of size $\kappa$, thus Lemma \ref{Ufailslemma} implies Theorem \ref{Ufailsthm}.

\begin{proof}[Proof of Lemma \ref{Ufailslemma}]
Let $\kappa < \lambda$ be uncountable cardinals. First note that if we add $\lambda$ many reals (in any fashion which preserves cardinals) then for any perfect Polish space $X$ if $Z \subseteq X$ has size $\kappa$ then it is zero dimensional and thus can be treated as a subspace of $2^\omega$ by Fact \ref{embedintocantorspace}. Since the existence of continuous functions between subspaces does not depend on the ambient space, we can therefore restrict our attention to the case where $X = 2^\omega$.

We begin with the case of Cohen forcing. Let $\mathbb C_\lambda$ denote the standard forcing to add $\lambda$ many Cohen reals. Let $G \subseteq \mathbb C_\lambda$ be generic and denote the generic Cohen reals by $\{c_\alpha\; | \; \alpha \in \lambda\}$. Work in $V[G]$. Now let $U \subseteq X$ of size $\kappa$. Since $\kappa < \lambda$, by the ccc there is a a set $A \subseteq \lambda$ of size $\kappa$ so that $U \in V[c_\alpha \; | \; \alpha \in A]$. Let $B \subseteq \lambda \setminus A$ be uncountable. We claim that $W:= \{c_\beta \; | \; \beta \in B\}$ is as needed. By mutual genericity we can treat every real in $W$ as being Cohen generic over $V[c_\alpha \; | \; \alpha \in A]$. Thus it suffices to show the following claim:

\begin{claim}
    Suppose $I$ is an uncountable index set, $\mathbb C_I$ is the Cohen forcing for adding Cohen reals indexed by $I$ and $H \subseteq \mathbb C_I$ is generic over $V$ with $H = \{c_i\; | \; i \in I\}$ the corresponding Cohen reals. Let $J \subseteq I$ be an uncountable subset in $V[H]$. No continuous map $f$ from the set $\{c_i\; | \; i \in J\}$ into the ground model can have uncountable range.
\end{claim}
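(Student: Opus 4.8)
The plan is to reduce the statement to a pigeonhole over finite conditions by way of the truth lemma. Suppose toward a contradiction that $f\colon W\to V$ is continuous with uncountable range, where $W=\{c_i\mid i\in J\}$ and, after identifying the target with a Polish space coded in $V$, the range of $f$ lies in the ground model. First I would apply Fact \ref{Lavthm}(1) to extend $f$ to a continuous function $\hat f$ defined on a $G_\delta$ set $G\supseteq W$, so that $W\subseteq G=\mathrm{dom}(\hat f)$ and $\hat f\hook W=f$. The key bookkeeping step is that although $f$ itself literally lives in $V[H]$, the lift $\hat f$ is determined by a single Borel code, i.e.\ by one real of $V[H]$; by the ccc (a nice name for a real uses countably many countable antichains, each mentioning finitely many coordinates) there is a \emph{countable} $A\subseteq I$ with $\hat f$ coded in $M:=V[H\hook A]$. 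Writing $\mathbb C$ for Cohen forcing on finite binary strings and $\dot c$ for the canonical name for the generic real, $\hat f(\dot c)$ is then a canonical $\mathbb C$-name for a real of an $M$-generic extension.

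Next I would use mutual genericity. For every $i\in J\setminus A$ the real $c_i$ is Cohen-generic over $M$, and since $c_i\in W\subseteq G$ we have (by Borel absoluteness between $M[c_i]$ and $V[H]$) that $\hat f(c_i)=f(c_i)\in V\subseteq M$. Setting $y_i:=f(c_i)$, the truth lemma applied in $M[c_i]$ yields a finite condition $s_i\subset c_i$ with
\[
s_i\forces^M_{\mathbb C}\ \hat f(\dot c)=\check{y_i}.
\]

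The counting argument then closes the proof. The assignment $i\mapsto s_i$ maps $J\setminus A$ into the countable set $2^{<\omega}$, and whenever $s_i=s_{i'}$ the consistency of the forcing relation forces $y_i=y_{i'}$, since a single condition cannot force two distinct values. Hence $\{y_i\mid i\in J\setminus A\}=\{f(c_i)\mid i\in J\setminus A\}$ is countable, and since $J\cap A$ is countable the whole range $f``W$ is countable, contradicting our assumption.

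I expect the only genuine subtlety to be the first paragraph: one must pass from $f$, whose literal domain $W$ is spread across all the generic reals, to the lift $\hat f$, whose Borel code is a single real and therefore appears in an intermediate model $V[H\hook A]$ with $A$ countable, and one must check that this code legitimately turns $\hat f(\dot c)$ into a $\mathbb C$-name over $M$ (this is where Fact \ref{Lavthm} and standard Borel absoluteness are used). Once this is in place, the truth lemma together with the pigeonhole over $2^{<\omega}$ does all the work, and continuity enters only through the existence of the lift.
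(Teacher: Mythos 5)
Your proof is correct and follows essentially the same route as the paper's: lift $f$ via Fact \ref{Lavthm}, use the ccc to localize the Borel code of $\hat f$ to a countable set of coordinates, and then count the possible ground-model values attained at the remaining mutually generic Cohen reals. The only difference is cosmetic and lies in the final count: the paper argues that each attained value $x$ must have non-meager preimage $\hat f^{-1}(x)$ in the intermediate model (a Cohen real avoids all meager Borel sets coded there) and that no Polish space admits an uncountable disjoint family of non-meager Borel sets, whereas you attach to each value a finite condition via the truth lemma and pigeonhole over the countably many conditions in $2^{<\omega}$ --- these are the same fact read through the standard dictionary between category and Cohen forcing (a Borel set coded in $M$ is non-meager iff some condition forces the generic into it).
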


\begin{proof}
    Let $f\in V[H]$ be a continuous map from the set $\{c_i\; | \; i \in J\}$ into the ground model. By Fact \ref{Lavthm} there are $G_\delta$ subsets $E, F \subseteq 2^\omega$ and a lift $\hat{f}:E \to F$. Since these are all coded by a real we have that $E, F, \hat{f} \in V[c_k\; | \; k \in K]$ for some {\bf countable} $K \subseteq I$. By standard properties of Cohen forcing, working in $V[H]$, if some $x \in F \cap V$ is such that $f(c_j) = \hat{f}(c_j) = x$ for any $j \in J \setminus K$ then it must be the case that $V[c_k\; | \;k \in K] \models$ ``$\hat{f}^{-1}(x)$ is non meager" since every such $c_j$ is Cohen over $V[c_k\; | \;k \in K]$ and hence cannot be in any Borel set coded into $V[c_k\; | \;k \in K]$ which is meager. But now if $x \neq y \in V \cap F$ we have obviously that $\hat{f}^{-1}(x) \cap \hat{f}^{-1}(y) = \emptyset$, and there cannot be an uncountable family of disjoint, non-meager, closed sets in any Polish space. It follows that there are most countably many $x \in V \cap F$ so that if $j \in J \setminus K$ then $f(c_j) = x$. But since $K$ is countable the result then follows. 
\end{proof}

The case of random forcing is nearly identical. We sketch it for completeness and leave the reader to fill in the details based on the case of Cohen forcing above. Recall that adding $\lambda$ many random reals, concretely means forcing with the positive measure Borel sets in the measure algebra $2^{\lambda \times \omega}$ where we give the product measure on the uniform measure on $2$, see \cite{KenST}. Denote this by $\mathbb B_\lambda$. More generally if $I$ is an index set let $\mathbb B_I$ be the associated random forcing. For each $i \in I$ let $r_i$ denote the canonical $i^{\rm th}$ random real in the extension. As in the case of Cohen forcing, showing the lemma for random forcing reduces to the following claim.

\begin{claim}
    
Suppose $I$ is an uncountable index set and $H \subseteq \mathbb B_I$ is generic over $V$ with $H = \{r_i\; | \; i \in I\}$ the corresponding random reals. Let $J \subseteq I$ be an uncountable subset in $V[H]$. No continuous map $f$ from the set $\{r_i\; | \; i \in J\}$ into the ground model can have uncountable range.
\end{claim}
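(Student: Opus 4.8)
The plan is to run the preceding Cohen-forcing argument essentially verbatim, systematically replacing the meager ideal by the null ideal $\Null$ and category-theoretic genericity by measure-theoretic randomness. Having already reduced to $X = \cantor$ equipped with its uniform (product) probability measure, I would start with an arbitrary continuous $f \in V[H]$ from $\{r_i \mid i \in J\}$ into $V \cap \cantor$. By Fact \ref{Lavthm} there are $G_\delta$ sets $E, F \subseteq \cantor$ and a lift $\hat f : E \to F$ with $\hat f \supseteq f$; since $E$, $F$ and $\hat f$ are each coded by a single real, I can fix a \emph{countable} $K \subseteq I$ with $E, F, \hat f \in V[r_k \mid k \in K]$.

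The crucial measure-theoretic input is that for every $j \in I \setminus K$ the real $r_j$ is random over $V[r_k \mid k \in K]$. This comes from the product structure of the measure algebra: splitting coordinates as $I \times \omega = (K\times\omega) \sqcup ((I\setminus K)\times\omega)$ and decomposing the product measure by Fubini yields $\mathbb{B}_I \cong \mathbb{B}_K \times \mathbb{B}_{I\setminus K}$, and likewise $\mathbb{B}_{I\setminus K} \cong \mathbb{B}_{\{j\}} \times \mathbb{B}_{(I\setminus K)\setminus\{j\}}$, so that $r_j$ is $\mathbb{B}_{\{j\}}$-generic over $V[r_k \mid k \in K]$, i.e.\ random over that model. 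Consequently $r_j$ lies in no null Borel set coded in $V[r_k \mid k \in K]$. Now if $x \in V \cap F$ and $f(r_j) = \hat f(r_j) = x$ for some $j \in J \setminus K$, then $r_j \in \hat f^{-1}(x)$, and $\hat f^{-1}(x)$ is a Borel set coded in $V[r_k \mid k \in K]$ (because $x \in V$ and $\hat f \in V[r_k \mid k \in K]$); by randomness of $r_j$ it cannot be null there, hence has positive measure, a value absolute between $V[r_k \mid k \in K]$ and $V[H]$ by Borel absoluteness of measure.

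For distinct $x \neq y \in V \cap F$ the fibers $\hat f^{-1}(x)$ and $\hat f^{-1}(y)$ are disjoint, so the values attained on $\{r_j \mid j \in J \setminus K\}$ index a pairwise disjoint family of positive-measure subsets of $\cantor$. Since the uniform measure is a probability measure, any such disjoint family is at most countable; and as $K$ itself is countable, the values attained on $\{r_j \mid j \in J \cap K\}$ add at most countably many more. Thus the full range of $f$ is countable, contradicting the hypothesis that it is uncountable.

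The only genuinely new ingredient relative to the Cohen case is the first step of the second paragraph — verifying that a tail random real remains random over the extension by a fixed countable block of coordinates — which is exactly the Fubini/product property of measure algebras and is where I would be most careful. By contrast the concluding counting step is \emph{cleaner} here than for Cohen: a disjoint family of positive-measure sets is bounded in cardinality directly by the finiteness of the measure, with no need for a Baire-category argument enumerating basic open sets.
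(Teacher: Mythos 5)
Your proposal is correct and follows exactly the route the paper intends: the paper only sketches this claim (explicitly leaving the reader to transfer the Cohen argument), and your write-up is precisely that transfer --- lift via Fact \ref{Lavthm}, locate the lift in a countable block $V[r_k \mid k \in K]$, use the product/Fubini decomposition to see tail randoms are random over that model, and conclude via the countability of a disjoint family of positive-measure fibers. Nothing to correct.
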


\begin{proof}
    As before any continuous $f: \{r_i\; | \; i \in J\} \to V$ lifts to a continuous $\hat{f}:E \to F$ with $\hat{f}, E. F \in V[r_k\; | \; k \in K]$ for countable $K$. Now, if $i \in J \setminus K$ and in $V[H]$ $f(r_i) = x$ for some $x \in V$ then in $V[r_k\; | \; k \in K]$ we have that $\hat{f}^{-1}(x)$ has positive measure and therefore there are at most countably many $x$ with this property. The result now follows as before.
\end{proof}

\end{proof}

We now prove, despite that fact that $\mathsf{U}_\kappa$ can fail in the Cohen and random models, these axioms have neither of the consequences of $\BA^-_\kappa$ discussed in the previous section. The first of these is actually an easy consequence of \cite[Theorem 4.7]{independenceresults}, due to Shelah\footnote{To be clear, given \cite[Theorem 4.7]{independenceresults} the consequence is easy. The proof of \cite[Theorem 4.7]{independenceresults} is not easy.}, see also \cite{Burke2009}.

\begin{theorem}[Essentially Shelah, See \cite{independenceresults}]
    It is consistent that $\cc = \aleph_2$ and $\mathsf{U}_{\aleph_1}$ holds but ${\rm non}(\Me) = \aleph_1$.\label{shelah}
\end{theorem}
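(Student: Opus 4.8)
The plan is to import Shelah's Theorem~4.7 of \cite{independenceresults} as a black box and to carry out only the (genuinely easy) topological translation. By Fact~\ref{Uequiv1}, in any model of $\cc = \aleph_2$ the statement $\mathsf{U}_{\aleph_1}$ is equivalent to $\mathsf{U}_{\aleph_1}(2^\omega)$, i.e.\ to the existence of a single set $Z \subseteq 2^\omega$ of size $\aleph_1$ into which every $W \subseteq 2^\omega$ of size $\aleph_1$ embeds. Since $\aleph_1 < \cc$, Fact~\ref{embedintocantorspace} shows that every zero-dimensional separable metric space of size $\aleph_1$ is realized as such a subspace, so $\mathsf{U}_{\aleph_1}$ is exactly the assertion that the class of size-$\aleph_1$ subspaces of $2^\omega$ has a universal element under topological embeddings.

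First I would set up a combinatorial coding of this class. Each $W \subseteq 2^\omega$ carries the standard non-archimedean metric $d(x,y) = 2^{-\Delta(x,y)}$, where $\Delta(x,y)$ is the least coordinate at which $x$ and $y$ differ, and is faithfully described by the structure on $W$ in the countable relational language $\{E_n : n \in \omega\}$ in which $E_n$ interprets ``$d(x,y) \le 2^{-n}$''. The $E_n$ form a decreasing chain of equivalence relations with $\bigcap_n E_n$ the identity, each $E_n$-class splitting into at most two $E_{n+1}$-classes; conversely any structure in this language satisfying these conditions has a countable base of clopen balls, hence is automatically separable and isomorphic to a subspace of $2^\omega$. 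The decisive point is that an embedding of two such structures is exactly an isometric embedding of the underlying ultrametric spaces, and every isometric embedding is \emph{a fortiori} a homeomorphic embedding. Hence a universal model of size $\aleph_1$ in this class yields a universal subspace of $2^\omega$ of size $\aleph_1$, which is precisely a witness to $\mathsf{U}_{\aleph_1}$.

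The remaining step is to invoke Theorem~4.7 of \cite{independenceresults} (see also \cite{Burke2009}) to obtain a model in which $\cc = \aleph_2$, $\non(\Me) = \aleph_1$, and the class above has a universal element of size $\aleph_1$, and then to read off that the three asserted facts hold there. The main obstacle is contained entirely inside this citation and is not reproved here: a universal object of size $\aleph_1$ cannot be manufactured by a forcing axiom, because $\non(\Me) = \aleph_1$ already refutes $\MA_{\aleph_1}$ (indeed $\MA_{\aleph_1}$ implies $\aleph_1 < \mfb \le \non(\Me)$), so Shelah's delicate bespoke iteration is genuinely needed; all we must check on our side is that the universal object produced lies in the class above, after which separability and embeddability into $2^\omega$ are automatic. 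Given Theorem~4.7 the translation above is routine. Finally, it is worth recording the payoff for the surrounding discussion: since $\mfb \le \non(\Me) = \aleph_1$ we have $\mfb = \aleph_1$, so by Theorem~\ref{medini} the axiom $\BA^-_{\aleph_1}$ fails, and the model therefore separates $\mathsf{U}_{\aleph_1}$ from $\BA^-_{\aleph_1}$.
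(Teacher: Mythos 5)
There is a genuine gap, and it sits exactly where you wrote ``the main obstacle is contained entirely inside this citation.'' Theorem~4.7 of \cite{independenceresults} does not produce a universal object in your coded class of ultrametric structures, nor in any class of spaces. What it gives (together with $\cc = \aleph_2$ and $\non(\Me) = \aleph_1$) is a \emph{uniqueness} statement for a restricted class: any two $\aleph_1$-dense subsets of $\mathbb R$ that are non-meager in every open interval are isomorphic as linear orders. By invoking the citation as if it directly handed you ``a universal element of size $\aleph_1$ in the class above,'' you have black-boxed a statement that is not in the black box, and the step you dismiss as ``routine translation'' is in fact the entire content of the proof.

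The missing bridge is the following observation, which is where the hypothesis $\non(\Me) = \aleph_1$ earns its keep (rather than merely being a side condition to be reported): since $\non(\Me) = \aleph_1$, every set $C \subseteq \mathbb R$ of size $\aleph_1$ can be \emph{enlarged} to an $\aleph_1$-dense set $B \supseteq C$ that is non-meager in every interval (add a non-meager set of size $\aleph_1$ inside each basic interval). By Shelah's theorem this $B$ is order-isomorphic, hence homeomorphic, to the one fixed nowhere meager $\aleph_1$-dense set $Z$, so $C$ embeds into $Z$ and $Z$ witnesses $\mathsf{U}_{\aleph_1}$ (via the equivalence of $\mathsf{U}_{\aleph_1}$ with $\mathsf{U}_{\aleph_1}(\mathbb R)$, which is the analogue of the reduction you correctly set up for $2^\omega$). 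Without this enlargement argument your proof does not go through; with it, your page of coding subspaces of $2^\omega$ as structures with equivalence relations $E_n$ becomes unnecessary, since the isomorphisms supplied by the citation are already order isomorphisms of subsets of $\mathbb R$ and hence homeomorphisms. Your closing remarks ($\mfb \le \non(\Me) = \aleph_1$, failure of $\BA^-_{\aleph_1}$) are correct but do not repair the gap.
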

Since $\mfb \leq \non (\Me)$ in $\ZFC$ it follows that in the above model $\mfb =\aleph_1$ as well.

\begin{proof}
    (The proof of) theorem 4.7 of \cite{independenceresults} states that it is consistent with $\cc = \aleph_2$ that $\non (\Me) = \aleph_1$ and given any $A, B \subseteq \mathbb R$ which are both non meager in every open interval and $\aleph_1$-dense, they are in fact isomorphic as linear orders. In particular they are homeomorphic. Since $\non (\Me) = \aleph_1$ in this model, given any $\aleph_1$-sized $C \subseteq \mathbb R$ it can be enlarged to an $\aleph_1$-dense $B \subseteq \mathbb R$ which is non meager in every open interval. Thus in this model any $\aleph_1$-sized set is embeddable (as a linear order even) into the unique (up to isomorphism) $\aleph_1$-dense linear order which is nowhere meager. This latter set is the universal $Z \subseteq \mathbb R$ witnessing $\mathsf{U}_{\aleph_1}$.
\end{proof}

Moving on to cardinal arithmetic for now we show that the conclusion of Lemma \ref{BA-impliesCA} does not follow from the $\mathsf{U}_\kappa$ axioms. Let us state precisely what we will show before getting into it.

\begin{theorem}
    Assume $\GCH$. Let $\kappa < \lambda$ be cardinals so that $\lambda$ has cofinality greater than $\aleph_1$. There is a forcing extension in which $2^{\aleph_1} = \lambda$, $2^{\aleph_0} = \kappa^+$ and $\mathsf{U}_{\kappa}$ holds. In particular it is consistent with $\mathsf{U}_{\aleph_1} + \neg \CH$ that $2^{\aleph_0} < 2^{\aleph_1}$. \label{nocardarith}
\end{theorem}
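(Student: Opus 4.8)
The plan is a two-step forcing over a ground model $V \models \GCH$: first a countably closed forcing $\mathbb Q$ that pushes $2^{\aleph_1}$ up to $\lambda$ while adding no reals, and then a ccc forcing $\dot{\mathbb P}$ that pushes $2^{\aleph_0}$ up to $\kappa^+$ while securing $\mathsf{U}_\kappa$. The crucial design decision is to do the countably closed forcing \emph{first}, so that $\mathsf{U}_\kappa$ is established by the \emph{last} (ccc) stage. This way we never have to confront the question of whether $\mathsf{U}_\kappa$ is preserved by non-ccc forcing, which looks far from clear: $\mathsf{U}_\kappa$ quantifies over all $\kappa$-sized subsets of $2^\omega$, and a countably closed forcing adds new such sets (together with new potential homeomorphisms between them), so direct preservation is not to be expected. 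Instead we only need the much cleaner fact that a short ccc forcing cannot disturb the value $2^{\aleph_1}=\lambda$ created by $\mathbb Q$.

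Concretely, let $\mathbb Q=\mathrm{Add}(\omega_1,\lambda)$ add $\lambda$ subsets of $\omega_1$ by countable conditions. Since $V\models\CH$, $\mathbb Q$ is $\aleph_2$-cc, and being countably closed it adds no reals; hence in $V_1:=V^{\mathbb Q}$ we still have $\CH$ (so $2^{\aleph_0}=\aleph_1$) while $2^{\aleph_1}=\lambda$, using $\mathrm{cf}(\lambda)>\aleph_1$ and GCH. A useful bonus of this stage is that $\mathbb Q$ forces $\diamondsuit$, so $V_1$ is as nice a ground model for the next stage as one could want. Over $V_1$ we then run the ccc forcing $\mathbb P$ witnessing Theorem \ref{shelah}: a ccc iteration of size $\kappa^+$ forcing $2^{\aleph_0}=\kappa^+$ together with a universal $\kappa$-sized subspace of $2^\omega$, i.e. $\mathsf{U}_\kappa$ (for $\kappa=\aleph_1$ this is exactly the Shelah forcing of \cite{independenceresults}; by Facts \ref{embedintocantorspace} and \ref{Uequiv1} it suffices to work inside $2^\omega$). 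This construction only reads off the reals and uses $\CH$/$\diamondsuit$ for its bookkeeping, so the large value of $2^{\aleph_1}$ in $V_1$ is irrelevant to it; in particular it establishes $\mathsf{U}_\kappa$ and $2^{\aleph_0}=\kappa^+$ in $V_1^{\mathbb P}=V^{\mathbb Q*\dot{\mathbb P}}$ exactly as it would over a model of GCH.

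It then remains to check that the whole extension still has $2^{\aleph_1}=\lambda$. Write $\mathbb S=\mathbb Q*\dot{\mathbb P}$. Then $|\mathbb S|\le\lambda\cdot\kappa^+=\lambda$, and $\mathbb S$ is $\aleph_2$-cc (a two-step iteration of an $\aleph_2$-cc forcing followed by a ccc one). Counting nice $\mathbb S$-names for subsets of $\omega_1$ in $V$: each is an $\omega_1$-sequence of antichains, every antichain has size $\le\aleph_1$ by the chain condition, and there are at most $|\mathbb S|^{\aleph_1}=\lambda^{\aleph_1}=\lambda$ such antichains (here GCH together with $\mathrm{cf}(\lambda)>\aleph_1$ gives $\lambda^{\aleph_1}=\lambda$). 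Hence there are at most $\lambda^{\aleph_1}=\lambda$ nice names, so $2^{\aleph_1}\le\lambda$ in the extension, while the $\lambda$ subsets added by $\mathbb Q$ remain distinct, giving $2^{\aleph_1}=\lambda$. As $\mathbb Q$ is countably closed and $\mathbb P$ is ccc, all cardinals are preserved, so the displayed arithmetic is literal. The ``in particular'' clause follows by taking $\kappa=\aleph_1$ and $\lambda=\aleph_3$ (note $\mathrm{cf}(\aleph_3)=\aleph_3>\aleph_1$), yielding $2^{\aleph_0}=\aleph_2<\aleph_3=2^{\aleph_1}$ together with $\mathsf{U}_{\aleph_1}$ and $\neg\CH$; this stands in sharp contrast to Lemma \ref{BA-impliesCA}, which forbids precisely this behaviour for $\BA^-_{\aleph_1}$.

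The main obstacle I expect is not the arithmetic above (which becomes routine once the iteration is arranged in this order) but rather the base ccc forcing producing a universal $\kappa$-sized space with $2^{\aleph_0}=\kappa^+$. For $\kappa=\aleph_1$ this is handed to us by Theorem \ref{shelah}, but for arbitrary $\kappa$ one needs a genuine generalization, since the homogeneity/amalgamation (oracle-cc style) technology underlying these universality results is tuned to $\aleph_1$ and $2^{\aleph_0}=\aleph_2$. One natural route is to force, by a ccc iteration of length $\kappa^+$, both $\non(\Me)=\aleph_1$ (so that every $\kappa$-sized subset of $2^\omega$ extends to a nowhere-meager $\kappa$-dense set) and enough homogeneity that all nowhere-meager $\kappa$-dense sets embed into a single fixed one, which then serves as the universal $Z$; carrying this out at $\kappa^+$ is where the real work lies.
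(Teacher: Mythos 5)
There is a genuine gap here, and you have in fact flagged it yourself: your construction delegates the entire content of the theorem --- a ccc forcing that makes $2^{\aleph_0}=\kappa^+$ and produces a universal $\kappa$-sized subspace of $2^\omega$ --- to Theorem \ref{shelah}, which only exists for $\kappa=\aleph_1$ with $\cc=\aleph_2$, and for general $\kappa$ you offer only a hoped-for generalization of the oracle-cc universality machinery (``carrying this out at $\kappa^+$ is where the real work lies''). So what you have actually proved is the ``in particular'' clause for $\kappa=\aleph_1$ (which is essentially the paper's Theorem \ref{noconsequencesofU2}, obtained there by exactly your device of forcing $\diamondsuit$ and $2^{\aleph_1}=\lambda$ first and then running Shelah's iteration), not the theorem for arbitrary $\kappa$. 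Your outer arithmetic --- countably closed forcing first, $\aleph_2$-cc composition, nice-name counting with $\lambda^{\aleph_1}=\lambda$ --- is fine, but it is scaffolding around a missing core.

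The missing idea is that no Shelah-style universality forcing is needed at all. The paper's proof first forces $2^{\aleph_1}=\lambda$ \emph{together with} $2^{\aleph_0}=\kappa$ (not $\CH$), and fixes $A$ to be the set of reals of that intermediate model $V_0$; this $A$, of size $\kappa$ and $\kappa$-dense, is the intended universal set. It then runs a finite support ccc iteration of length $\kappa^+$ in which the $\alpha$-th iterand is Medini's $\sigma$-centered forcing $\P_{A,B_\alpha}$ of size $\kappa$ (Definition \ref{medinidef}), where $B_\alpha$ is the set of reals of the $\alpha$-th intermediate model; this forcing adds an autohomeomorphism of $2^\omega$ carrying $A$ onto $B_\alpha$. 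A catch-your-tail argument finishes: any $C\subseteq 2^\omega$ of size $\kappa$ in the final model appears in some $V_\alpha$ with $\alpha<\kappa^+$, hence $C\subseteq B_\alpha$, and $B_\alpha$ was forced homeomorphic to $A$, so $C$ embeds into $A$. This is uniform in $\kappa$ and entirely elementary, and the size-$\kappa$ iterands keep the continuum at $\kappa^+$ and leave $2^{\aleph_1}=\lambda$ untouched. If you want to salvage your write-up, replace your second stage with this iteration; your first stage must then also blow up $2^{\aleph_0}$ to $\kappa$ so that the ground-model reals can serve as the universal set.
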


In order to prove this theorem we need a forcing notion due to Medini, see \cite[pp. 138-139]{Medini15}. 
\begin{definition}(Medini)
   Let $\kappa \leq \cc$ be a cardinal and let $A, B \subseteq 2^\omega$ be $\kappa$-dense. Fix partitions $\{A_\alpha\}_{\alpha \in \kappa}$ and $\{B_\alpha\}_{\alpha \in \kappa}$ so that for each $\alpha < \kappa$ we have that $A_\alpha$ and $B_\alpha$ are countable and dense and $A_\alpha \cap A_\beta = \emptyset = B_\alpha \cap B_\beta$ for all distinct $\alpha, \beta \in \kappa$. The forcing notion $\P_{A, B}$ consists of pairs $p = (\pi_p, f_p)$ so that the following hold: 
   
   \begin{enumerate}
       \item $\pi_p$ is a permutation of $2^n$ for some $n = n_p \in \omega$
       \item $f_p$ is a finite partial injection from $A$ to $B$ so that for all $\alpha$ we have that $x \in A_\alpha \cap {\rm dom}(f_p)$ if and only if $f_p(x) \in B_\alpha$. 
       \item $f_p$ respects $\pi_p$ i.e. for all $x \in {\rm dom}(f_p)$ we have that $\pi_p (x\hook n) = f_p(x) \hook n$. 
   \end{enumerate}

   The order is as follows: we let $q \leq p$ if and only if:
   \begin{enumerate}
       \item $f_q \supseteq f_p$ and 
       \item $n_p = n_q$ and $\pi_p = \pi_q$ or $n_q > n_p$ and for all $s \in 2^{n_q}$ we have that $\pi_q(s)\hook n_p = \pi_p (s\hook n_p)$.
   \end{enumerate}
   \label{medinidef}
\end{definition}

This partial order is a more streamlined version of the original one due to Baldwin in Beaudoin from \cite[Lemma 3.1]{BaldwinBeaudoin89}.

The following facts, which may be black boxed for the sake of the proof are in fact all we need.

\begin{fact}
    Let $A, B \subseteq 2^\omega$ be $\kappa$-dense. The following hold:
    \begin{enumerate}
        \item $|\P_{A, B}| = \kappa$
        \item $\P_{A, B}$ is $\sigma$-centered
        \item $\P_{A, B}$ forces the existence of an autohomeomorphism $h:2^\omega \to 2^{\omega}$ so that $h``A = B$
    \end{enumerate}
\end{fact}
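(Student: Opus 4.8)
The plan is to verify the three clauses in turn, spending essentially all of the effort on clause~(2), which is the only one requiring a genuine idea. Clause~(1) is pure counting: a condition is a pair $(\pi_p, f_p)$ with $\pi_p$ ranging over the countable set of finite permutations of the $2^n$ and $f_p$ over the finite partial injections from $A$ into $B$. Since $|A|=|B|=\kappa$ and $\kappa$ is infinite there are $\kappa^{<\omega}=\kappa$ of the latter, so $|\P_{A,B}| = \aleph_0\cdot\kappa=\kappa$, the reverse inequality being witnessed by the $\kappa$ many conditions with singleton $f_p$.

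For clause~(3) I would read the autohomeomorphism off the permutation coordinate. Given $\P_{A,B}$-generic $G$, the ordering on the first coordinate makes $\{\pi_p : p\in G\}$ a coherent system of finite permutations, which induces $h:2^\omega\to 2^\omega$ by $h(x)\hook n_p = \pi_p(x\hook n_p)$. Because each $\pi_p$ is a bijection of $2^{n_p}$ and the system is coherent, $h$ is a well-defined autohomeomorphism provided $D_n=\{p : n_p\geq n\}$ is dense for each $n$; density of $D_n$ is obtained by extending $\pi_p$ coherently one level at a time, which is possible since $f_p$ is finite and already respects $\pi_p$. That $h``A=B$ follows from two more density facts: $E_a=\{p : a\in {\rm dom}(f_p)\}$ is dense for each $a\in A$ and $F_b=\{p : b\in{\rm ran}(f_p)\}$ is dense for each $b\in B$. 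For $E_a$, with $a\in A_\alpha$, one extends $p$ by some $b\in B_\alpha$ of stem $\pi_p(a\hook n_p)$ avoiding ${\rm ran}(f_p)$ (infinitely many choices as $B_\alpha$ is dense); since $f_q$ respects $\pi_q$ and the partition, this forces $h(a)=f_p(a)\in B_\alpha$, giving $h``A\subseteq B$, and $F_b$ gives surjectivity onto $B$.

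\textbf{The main obstacle is clause~(2)}, that $\P_{A,B}$ is $\sigma$-centered, and the crucial point, easy to overlook, is that this uses $\kappa\leq\mfc$ in an essential way. First I would observe that two conditions with a value-clash (some $x$ with $f_p(x)\neq f_q(x)$, or a reused range point) are outright incompatible, and such clashes occur only within a single partition cell; hence any centered piece must commit to one coherent partial injection per cell. This suggests indexing pieces by pairs $(\pi,G)$, where $\pi$ is a finite permutation and $G$ assigns to each $\alpha<\kappa$ a (possibly infinite) partial injection $A_\alpha\to B_\alpha$ compatible with $\pi$, and declaring $p\in P_{(\pi,G)}$ when $\pi_p=\pi$ and $f_p\hook A_\alpha\subseteq G(\alpha)$ for all $\alpha$. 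Each such piece is centered: for finitely many members the union $\bigcup_i f_{p_i}$ is again a partial injection respecting $\pi$ (injective cell-by-cell because it lies inside each $G(\alpha)$, and across cells because distinct cells have disjoint ranges), so $(\pi,\bigcup_i f_{p_i})$ is a common lower bound.

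The real work is to cover $\P_{A,B}$ by \emph{countably many} such pieces. Fixing $\pi$ (countably many choices), I would note that within a single cell the countably many finite partial injections compatible with $\pi$ admit a countable cofinal family $\{\iota^\alpha_j : j\in\omega\}$, each finite one sitting inside some $\iota^\alpha_j$. A global choice $G$ then amounts to a function $g:\kappa\to\omega$, and I need countably many of these meeting every finitely supported prescription $\alpha_i\mapsto j_i$. This is exactly separability of $\omega^\kappa$, which holds \emph{precisely because} $\kappa\leq\mfc$ by the Hewitt--Marczewski--Pondiczery theorem. A countable dense $D\subseteq\omega^\kappa$ yields countably many pieces; given $p$ with $\pi_p=\pi$ one selects $j_i$ with $f_p\hook A_{\alpha_i}\subseteq \iota^{\alpha_i}_{j_i}$ on the touched cells and finds $g\in D$ hitting the $j_i$, placing $p$ in the corresponding piece. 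Ranging over the countably many $\pi$ finishes the decomposition. I expect the separability step to be the conceptual heart of the argument, with everything else routine bookkeeping on finite partial injections and coherent permutations.
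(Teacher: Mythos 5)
Your proposal is essentially correct, and in fact it supplies more than the paper does: the paper deliberately black‑boxes this Fact, citing Medini \cite[pp.~138--139]{Medini15} rather than proving it, so there is no in‑paper argument to compare against. Your treatment of (1) is fine, and your proof of (2) is the right one: the reduction of centeredness to ``one coherent partial injection per cell over a fixed $\pi$,'' the observation that clashes only occur within a cell, and the use of Hewitt--Marczewski--Pondiczery separability of $\omega^\kappa$ (valid exactly because $\kappa\leq\mfc$, which holds since $A$ is a $\kappa$-dense subset of $2^\omega$) to get countably many pieces is the standard and, as far as I can tell, intended route. One simplification worth noting: since $A_\alpha$ and $B_\alpha$ are countable, you can let $\{\iota^\alpha_j\}_j$ enumerate \emph{all} finite partial injections $A_\alpha\to B_\alpha$ compatible with $\pi$, so the ``cofinal family'' language is doing no work.

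The one genuine slip is in your justification of the density of $D_n=\{p: n_p\geq n\}$ in clause (3). Extending $\pi_p$ ``one level at a time'' can fail: the definition only requires $f_p$ to respect $\pi_p$ at level $n_p$, so one can have $x\neq x'$ in ${\rm dom}(f_p)$ with $x\hook(n_p+1)=x'\hook(n_p+1)$ but $f_p(x)(n_p)\neq f_p(x')(n_p)$, and then no extension to a permutation of $2^{n_p+1}$ is possible, since $\pi_q(x\hook(n_p+1))$ would have to equal both $f_p(x)\hook(n_p+1)$ and $f_p(x')\hook(n_p+1)$. The fix is to jump directly to any $n_q>n_p$ large enough to separate all the (finitely many) points of ${\rm dom}(f_p)$ and of ${\rm ran}(f_p)$; then $s\mapsto f_p(x)\hook n_q$ for $s=x\hook n_q$ is a well-defined injection compatible with the projection to $\pi_p$, and a fiber-counting argument extends it to a full permutation of $2^{n_q}$. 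This is a local repair and does not affect the rest of the argument, but as written the step would fail on legitimate conditions.
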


We can now prove Theorem \ref{nocardarith}.
\begin{proof}[Proof of Theorem \ref{nocardarith}]
    Assume $\GCH$ and fix $\kappa < \lambda$ cardinals and assume $\lambda$ has cofinality larger than $\aleph_1$. Let $\P_0$ be the standard forcing to force $2^{\aleph_1} = \lambda$ and $2^\omega = \kappa$ simultaneously. Let $V_0$ be a generic extension of $V$ by $\P_0$. Let $A = B_0$ be the reals of $V_0$. Now define a finite support iteration $\{\P_\alpha, \dot{\Q}_\alpha \; | \; \alpha < \kappa^+\}$ inductively as follows. Assume $\P_\alpha$ has been defined, $G_\alpha \subseteq \P_\alpha$ is generic and let $V_\alpha := V[G_\alpha]$. Inductively assume that $V_\alpha \models 2^{\aleph_0} = \kappa + 2^{\aleph_1} = \lambda$, that $\P_\alpha/\P_0$ is ccc and therefore that $A$ is $\kappa$-dense. Now let $B_\alpha = 2^\omega \cap V_\alpha$ and $\dot{\Q}_\alpha^{G_\alpha} = \Q_\alpha$ the forcing $\P_{A, B_\alpha}$. Note that this forcing is ccc and has size $\kappa$ hence in $V_\alpha^{\Q_\alpha}$ we have that $2^{\aleph_0} = \kappa + 2^{\aleph_1} = \lambda$. The first inequality follows since inductively we have that $\kappa^\omega = \kappa$ and hence there are $\kappa$-many nice names for reals for any Medini forcing as in Definition \ref{medinidef}. Now clearly $\P_{\kappa^+}$ forces $2^{\aleph_0} = \kappa^+ +2^{\aleph_1} = \lambda$. To see that $\mathsf{U}_{\kappa}$ holds, observe that if $C \subseteq 2^\omega$ is a set of reals of size $\kappa$ in $V_{\kappa^+}$ then there is an $\alpha < \kappa^+$ so that $C \subseteq V_{\alpha} \cap 2^\omega := B_\alpha$. But $B_\alpha$ was forced to be homeomorphic to $A$ and hence $C$ is homeomorphic to a subset of $A$, thus $A$ is the universal set desired.
\end{proof}

We remark that an immediate consequence of Theorem \ref{nocardarith} is the following corollary.
\begin{corollary}
    For no uncountable cardinal $\kappa$ does $\mathsf{U}_{\kappa}$ imply $\BA^-_\kappa$. 
\end{corollary}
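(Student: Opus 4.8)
The plan is to read the corollary straight off Theorem \ref{nocardarith}, using the cardinal-arithmetic restriction that $\BA^-_\kappa$ imposes via Lemma \ref{BA-impliesCA}. Fix an arbitrary uncountable cardinal $\kappa$; it suffices to produce a single model in which $\mathsf{U}_\kappa$ holds while $\BA^-_\kappa$ fails, since exhibiting such a model for every uncountable $\kappa$ shows that the implication never holds.

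First I would invoke Theorem \ref{nocardarith} with this $\kappa$ and a carefully chosen $\lambda$. That theorem requires $\kappa < \lambda$ with $\mathrm{cf}(\lambda) > \aleph_1$, and produces (from $\GCH$) a forcing extension in which $\mathsf{U}_\kappa$ holds, $2^{\aleph_0} = \kappa^+$ and $2^{\aleph_1} = \lambda$. The one free choice is the size of $\lambda$, and the point is to take it strictly above $\kappa^+$. I would take $\lambda = \kappa^{++}$ (or any successor cardinal exceeding $\kappa^+$): being a successor cardinal it is regular, so $\mathrm{cf}(\lambda) = \lambda \geq \aleph_3 > \aleph_1$, and plainly $\kappa < \kappa^+ < \lambda$, so all hypotheses of Theorem \ref{nocardarith} are met.

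Next I would deduce the failure of $\BA^-_\kappa$ in this model by pure cardinal arithmetic. Since $\kappa$ is uncountable we have $\aleph_1 \leq \kappa$, so monotonicity of cardinal exponentiation gives
\[
2^\kappa \;\geq\; 2^{\aleph_1} \;=\; \lambda \;>\; \kappa^+ \;=\; 2^{\aleph_0}.
\]
In particular $2^\kappa \neq 2^{\aleph_0}$ in this model. But Lemma \ref{BA-impliesCA} states that $\BA^-_\kappa$ entails $2^{\aleph_0} = 2^\kappa$; hence $\BA^-_\kappa$ must fail here, even though $\mathsf{U}_\kappa$ holds. This is exactly the required non-implication.

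I do not expect any genuine obstacle: the substance has already been isolated in Theorem \ref{nocardarith} and Lemma \ref{BA-impliesCA}, and what remains is bookkeeping. The only step needing a moment's care is the selection of $\lambda$, where I must simultaneously satisfy the cofinality hypothesis $\mathrm{cf}(\lambda) > \aleph_1$ of Theorem \ref{nocardarith} and the strict inequality $\lambda > \kappa^+$ that makes $2^\kappa > 2^{\aleph_0}$ genuine; choosing $\lambda$ to be a successor cardinal above $\kappa^+$ secures both at once.
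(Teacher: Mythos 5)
Your argument is correct and is exactly the route the paper intends: the corollary is stated as an immediate consequence of Theorem \ref{nocardarith}, obtained by choosing $\lambda > \kappa^{+}$ of cofinality above $\aleph_1$ so that $2^{\kappa} \geq 2^{\aleph_1} = \lambda > \kappa^{+} = 2^{\aleph_0}$ in the extension, contradicting the conclusion $2^{\aleph_0}=2^{\kappa}$ of Lemma \ref{BA-impliesCA} if $\BA^-_\kappa$ were to hold. Your explicit choice $\lambda = \kappa^{++}$ and the cofinality check are exactly the bookkeeping the paper leaves implicit.
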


It would be nice to obtain this Corollary using Theorem \ref{shelah} as well however in that case I only know what to do when the continuum is $\aleph_2$. In particular, to the best of my knowledge, it is open if the Medini forcing notions used in the proof of Theorem \ref{medini} can add a dominating real (when $A$ and $B$ are both, say, nowhere meager). 

The proofs of Theorems \ref{shelah} and \ref{nocardarith} are in fact compatible. An inspection of the proof of Theorem \ref{shelah} in \cite{independenceresults} shows that we just need $\diamondsuit$ to hold over the ground model. Thus, by assuming $\diamondsuit$ in the ground model and applying Shelah's forcing as opposed to Medini's we in fact get the following consistency result. This strengthens Theorem \ref{noconsequencesofU1} from the introduction.
\begin{theorem}
    It is consistent that $\mathsf{U}_{\aleph_1}$ holds, $\non (\Me) = \aleph_1 < \cc =\aleph_2$ and $2^{\aleph_1} = \lambda$ for any cardinal $\lambda$ of cofinality larger than $\aleph_1$. \label{noconsequencesofU2}
\end{theorem}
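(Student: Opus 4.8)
The plan is to follow the remark preceding the statement and to reprove Theorem~\ref{shelah}, but over a ground model that already satisfies $2^{\aleph_1}=\lambda$. The key point is that Shelah's forcing from \cite[Theorem~4.7]{independenceresults} needs only $\diamondsuit$ in the ground model, so the construction factors into a preparatory, reals-preserving forcing that pins down $2^{\aleph_1}$ while keeping $\diamondsuit$, followed by Shelah's forcing, which is solely responsible for $\cc=\aleph_2$, $\non(\Me)=\aleph_1$, and the order-isomorphism of all $\aleph_1$-dense nowhere meager sets of reals.

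First I would build the ground model. Starting from a model of $\GCH$, force with $\mathrm{Add}(\omega_1,\lambda)$, whose conditions are partial functions $p:\omega_1\times\lambda\to 2$ with countable domain, ordered by reverse inclusion. This forcing is $\sigma$-closed, hence adds no reals and preserves $\CH$, and $\sigma$-closed forcing preserves $\diamondsuit$ (the usual single-chain argument: building a decreasing $\omega_1$-chain of conditions, taking lower bounds at countable limits, produces along a club a coherent family of decided initial segments, whose union is guessed by the ground-model $\diamondsuit$-sequence). Moreover, since $\mathrm{cf}(\lambda)>\aleph_1$ we have $\lambda^{\aleph_1}=\lambda$ under $\GCH$, so $|\mathrm{Add}(\omega_1,\lambda)|=\lambda$ and a nice-name count gives $2^{\aleph_1}=\lambda$ in the extension, while the $\aleph_2$-chain condition preserves all cardinals and cofinalities. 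Call the resulting model $W$; it satisfies $\diamondsuit$, $\CH$, and $2^{\aleph_1}=\lambda$, and note $\lambda\ge\aleph_2$ since $\mathrm{cf}(\lambda)>\aleph_1$.

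Next I would force over $W$ with Shelah's poset $\mathbb{S}$ from \cite[Theorem~4.7]{independenceresults}. Because $\diamondsuit$ holds in $W$, the extension $W[G]$ satisfies $\cc=\aleph_2$, $\non(\Me)=\aleph_1$, and the statement that any two $\aleph_1$-dense subsets of $\mathbb{R}$ that are non meager in every interval are order isomorphic. The verification of $\mathsf{U}_{\aleph_1}$ is then identical to the proof of Theorem~\ref{shelah}: since $\non(\Me)=\aleph_1$, every $C\subseteq\mathbb{R}$ of size $\aleph_1$ can be enlarged to an $\aleph_1$-dense $B\subseteq\mathbb{R}$ that is non meager in every interval, and every such $B$ is order isomorphic, hence homeomorphic, to the unique (up to order isomorphism) nowhere meager $\aleph_1$-dense set $Z$, so $Z$ is universal. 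This already delivers $\mathsf{U}_{\aleph_1}$ and $\non(\Me)=\aleph_1<\cc=\aleph_2$.

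The remaining, and only delicate, point is that $\mathbb{S}$ must not disturb $2^{\aleph_1}=\lambda$, and this is where I expect the real bookkeeping to lie. I would read off from \cite{independenceresults} that $\mathbb{S}$ is proper, has the $\aleph_2$-chain condition, and has size $\aleph_2$ (standard for an iteration of length $\omega_2$ over a model of $\CH$). By the $\aleph_2$-chain condition every maximal antichain has size $\le\aleph_1$, so a nice $\mathbb{S}$-name for a subset of $\omega_1$ is coded by an $\omega_1$-indexed family of subsets of $\mathbb{S}$ each of size $\le\aleph_1$; computing in $W$, the number of such families is at most $(\aleph_2^{\aleph_1})^{\aleph_1}=\aleph_2^{\aleph_1}\le(2^{\aleph_1})^{\aleph_1}=2^{\aleph_1}=\lambda$, using $2^{\aleph_1}=\lambda$ and $\mathrm{cf}(\lambda)>\aleph_1$ (this mirrors the nice-name count in the proof of Theorem~\ref{nocardarith}). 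Hence $2^{\aleph_1}\le\lambda$ in $W[G]$, and $2^{\aleph_1}\ge\lambda$ is automatic since $\mathbb{S}$ preserves cardinals, giving $2^{\aleph_1}=\lambda$. The genuinely hard ingredient, namely the existence and the properness/chain-condition properties of $\mathbb{S}$, is black-boxed from \cite{independenceresults}; the work on my side is confined to the preparatory forcing and the cardinal-arithmetic accounting above.
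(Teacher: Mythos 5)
Your proposal is correct and follows essentially the same route as the paper, which obtains this theorem by combining the proofs of Theorems \ref{shelah} and \ref{nocardarith}: prepare a ground model with $\diamondsuit$ and $2^{\aleph_1}=\lambda$ via a reals-preserving forcing, then apply Shelah's forcing (which needs only $\diamondsuit$) in place of Medini's. Your additional bookkeeping --- the $\diamondsuit$-preservation by the countably closed preparation and the nice-name count showing $\mathbb{S}$ does not disturb $2^{\aleph_1}=\lambda$ --- correctly fills in the details the paper leaves implicit.
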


We note that we still can only get that some of the nowhere meager linear orders are isomorphic as we can still only perform the iteration for $\omega_2$ steps. 

\section{Spaces which Avoid One Another}
In this section we want to take a more fine grained approach to the homeomorphism types of various $\kappa$-dense subsets of $2^\omega$. In particular, motivated by the results of the previous section let us make the following definitions.

\begin{definition}
    Let $X$ and $Y$ be topological spaces spaces\footnote{In practice we will be interested in subspaces of $2^\omega$ though the definition does not require this.} with $|X| = \kappa$ for some uncountable cardinal $\kappa$. 
    \begin{enumerate}
    \item 
    We say that $X$ {\em avoids} $Y$ if no $Z \subseteq X$ of size $\kappa$ is homeomorphic to a subspace of $Y$.
        \item We say that $X$ {\em strongly avoids} $Y$ if for each $Z \subseteq X$ of size $\kappa$ and every $f:Z \to Y$ continuous the image of $f$ has size ${<}\kappa$. 
        \item We say that $X$ {\em totally avoids} $Y$ if for every $Z \subseteq X$ of size $\kappa$ and every $f:Z \to Y$ continuous the image of $f$ is countable.
    \end{enumerate}
\end{definition}

Note that totally avoiding implies strongly avoiding implies avoiding. Moreover, totally avoiding and strongly avoiding are the same in the case of $\kappa = \aleph_1$ but otherwise totally avoiding a space is seemingly stronger than strongly avoiding it. Observe the following, which is immediate from the definitions.
\begin{observation}
    Let $\kappa < \lambda$ be cardinals. 
\begin{enumerate}
    \item 

    If $\mathsf{U}_{\kappa, \lambda}(X)$ holds for some perfect Polish space $X$ then there is a subspace $Z \subseteq X$ of size $\lambda$ which no subspace $Y \subseteq X$ of size $\kappa$ avoids.
    \item 
    If $\BA_\kappa^-$ holds then no $\kappa$-dense subsets of any perfect Polish space avoid one another. 

    \end{enumerate}
\end{observation}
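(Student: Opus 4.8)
The plan is to simply unwind the definitions; both parts follow by exhibiting, for the relevant pair of spaces, a size-$\kappa$ subset of one that embeds homeomorphically into the other, which is precisely the negation of avoidance. I expect no substantive difficulty, as the statement is essentially definitional.

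For part (1), first I would let $Z \subseteq X$ be the size-$\lambda$ set witnessing $\mathsf{U}_{\kappa, \lambda}(X)$, and claim that no $Y \subseteq X$ of size $\kappa$ avoids this $Z$. So fix such a $Y$. Since $|Y| = \kappa$, the defining property of $\mathsf{U}_{\kappa, \lambda}(X)$ applied to $W = Y$ yields an embedding $f \colon Y \to Z$; by the paper's convention on the word ``embedding'', its image $f``Y$ is homeomorphic to $Y$ and is a subspace of $Z$. Thus $Y$ is itself a size-$\kappa$ subset of $Y$ homeomorphic to a subspace of $Z$, so $Y$ does not avoid $Z$, as required.

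For part (2), I would first invoke the equivalence of (1) and (5) in Lemma \ref{BA-equivs} to pass from $\BA^-_\kappa$ to $\BA^-_\kappa(P)$ for an arbitrary perfect Polish space $P$. Now, given $\kappa$-dense $A, B \subseteq P$ (each necessarily of size $\kappa$, since $P$ is separable and hence $A$ and $B$ are covered by countably many basic open sets each meeting them in a set of size $\kappa$), $\BA^-_\kappa(P)$ provides a homeomorphism $h \colon A \to B$. Then $A$ is a size-$\kappa$ subset of itself homeomorphic, via $h$, to the subspace $B$ of $B$, so $A$ does not avoid $B$; by the symmetry of the hypothesis, $B$ likewise does not avoid $A$.

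The main (and only) point requiring the slightest care is the observation that the size-$\kappa$ witness demanded by the definition of avoidance may be taken to be the \emph{entire} domain set, so that a single embedding or homeomorphism of the whole space already refutes avoidance; there is no genuine obstacle. The one external input is Lemma \ref{BA-equivs}, which is needed in part (2) to move between the unparametrized axiom $\BA^-_\kappa$ and its parametrization $\BA^-_\kappa(P)$ over an arbitrary perfect Polish space.
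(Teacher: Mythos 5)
Your proof is correct and matches the paper's treatment: the paper states this observation as ``immediate from the definitions'' and gives no written argument, and your unwinding of the definitions (taking the whole set as the size-$\kappa$ witness refuting avoidance, and invoking Lemma \ref{BA-equivs} in part (2)) is exactly the intended justification.
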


The point is that what was shown above is actually the following.

\begin{lemma}
Let $\kappa$ be a cardinal and $\P$ be either the standard forcing to add $\kappa$ Cohen reals or $\kappa$ random reals. If $G \subseteq \P$ is generic over $V$ then any uncountable subset of the generic reals given by $G$ (as thought of in any perfect Polish space $P$ coded into the ground model) totally avoids the ground model elements of $P$.
\end{lemma}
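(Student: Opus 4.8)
The plan is to observe that this lemma is, modulo one point about the ambient space, exactly what the two Claims in the proof of Lemma \ref{Ufailslemma} already establish; so the real content is to (i) reduce total avoidance to the statement of those Claims and (ii) check that the Claims survive replacing $2^\omega$ by an arbitrary perfect Polish space $P$ coded in $V$.

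First I would fix notation. Writing $\{g_i \; | \; i \in I\}$ (with $|I| = \kappa$) for the generic reals, interpreted as points of $P$ via a ground-model coded Borel isomorphism between $2^\omega$ (or its relevant comeager/measure-one part) and $P$, I note that distinct generic reals are distinct points, so any uncountable subset $W$ of them has the form $W = \{g_i \; | \; i \in J\}$ for a unique uncountable $J \subseteq I$, with $|W| = |J|$. To show $W$ totally avoids $V \cap P$ I must verify that for every $Z \subseteq W$ with $|Z| = |W|$ and every continuous $f : Z \to V \cap P$ the image is countable. Since $|W|$ is uncountable, every such $Z$ is itself an uncountable set of generic reals, so it suffices to prove the ostensibly stronger assertion that \emph{any} continuous map from an uncountable set of generic reals into $V \cap P$ has countable range. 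This is precisely the content of the Claims.

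Next I would argue that the proofs of those Claims go through verbatim with $P$ in place of $2^\omega$. The only ingredients used are Lavrentiev's theorem (Fact \ref{Lavthm}), which holds for arbitrary Polish spaces and yields a lift $\hat{f} : E \to F$ of $f$ with $E, F \subseteq P$ being $G_\delta$ and $\hat{f}, E, F$ coded in $V[g_k \; | \; k \in K]$ for some countable $K$; and the fact that for $i \in J \setminus K$ the point $g_i$ is generic (Cohen, resp. random) over $V[g_k \; | \; k \in K]$, hence avoids every meager (resp. null) Borel subset of $P$ coded there. Because the interpretation of the $g_i$ in $P$ is fixed by ground-model Borel data, this genericity is inherited from the corresponding property in $2^\omega$ by pushing the meager ideal, respectively a ground-model Borel probability measure, forward along the chosen isomorphism. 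With this in hand the counting argument is identical: if $f(g_i) = x \in V \cap P$ for some $i \in J \setminus K$ then $\hat{f}^{-1}(x)$ is a non-meager (resp. positive-measure) Borel subset of $P$ lying in $V[g_k \; | \; k \in K]$, distinct values of $x$ give disjoint such sets, and a Polish space admits no uncountable family of pairwise disjoint non-meager Baire-property (resp. positive-measure) sets; so only countably many $x$ arise from indices outside $K$, and the at most countably many values coming from $K$ add nothing uncountable.

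The one step requiring care, and the only place where the statement is more than a verbatim rereading of Lemma \ref{Ufailslemma}, is this transfer of genericity from $2^\omega$ to $P$: one must make explicit that ``the generic reals thought of in $P$'' means their images under a ground-model coded isomorphism, so that the category/measure facts used in the Claims hold on $P$ as well. Once that interpretation is pinned down the argument is routine, but I would state it explicitly rather than leave it implicit.
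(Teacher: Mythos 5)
Your proposal is correct and matches the paper's (implicit) argument: the paper offers no separate proof of this lemma, introducing it with ``the point is that what was shown above is actually the following,'' i.e.\ it is exactly the content of the two Claims in the proof of Lemma \ref{Ufailslemma}. Your reduction of total avoidance to those Claims and your explicit handling of the transfer from $2^\omega$ to a general ground-model-coded perfect Polish space $P$ (via a ground-model Borel isomorphism preserving the relevant category/measure structure) fill in precisely the routine details the paper leaves implicit.
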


To some extent the above lemma can be recovered axiomatically.
\begin{proposition}
 Assume $\cov (\Me) = \cc$ or $\cov (\Null) = \cc$. There are are $\cc$-dense $X$ and $Y$ subsets of $2^\omega$ so that $X$ strongly avoids $Y$.   \label{covMprop}
\end{proposition}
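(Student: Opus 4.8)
The plan is to reproduce the Cohen/random analysis of Lemma \ref{Ufailslemma} ``by hand'', replacing mutual genericity by a transfinite recursion of length $\cc$ that exploits $\cov(\Me)=\cc$ (respectively $\cov(\Null)=\cc$). I will build $X=\{x_\alpha:\alpha<\cc\}$ and $Y=\{y_\alpha:\alpha<\cc\}$ simultaneously, and the key reduction I would make first is the following. Given any continuous $f:Z\to Y$ with $Z\subseteq X$ of size $\cc$, lift it by Fact \ref{Lavthm}(1) to a continuous $\hat f:W\to 2^\omega$ with $W\supseteq Z$ a $G_\delta$. The fibers $\hat f^{-1}(y)$ are pairwise disjoint Borel sets, so at most countably many of them are nonmeager (two disjoint sets with the Baire property cannot both be comeager on the same basic clopen set, and there are only countably many of those). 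Hence the points of $Z$ whose image has nonmeager fiber contribute at most $\aleph_0$ values, and it suffices to control the set $S_{\hat f}:=\{x\in X:\hat f(x)\in Y\text{ and }\hat f^{-1}(\hat f(x))\text{ is meager}\}$. Since there are only $\cc$ codes for continuous functions from $G_\delta$ subsets of $2^\omega$ into $2^\omega$, the whole problem reduces to constructing $X,Y$ so that $|S_{\hat f}|<\cc$ for every such $\hat f$.

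For the construction I would enumerate all these codes as $\langle \hat f_\gamma:\gamma<\cc\rangle$ and fix bookkeeping so that each basic clopen set is the ``target'' for $\cc$-many stages (this secures $\cc$-density of both $X$ and $Y$). At stage $\delta$ I first choose $x_\delta$ inside its target clopen set avoiding the union of the meager sets $\hat f_\gamma^{-1}(y_\epsilon)$ for $\gamma,\epsilon<\delta$ with meager fiber; this is a union of fewer than $\cc$ meager sets, so by $\cov(\Me)=\cc$ (which relativizes to any clopen copy of $2^\omega$) such an $x_\delta$ exists. I then choose $y_\delta$ inside its target clopen set avoiding the fewer than $\cc$ points $\hat f_\gamma(x_{\delta'})$ with $\gamma\le\delta$ and $\delta'\le\delta$; this is possible simply because a clopen set has size $\cc$, so no category hypothesis is needed on the $Y$-side.

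The verification is a short index chase. Call $(\gamma,\delta,\epsilon)$ a \emph{bad triple} if $\hat f_\gamma(x_\delta)=y_\epsilon$ and $\hat f_\gamma^{-1}(y_\epsilon)$ is meager; then $S_{\hat f_\gamma}=\{x_\delta:\exists\epsilon\ (\gamma,\delta,\epsilon)\text{ bad}\}$. The $x$-invariant at stage $\delta$ rules out every bad triple with $\gamma<\delta$ and $\epsilon<\delta$, while the $y$-invariant at stage $\epsilon$ rules out every bad triple with $\gamma\le\epsilon$ and $\delta\le\epsilon$. A surviving bad triple therefore satisfies $(\gamma\ge\delta\ \vee\ \epsilon\ge\delta)$ and $(\gamma>\epsilon\ \vee\ \delta>\epsilon)$. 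If one assumes $\delta>\gamma$, the first clause forces $\epsilon\ge\delta$, whence $\gamma>\epsilon$ fails and the second clause forces $\delta>\epsilon$, contradicting $\epsilon\ge\delta$. Thus every surviving bad triple has $\delta\le\gamma$, so $S_{\hat f_\gamma}\subseteq\{x_\delta:\delta\le\gamma\}$ has size at most $|\gamma|+1<\cc$. Combined with the reduction of the first paragraph this gives, for each $Z\subseteq X$ of size $\cc$ and each continuous $f:Z\to Y$, that $|f``Z|\le |S_{\hat f}|+\aleph_0<\cc$, i.e.\ $X$ strongly avoids $Y$.

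The main obstacle, and the point the invariants above are designed to handle, is the interaction between the two \emph{growing} sets: adding a new $y_\epsilon$ can create fresh bad pairs with earlier $x_\delta$'s, so the naive invariant ``$x_\delta$ avoids all currently known meager fibers'' is not preserved along the recursion. Splitting the responsibility between an $x$-invariant and a $y$-invariant, and in particular forbidding the diagonal value $\hat f_\gamma(x_\delta)=y_\delta$ at the common stage, is exactly what collapses every surviving bad pair to $\delta\le\gamma$; getting these indices to mesh is the delicate part. Finally, the case $\cov(\Null)=\cc$ is verbatim with ``meager'' replaced by ``null'', using $\cov(\Null)=\cc$ to place $x_\delta$ outside the relevant union of fewer than $\cc$ null sets and the fact that a finite measure admits at most countably many pairwise disjoint positive-measure Borel sets.
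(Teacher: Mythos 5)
Your argument is correct, and it reaches the statement by a genuinely different route than the paper. The paper fixes a continuous $\in$-increasing chain $\langle M_\alpha : \alpha < \cc\rangle$ of elementary submodels of $H_{\omega_1}$ covering $H_{\omega_1}$ and uses the standard consequence of $\cov(\Me)=\cc$ that over any such model of size ${<}\cc$ there is a pair of \emph{mutually Cohen-generic} reals $x_\alpha, y_\alpha$; the verification is then an index chase in three cases ($f(x_i)=y_i$; $f(x_i)=y_j$ with $j>i$; and $j<i$), the last of which is handled by exactly your observation that a lift admits only countably many pairwise disjoint nonmeager Borel fibers. Your transfinite recursion replaces mutual genericity by two explicit interleaved invariants: $x_\delta$ avoids the fewer than $\cc$ meager fibers $\hat f_\gamma^{-1}(y_\epsilon)$ seen so far (this needs only $\cov(\Me)=\cc$ as a covering statement), and $y_\epsilon$ avoids the fewer than $\cc$ point-images $\hat f_\gamma(x_{\delta'})$ with $\gamma,\delta'\le\epsilon$ (this needs only cardinality). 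These two invariants do precisely the work of the paper's Cases 1 and 2, and your index chase correctly collapses every surviving bad triple to $\delta\le\gamma$, giving $|S_{\hat f_\gamma}|\le|\gamma|+1<\cc$. What your version buys is independence from the Cohen-real characterization of $\cov(\Me)$ and from elementary submodels, at the cost of the bookkeeping you flag as delicate; what the paper's version buys is brevity, since mutual genericity packages both of your invariants (and more) in one stroke. Both arguments, including your $\cov(\Null)$ variant with ``null'' for ``meager'' and the countability of disjoint positive-measure families, are sound.
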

    
\begin{proof}
  We do the case of $\cov (\Me) = \cc$. The case of $\cov (\Null) = \cc$ is almost identical replacing ``Cohen reals" with ``random reals" and ``meager" with ``measure zero". The relevant fact about $\cov (\Me) = \cc$ is that this implies that if $\kappa$ is an uncountable cardinal and $M \prec H_\kappa$ is a model of size ${<} \cc$ then there are two mutually generic Cohen reals over $M$. Now let $\{M_\alpha \; | \; \alpha < \cc\}$ be a continuous $\in$-sequence of elementary submodels of $H_{\omega_1}$ which collectively cover $H_{\omega_1}$. By elementarity plus the fact listed two sentences ago, for each $\alpha < \cc$ there two reals $x_\alpha, y_\alpha \in M_{\alpha + 1}$ which are mutually Cohen generic over $M_\alpha$ (and hence $M_\beta$ for all $\beta < \alpha$ as well). Let $X = \{x_\alpha \; | \; \alpha < \cc\}$ and $Y = \{y_\alpha \; | \; \alpha < \cc\}$. Clearly these can be assumed to be $\cc$-dense since any translation of a Cohen real is still Cohen. 

Let us show that $X$ strongly avoids $Y$. Let $Z \subseteq X$ have size continuum and let $f:Z \to Y$ be continuous. We want to show that $f$ has range of cardinality less than $\cc$. Let $\hat{f}$ be the lift of $f$ to some function on a $G_\delta$ subset $W$ of $2^\omega$. Since $\hat{f}$ and $W$ are coded by reals there is an $\alpha \in \cc$ so that $\hat{f}, W \in M_\alpha$. Fix the least such. If the range of $f$ has cardinality $\cc$ then its restriction to any tail of elements of $Z$ does as well - i.e. without loss of generality we may assume that if $x_i \in Z$ then $i > \alpha$. Let us fix $A \subseteq \cc$ so that $Z = \{x_i\; | \; i \in A\}$. There are now a few cases to consider.

\noindent \underline{Case 1}: There is an $i \in A$ so that $f(x_i) = y_i$. In this case then we can recover $y_i$ from $\hat{f}$ and $x_i$ and therefore in particular $y_i \in M_i[x_i]$ which contradicts mutual genericity.

\noindent \underline{Case 2}: There is an $i \in A$ and a $j > i$ so that $f(x_i) = y_j$. In this case again we can recover $y_j$ from $\hat{f}$ and $x_i$ and therefore in particular $y_j \in M_{i+1}$, which it is not by assumption. 

\noindent \underline{Case 3}: Cases 1 and 2 fail i.e. for every $i \in A$ if $f(x_i) = y_j$ then $j < i$. Thus for any such $i$ and $j$ we have that there is a Cohen real, namely $x_i$, over $M_{j + 1}$ in the Borel set $\hat{f}^{-1}\{y_j\}$, which is coded into $M_{j+1}$ and therefore this set is non-meager. It follows in this case that there can only be countably many $y_j$ in the image of $f$. 

Putting all of this together we get that the image of $f$ contains the ${<}\cc$-many points which appear in $M_\alpha$ alongside at most the countably many points from case 3 and in particular the image of $f$ is not of size continuum. 
\end{proof}

Some set theoretic hypothesis is needed to obtain the conclusion of Proposition \ref{covMprop} above. This is because of a result essentially due to Miller from \cite{Miller83}, though not in this language.
\begin{theorem}[Essentially Miller, \cite{Miller83}]
   In the Sacks model there are no subspaces of $2^\omega$, of size continuum which strongly avoid one another.\label{miller}
\end{theorem}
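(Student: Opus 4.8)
The plan is to deduce the theorem from the corresponding result of Miller in \cite{Miller83}, after recasting that result in the present language. First I would record a convenient reformulation of the negation of strong avoidance. Suppose $X, Y \subseteq 2^\omega$ both have size $\cc$ (which is $\aleph_2$ in the Sacks model). To show that $X$ does \emph{not} strongly avoid $Y$ it suffices to produce a single Borel function $g$ --- say the lift (Fact \ref{Lavthm}) of a continuous map defined on some $G_\delta \supseteq X$ --- with $|g``X \cap Y| = \cc$: for then $Z := X \cap g^{-1}(Y)$ has size $\cc$, since it surjects onto $g``X \cap Y$, while $f := g \hook Z$ is a continuous map $Z \to Y$ whose image is exactly $g``X \cap Y$ and hence has size $\cc$. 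As refuting one direction already shows that $X$ and $Y$ do not strongly avoid one another, the whole theorem reduces to the assertion: in the Sacks model, for every pair $X, Y$ of size-$\cc$ subsets of $2^\omega$ there is a continuous $g$ with $|g``X \cap Y| = \cc$. This is the content (in different language) of Miller's argument, and the bulk of the work is to transcribe it faithfully.

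To carry out the transcription I would invoke the standard features of the iterated Sacks model $V[G]$ over a ground model of $\GCH$: the continuum is $\aleph_2$; every proper initial segment $V[G\hook\alpha]$ with $\alpha < \omega_2$ still satisfies $\CH$, so contains only $\aleph_1$ reals and hence only $\aleph_1$ Borel codes; every real of $V[G]$ lies in some $V[G\hook\alpha]$ with $\alpha < \omega_2$; and Sacks reals admit continuous reading of names, so a real born exactly at a given stage is a continuous (after fusion, injective) image of the Sacks generic at that stage by a function coded strictly earlier. Given $X$ and $Y$ of size $\aleph_2$, these facts let me attach to each point both its stage of appearance and a reading function witnessing it, with the latter coded below that stage.

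The main obstacle --- and the real content of Miller's theorem --- is \emph{uniformity}: a single continuous $f$ must arise from one lift $\hat f$ coded at a fixed stage $\rho < \omega_2$, yet it has to act correctly on $\aleph_2$-many points of $X$ drawn from cofinally many stages. The plan is to handle this by pressing down: the ordinal recording where the relevant name data of a point of $X$ (respectively $Y$) is coded is regressive on a stationary subset of $\{\alpha < \omega_2 : {\rm cf}(\alpha) = \omega_1\}$, so Fodor's lemma fixes it to some $\rho^* < \omega_2$, and since $V[G\hook\rho^*]$ carries only $\aleph_1$ reals a further pigeonhole over the $\aleph_2$ many points pins down a single reading function for each of $X$ and $Y$; composing these yields the desired $g$. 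I expect the delicate step to be verifying that this uniform $g$ genuinely sends $\aleph_2$-many elements of $X$ to \emph{distinct} elements of $Y$ rather than collapsing them to a small set, which is exactly where the minimality of Sacks reals must be combined with $\CH$ at initial segments precisely as in \cite{Miller83}; once that is secured, the reduction of the first paragraph closes the argument and shows, a fortiori, that the hypothesis of Proposition \ref{covMprop} cannot be dropped.
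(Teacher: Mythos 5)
Your reduction is correct and is essentially the paper's own proof: both arguments rest on Miller's theorem from \cite{Miller83} that in the Sacks model every size-continuum $X \subseteq 2^\omega$ maps continuously onto $2^\omega$, from which one restricts to a suitable $Z \subseteq X$ (your $X \cap g^{-1}(Y)$, the paper's transversal of $f^{-1}``Y$) to defeat strong avoidance. Your second and third paragraphs, which attempt to re-derive Miller's theorem via continuous reading of names and Fodor's lemma, are not needed (the paper simply cites the result, as the attribution ``Essentially Miller'' indicates) and as written they stop short of a proof at exactly the step you flag as delicate, so the citation is doing the real work there.
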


We note that in some sense the Sacks model is a canonical model of the failure of $\cov(\Me) + \cov(\Null) = \cc$. 

\begin{proof}
    In \cite{Miller83} the main result is that in the Sacks model every $X \subseteq 2^\omega$ of size continuum continuously surjects onto $2^\omega$. In particular if $X, Y \subseteq 2^\omega$ have size continuum then there is a continuous surjection $f:X \to 2^\omega$ and therefore for some $Z \subseteq X$ we have that $f\hook Z$ is a continuous bijection onto $Y$. 
\end{proof}

Nevertheless, for sake of completeness let us show that at least some of these ideas in fact can be contrived from simple $\ZFC$ combinatorics.
\begin{proposition}
      ($\ZFC$) There are continuum dense subsets $X, Y \subseteq 2^\omega$ so that $X$ avoids $Y$. \label{avoid}
\end{proposition}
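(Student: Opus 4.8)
The plan is to build $X = \{x_\alpha : \alpha < \mfc\}$ and $Y = \{y_\alpha : \alpha < \mfc\}$ by a transfinite recursion of length $\mfc$, diagonalizing against every potential homeomorphism between a size-$\mfc$ subset of $X$ and a subspace of $Y$. The key reduction, via Fact \ref{Lavthm}(2), is that any such homeomorphism lifts to an \emph{injective} continuous map $\hat{h}\colon G \to 2^\omega$ with $G$ a $G_\delta$ subset of $2^\omega$; since such a map is coded by a real there are only $\mfc$ of them, so I would enumerate all injective continuous maps on $G_\delta$ domains as $\langle \hat{h}_\beta : \beta < \mfc \rangle$. It then suffices to guarantee that for every $\beta$ the coincidence set $C_\beta := \{x \in X : x \in G_\beta,\ \hat{h}_\beta(x) \in Y\}$ has size ${<}\,\mfc$: if $Z \subseteq X$ with $|Z| = \mfc$ were homeomorphic to a subspace of $Y$, its lift would be some $\hat{h}_\beta$ and then $Z \subseteq C_\beta$, a contradiction. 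Thus $X$ avoids $Y$.

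At stage $\alpha$, having chosen $\{x_\xi, y_\xi : \xi < \alpha\}$, I would first pick $x_\alpha$ and then $y_\alpha$, each time avoiding a forbidden set of size ${<}\,\mfc$. When choosing $x_\alpha$ I forbid the points $\hat{h}_\beta^{-1}(y_\xi)$ for $\beta, \xi < \alpha$; because each $\hat{h}_\beta$ is injective this contributes at most one point per pair, so fewer than $\mfc$ points are excluded. When choosing $y_\alpha$ I forbid the points $\hat{h}_\beta(x_\xi)$ for $\beta < \alpha$ and $\xi \le \alpha$, again ${<}\,\mfc$ many. A bookkeeping check then shows a coincidence $\hat{h}_\beta(x_\alpha) = y_\gamma$ can only occur when $\alpha \le \beta$: for $\alpha > \beta$, the choice of $x_\alpha$ rules it out when $\gamma < \alpha$ (here $\beta < \alpha$), the choice of $y_\alpha$ rules out $\gamma = \alpha$, and the later choice of $y_\gamma$ rules out $\gamma > \alpha$ (here $\beta < \gamma$ and $\alpha \le \gamma$). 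Consequently $C_\beta \subseteq \{x_\xi : \xi \le \beta\}$, which has size $|\beta| + \aleph_0 < \mfc$, exactly as required.

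To also make $X$ and $Y$ be $\mfc$-dense, I would partition $\mfc$ into $\mfc$ many cofinal blocks, one for each basic clopen $U \subseteq 2^\omega$, and at a stage in the block for $U$ choose $x_\alpha, y_\alpha \in U$; this is possible since $|U| = \mfc$ while the forbidden set has size ${<}\,\mfc$, and it forces every nonempty open set to meet each of $X, Y$ in a set of size $\mfc$. Adding the already-chosen points to the forbidden sets keeps the $x_\alpha$ (resp.\ $y_\alpha$) distinct, so $|X| = |Y| = \mfc$. The main thing to get right is the interleaving argument of the second paragraph: one must verify that the commitment ``no new coincidence for $\hat{h}_\beta$ is created after stage $\beta$'' is consistent across the \emph{simultaneous} construction of both sets and genuinely confines each $C_\beta$ below $\mfc$. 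Everything else is a routine cardinality count, using that $|\alpha| < \mfc$ for $\alpha < \mfc$ and that injective continuous maps have singleton fibers.
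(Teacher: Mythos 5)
Your proposal is correct and follows essentially the same route as the paper's proof: a length-$\mfc$ recursion diagonalizing against an enumeration of the Lavrentiev lifts (injective continuous maps with $G_\delta$ domain), using injectivity to keep the forbidden sets of size ${<}\,\mfc$ at each stage and thereby confining each coincidence set to an initial segment. The only nitpick is in the density bookkeeping: there are countably many basic clopen sets, so you partition $\mfc$ into countably many blocks, and each block must have size $\mfc$ (not merely be cofinal, since $\operatorname{cf}(\mfc)$ may be smaller than $\mfc$) to guarantee $|X \cap U| = |Y \cap U| = \mfc$.
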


In the above $2^\omega$ can be replaced with any uncountable Polish space (or even analytic space) since each such contains a closed copy of $2^\omega$ and totally avoiding/strongly avoiding/avoiding do not depend on the ambient space.

\begin{proof}
    Enumerate $2^\omega = \{z_\alpha \; | \; \alpha \in 2^{\aleph_0}\}$ and enumerate all homeomorphisms between $G_\delta$-subsets of Cantor space as $\{f_\alpha\; | \; \alpha \in 2^{\aleph_0}\}$. We will construct $X = \{x_\alpha\; | \; \alpha \in 2^{\aleph_0}\}$ and $Y = \{y_\alpha \; | \; \alpha \in 2^{\aleph_0}\}$ by recursion, diagonalizing against all possible lifts. It's important to note here that $G_\delta$ subsets of Polish spaces are Polish and hence the image under any continuous function of such a set will be analytic and therefore either countable or contain a perfect set. Since we are only considering homeomorphisms, it follows in particular that the image of any if the $f_\alpha$'s will contain a perfect subset, a fact we will use.

    Fix a countable basis for $2^\omega$ as $\{U_n\; | \; n < \omega\}$ (for instance the standard one defined by finite initial segments would work) and partition $2^{\aleph_0}$ into countably many pieces each of size continuum, say $\{C_n\}_{n < \omega}$ (this will be used to ensure that the $X$ and $Y$ we construct are continuum dense). Inductively we will define sets $X_\alpha$ and $Y_\alpha$ for $\alpha \in 2^{\aleph_0}$ so that 
    \begin{enumerate}
        \item $|X_\alpha| = |Y_\alpha|$ has size $|\alpha + 1|$ and enumerate them as $\{x_i\; | \; i \in \alpha +1\}$ and $Y_\alpha = \{y_i\; | \; i \in \alpha +1 \}$.

        \item $X_\alpha \subsetneq X_\beta$ and $Y_\alpha \subsetneq Y_\beta$

        \item For all $\beta \leq \alpha$, if $x \in X_\alpha \setminus X_\beta$ then $f_\beta(x) \notin Y_\alpha \setminus Y_\beta$. In other words $[f_\beta``(\bigcup_{\alpha \in \cc} X_\alpha) ]\cap (\bigcup_{\alpha \in \cc} Y_\alpha) \subseteq Y_\beta$.
        
        \item If $\alpha \in C_n$ then $X_\alpha \setminus \bigcup_{\beta < \alpha} X_\beta$ and $Y_\alpha \setminus \bigcup_{\beta < \alpha}Y_\beta$ will intersect $U_n$.
    \end{enumerate}

    Supposing we can accomplish this let $X = \bigcup_{\alpha \in 2^{\aleph_0}} X_\alpha$ and $Y = \bigcup_{\alpha \in 2^{\aleph_0}} Y_\alpha$. Note that both of these have size continuum by (1) and in fact are continuum dense by (4).  Moreover, $X$ avoids $Y$. To see this, suppose towards a contradiction that $f: Z \to Y$ is a homeomorphism onto its image for some $Z \subseteq X$ of size $2^{\aleph_0}$. By Fact \ref{Lavthm} there is a $\beta < 2^{\aleph_0}$ so that $f = f_\beta \hook Z$. This is a contradiction though because by (3) we have that $f_\beta `` Z \subseteq Y_\beta$ but $Y_
    \beta$ has size less than continuum and in particular cannot be homeomorphic to $Z$. 
    
    Thus it suffices to show that the construction described above can be performed. Without loss of generality we can assume that $0 \in C_0$ and $z_0 \in U_0$ (if not we can easily rearrange our indexing to make it so). At stage $0$ then let $X_0 = \{z_0\}$. Let $Y_0$ be a singleton consisting of any point in $\mathsf{U}_0$ other than $f_0(z_0)$. Clearly this satisfies (1) - (4). Now assume that $\gamma$ is an ordinal less than $2^{\aleph_0}$ and $X_\alpha$ and $Y_\alpha$ have been constructed for all $\alpha < \gamma$ satisfying (1) through (4). Let $n$ be such that $\gamma \in C_n$. We claim that there is an $x \in \mathsf{U}_n \setminus (\bigcup_{\alpha < \gamma} X_\alpha)$ so that $f_\beta (x)  \notin Y_\alpha$ for any $\alpha < \gamma$ and $\beta \leq \gamma$. Since $\bigcup_{\alpha < \gamma} X_\alpha$ has size less than continuum there are continuum many points in $U_n \setminus (\bigcup_{\alpha < \gamma} X_\alpha)$. Suppose towards a contradiction that for each such $x$ there is a $\beta$ and an $\alpha$ so that $f_\beta (x) \in Y_\alpha$. Since each $Y_\alpha$ has cardinality $|\alpha + 1|$ we get that $\bigcup_{\alpha < \gamma} Y_\alpha$ has cardinality $|\leq \gamma + 1|$. Thus in particular there is a fixed $y \in \bigcup_{\alpha < \gamma} Y_\alpha$ for which the set of $x \in U_n \setminus (\bigcup_{\alpha < \gamma} X_\alpha)$ with $f_\beta (x) = y$ for some $\beta \leq \gamma$ has size continuum. But since $\gamma < \cc$ there are two points in this set, say $x$ and $x'$ for which the witnessing $f_\beta$ is the same. As such $f_\beta$ is not injective and in particular is not a homeomorphism, which is a contradiction. 
    
    Now let $X_\gamma = \bigcup_{\alpha < \gamma} X_\alpha \cup \{z_i\}$ where $i$ is least so that $z_i \in U_n \setminus \bigcup_{\alpha < \gamma} X_\alpha$ and $f_\beta (z_i) \notin \bigcup_{\alpha < \gamma} Y_\alpha$ for all $\beta \leq \gamma$. Finally note that the set $Z_\gamma :=\{f_\beta (x) \; | \; \beta \leq \gamma \, x \in X_\gamma\}$ has size less than continuum and hence there is a $y \in U_n$ not in $Z_\gamma$. Let $j$ be least so that $z_j \in \mathsf{U}_n \setminus Z_\gamma$ and let $Y_\gamma = \bigcup_{\alpha < \gamma} Y_\alpha \cup \{z_j\}$. It is clear that (1) to (4) are satisfied.  
\end{proof}

\begin{remark}
    Note that if $X$ avoids $Y$ then neither can contain a perfect set since in this case one would be homeomorphic to a subset of the other. In particular no analytic set can avoid another analytic set. The above proof however can be done so that $X$ and $Y$ are $\Pi^1_1$ if $V=L$ using Miller's method from \cite{Miller89}. This is simply because the $z_i$ and $z_j$ chosen in the proof (at the inductive step to build the next stage for $X_\gamma$ and $Y_\gamma$) can be made so to code the entire construction up to that point thus satisfying the ``encoding lemma", see e.g. \cite[Lemma 8.24]{Miller89} and the associated discussion, also in Section 7 of the same paper for more details. The reader familiar with this method should be able to reconstruct the details. We neglect to do so here as it would take us too far afield. 
\end{remark}

Whether the above can be improved from ``continuum dense" to $\kappa$-dense for any $\kappa < \cc$ is independent of $\ZFC$. Indeed under $\mfp = \cc$ all $\kappa$-dense subsets of $2^\omega$ are homeomorphic as discussed above and in particular fail to strongly avoid one another while the proof of Theorem \ref{Ufailsthm} gives that for each $\kappa < \cc$ there are large families of $\kappa$-dense sets which all simultaneously totally avoid one another in the Cohen and random models. In short the following holds.
\begin{proposition}
    If $\kappa < \mfp$ then no pair of $\kappa$-dense subsets of $2^\omega$ avoid one another while it is consistent that for all uncountable $\kappa \leq \cc$ there are $X$ and $Y$, $\kappa$-dense subsets of $2^\omega$ so that $X$ totally avoids $Y$. \label{nostrongly avoid}
\end{proposition}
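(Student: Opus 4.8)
The statement splits into a $\ZFC$ implication and a consistency result, and both follow quickly from machinery already in place.

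\emph{The first half.} The plan here is simply to invoke the homogeneity already recorded. If $\kappa < \mfp$ then, by the Lemma asserting that $\mfp > \kappa$ implies $\BA^-_\kappa$, together with the equivalence $\BA^-_\kappa \Leftrightarrow \BA^-_\kappa(2^\omega)$ from Lemma \ref{BA-equivs}, every pair of $\kappa$-dense subsets of $2^\omega$ is homeomorphic. So given $\kappa$-dense $X, Y \subseteq 2^\omega$ (each of size $\kappa$, since $2^\omega$ is separable) there is a homeomorphism $X \to Y$; taking $Z = X$ exhibits a size-$\kappa$ subset of $X$ homeomorphic to a subspace of $Y$, so $X$ does not avoid $Y$, and by symmetry the pair does not avoid one another.

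\emph{The second half.} For the consistency result I would pass to the Cohen model and lean on the Lemma stating that the generic reals totally avoid the ground-model reals. Fix $V \models \GCH$ and force with $\mathbb{C}_\lambda$ for some $\lambda$ of uncountable cofinality, so that $\cc = \lambda$ in $V[G]$; write $\{c_\xi : \xi < \lambda\}$ for the generic reals. Fix an uncountable $\kappa \le \cc = \lambda$ and choose, \emph{in} $V$, disjoint index sets $A, B \subseteq \lambda$ each of size $\kappa$ (for $\kappa < \lambda$ this is trivial, and for $\kappa = \lambda$ it uses $\lambda = \lambda + \lambda$). A standard genericity argument shows that for a ground-model index set of size $\kappa$ the corresponding family of Cohen reals is $\kappa$-dense; hence $X_\kappa := \{c_\beta : \beta \in B\}$ and $Y_\kappa := \{c_\alpha : \alpha \in A\}$ are both $\kappa$-dense subsets of $2^\omega$. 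Factoring $\mathbb{C}_\lambda$ as the product $\mathbb{C}_A \times \mathbb{C}_B \times \mathbb{C}_{\lambda \setminus (A \cup B)}$, the reals $X_\kappa$ are $\mathbb{C}_B$-generic over $V_A := V[c_\alpha : \alpha \in A]$, while $Y_\kappa \subseteq V_A \cap 2^\omega$.

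Now the avoidance Lemma, applied with $V_A$ as the ground model and $\mathbb{C}_B$ as the forcing, says that $X_\kappa$ totally avoids $V_A \cap 2^\omega$; since $Y_\kappa \subseteq V_A \cap 2^\omega$ and total avoidance of a target is inherited by subsets of that target, $X_\kappa$ totally avoids $Y_\kappa$. As $\kappa$ ranged over all uncountable cardinals $\le \cc$, this provides the required pairs simultaneously in the single model $V[G]$; the random model is handled identically, replacing Cohen/meager by random/null. I expect no serious obstacle here: the only points needing care are the $\kappa$-density of the generic subfamilies (a routine but slightly delicate genericity computation, most cleanly verified via the ccc at successor $\kappa$) and the uniform treatment of the endpoint case $\kappa = \cc$, which is exactly why $\lambda$ is taken to satisfy $\lambda = \lambda + \lambda$ so that two disjoint index sets of full size exist.
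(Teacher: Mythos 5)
Your proposal is correct and follows essentially the same route as the paper, which disposes of the first half via $\mathfrak{p} > \kappa \Rightarrow \BA_\kappa^-(2^\omega)$ (so all $\kappa$-dense subsets of $2^\omega$ are homeomorphic and hence cannot avoid one another) and of the second half by citing the Cohen/random model argument from the proof of Theorem \ref{Ufailsthm}, i.e.\ the lemma that uncountably many of the generic reals totally avoid the reals of any intermediate ground model. Your write-up merely supplies more detail than the paper does, including the split into disjoint index sets $A$ and $B$ and the explicit treatment of the endpoint $\kappa = \cc$, all of which is consistent with the paper's intended argument.
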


Also, combining Proposition \ref{avoid} with Theorem \ref{miller} of Miller, it is independent of $\ZFC$ also whether the above can be improved from ``avoids" to ``strongly avoids". In particular we have the following. 
\begin{corollary}
    Consistently there are $\cc$-dense $X, Y \subseteq 2^\omega$ so that $X$ avoids $Y$ but does not strongly avoid $Y$. 
\end{corollary}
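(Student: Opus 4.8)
The plan is to work in the Sacks model and simply combine the two results already established, observing that the two ingredients we need are of exactly complementary strengths. The key point is that ``avoids'' is witnessed by a pure $\ZFC$ construction, while ``strongly avoids'' is provably impossible for continuum-sized pairs in the Sacks model; thus the two notions must come apart there.

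Concretely, I would pass to the Sacks model, i.e.\ the model obtained by a countable support iteration of Sacks forcing of length $\omega_2$ over a ground model of $\CH$. Since Proposition \ref{avoid} is a theorem of $\ZFC$, it holds in this model: there are $\cc$-dense $X, Y \subseteq 2^\omega$ with $X$ avoiding $Y$. This secures the first half of the statement and, crucially, the very same pair $(X,Y)$ will be used for the second half.

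For the second half I would invoke Theorem \ref{miller}. By that theorem, in the Sacks model there is no pair of continuum-sized subspaces of $2^\omega$ that strongly avoid one another. Since $X$ and $Y$ are $\cc$-dense they in particular have size $\cc$, so Theorem \ref{miller} applies to them and yields that $X$ does not strongly avoid $Y$. Combining the two halves, in the Sacks model $X$ avoids $Y$ but does not strongly avoid $Y$, which is exactly the desired consistency statement.

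There is essentially no genuine obstacle here, which is why the result is stated as a corollary: the only things to check are that the construction of Proposition \ref{avoid} is genuinely carried out in $\ZFC$ (so that it survives into the Sacks model) and that its output $X, Y$ have size continuum (they are $\cc$-dense, hence do), so that Miller's theorem is applicable to precisely this pair. Given those two trivial verifications, the argument is immediate.
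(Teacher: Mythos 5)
Your proposal is correct and is exactly the paper's argument: the corollary is obtained by running the $\ZFC$ construction of Proposition \ref{avoid} inside the Sacks model and then invoking Theorem \ref{miller} to rule out strong avoidance for any continuum-sized pair there. Nothing further is needed.
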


The utility of these ideas is that they play well with finite support iterations.

\begin{theorem}
    Let $P$ be a perfect Polish space, $X, Y \subseteq P$, $|X| = \kappa$ has uncountable cofinality and let $\delta$ be an ordinal. Assume $\langle \P_\alpha, \dot{\Q}_\alpha \, | \, \alpha < \delta \rangle$ is a finite support iteration of ccc forcing notions.
    \begin{enumerate}
     \item If $X$ avoids $Y$ and for each $\alpha$ we have that $\forces_\alpha$ ``$\dot{\Q}_\alpha$ forces that $\check{X}$ avoids $\check{Y}$" then $\forces_\delta$``$\check{X}$ avoids $\check{Y}$".
     
        \item If $X$ strongly avoids $Y$ and for each $\alpha$ we have that $\forces_\alpha$ ``$\dot{\Q}_\alpha$ forces that $\check{X}$ strongly avoids $\check{Y}$" then $\forces_\delta$``$\check{X}$ strongly avoids $\check{Y}$".

        \item If $X$ totally avoids $Y$ and for each $\alpha$ we have that $\forces_\alpha$ ``$\dot{\Q}_\alpha$ forces that $\check{X}$ totally avoids $\check{Y}$" then $\forces_\delta$``$\check{X}$ totally avoids $\check{Y}$".
    \end{enumerate}
    \label{iteration}
\end{theorem}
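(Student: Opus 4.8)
The plan is to induct on the length $\delta$, having first reduced the failure of each of the three properties to the existence of a single real. I would begin with Fact \ref{Lavthm}: if $X$ fails to (strongly/totally) avoid $Y$, witnessed by a continuous $f:Z\to Y$ with $Z\subseteq X$ and $|Z|=\kappa$, then $f$ lifts to a continuous $\hat f$ (a homeomorphism onto its image, in the ``avoids'' case) defined on a $G_\delta$ set and coded by a single real. Since $X$ and $Y$ are ground model sets, the set $S=\{x\in X\cap\mathrm{dom}(\hat f):\hat f(x)\in Y\}$ and its image $\hat f``S\subseteq Y$ are computed absolutely from $\hat f$, and $Z:=S$ is again a witness of the same kind. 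Hence failure is equivalent to the existence of a real coding such an $\hat f$ with $|S|=\kappa$ and with $|\hat f``S|$ equal to $\kappa$ (strongly avoids), uncountable (totally avoids), or with $\hat f\restriction S$ a homeomorphism onto a subspace of $Y$ (avoids). Because the iteration is ccc, all the cardinalities involved are absolute between the relevant models, so it suffices to produce such a witness at an earlier stage and contradict the induction hypothesis.

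The successor and uncountable-cofinality cases are routine. At a successor $\delta=\alpha+1$ the conclusion is exactly the hypothesis on $\dot{\Q}_\alpha$ applied over $V^{\P_\alpha}$, where the induction hypothesis supplies the property. If $\delta$ is a limit of uncountable cofinality, then by the ccc any witnessing real $\hat f\in V^{\P_\delta}$ already lies in $V^{\P_\alpha}$ for some $\alpha<\delta$; the associated $S$ and image have the same cardinality there, so $\hat f$ witnesses failure already in $V^{\P_\alpha}$, contradicting the induction hypothesis.

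The hard case, and the only place the assumption $\mathrm{cf}(\kappa)>\omega$ is used, is a limit $\delta$ of cofinality $\omega$, where a witnessing $\hat f$ may be genuinely new and reside at no bounded stage. Write $\delta=\sup_n\delta_n$. The key observation is that although the \emph{name} for $\hat f$ has unbounded support, each individual \emph{value} is forced by a bounded condition: for $x\in S$ we have $\hat f(x)=r_x\in Y\subseteq V$, a ground model real, so by the forcing theorem some $q_x\in G_\delta$ forces $\dot{\hat f}(\check x)=\check r_x$, and since every condition in a finite support iteration has finite support, $\mathrm{supp}(q_x)$ is bounded by some $\beta_x<\delta$. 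Now $S=\bigcup_n\{x\in S:\beta_x<\delta_n\}$ is an increasing union of $\omega$ pieces, so as $\mathrm{cf}(\kappa)>\omega$ one piece already has size $\kappa$; taking $\alpha=\delta_n<\delta$ large enough (and, in the totally avoiding case, also absorbing an extra $\aleph_1$ points whose images are distinct) I obtain a set of size $\kappa$ of pairs $(x,r_x)$ all of whose witnessing conditions lie in $\P_\alpha$.

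I would then reassemble a witness inside $V^{\P_\alpha}$. Reinterpreting $\dot{\hat f}$ as a $\P_\delta/\P_\alpha$-name there, set $G^*=\{(x,r)\in X\times Y:\mathbbm 1_{\P_\delta/\P_\alpha}\forces\dot{\hat f}(\check x)=\check r\}$. By the factoring $\P_\delta\cong\P_\alpha*(\P_\delta/\P_\alpha)$ together with $\mathrm{supp}(q_x)\subseteq\alpha$, each chosen pair $(x,r_x)$ lies in $G^*$, so $G^*\in V^{\P_\alpha}$ is a partial function from $X$ into $Y$ whose domain has size $\kappa$ and whose image has the requisite size. Moreover $G^*\subseteq\hat f$ in $V^{\P_\delta}$, so it is a restriction of a continuous function (respectively of a homeomorphism), hence itself continuous (respectively a homeomorphism onto its image); as these are absolute between the two models they hold already in $V^{\P_\alpha}$, and $G^*$ witnesses failure there, contradicting the induction hypothesis. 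The main obstacle is precisely this cofinality-$\omega$ step: the insight that unlocks it is that the problematic object, while unbounded as a name, has all of its relevant values pinned down by bounded-support conditions, which can be recombined into a genuine witness at a bounded stage exactly because $\kappa$ has uncountable cofinality.
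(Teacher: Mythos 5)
Your proposal is correct and follows essentially the same route as the paper: induction on $\delta$ with a trivial successor step, the Lavrentiev lift landing at a bounded stage for limits of uncountable cofinality, and, at countable-cofinality limits, decomposing the function into the countably many partial functions decided by bounded-support conditions and using $\mathrm{cf}(\kappa)>\omega$ to find one piece that already witnesses failure. The only point to tighten is that in the strongly-avoiding case you must run the cofinality argument on the \emph{images} of the pieces as well (not just their domains) to get a piece with range of size $\kappa$, exactly as you already do for the totally-avoiding case and as the paper does explicitly.
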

In short the above states that all three versions of avoiding are preserved by finite support iterations of ccc forcing notions.

\begin{proof}
    We prove all three cases simultaneously as they are all similar. Fix $P$ a perfect Polish space and $X, Y \subseteq P$ and assume $X$ avoids (strongly avoids, totally avoids) $Y$. Suppose $\langle \P_\alpha, \dot{\Q}_\alpha \, | \, \alpha < \delta \rangle$ is a finite support iteration of ccc forcing notions and for each $\alpha$ we have that $\forces_\alpha$ ``$\dot{\Q}$ forces that $\check{X}$ avoids (strongly avoids, totally avoids) $\check{Y}$". The proof is by induction on $\delta$. The successor step is by assumption so we focus on the limit step. We fix $\kappa$ so that $|X| = \kappa$ and note that throughout the proof since we are only considering ccc forcing extensions $\kappa$ is not collapsed so there is no ambiguity between forcing extensions on the size of $\kappa$ sized subsets of $X$. There are two cases.

    \noindent \underline{Case 1}: $\delta$ has uncountable cofinality. Let $H_\delta$ be $\P_\delta$-generic\footnote{The usual notation for this object is unfortunately $G_\delta$ which however in this proof needs to be reserved for the countable intersection of open sets.} and work in $V[H_\delta]$ momentarily. Suppose $Z \subseteq X$ has size $\kappa$ and let $f:Z \to Y$ be continuous. By Fact \ref{Lavthm} there are $G_\delta$ subsets $W_X, W_Y \subseteq P$ and a continuous $\hat{f} \supseteq f$ so that $\hat{f}:W_X \to W_Y$. Moreover if $f$ were forced to be a homeomorphism then $\hat{f}$ is as well. By the ccc plus finite support there is an $\alpha < \delta$ so that $\hat{f}, W_X, W_Y \in V[H_\alpha]$ with $H_\alpha = H_\delta \cap \P_\alpha$. Note by induction we know in $V[H_\alpha]$ that $X$ avoids (strongly avoids, totally avoids) $Y$. 

    Work in $V[H_\alpha]$ and let $\dot{Z}$ and $\dot{f}$ be $\P_\delta/H_\alpha$ names for $Z$ and $f$. By relativizing to some condition in $H_\delta / H_\alpha$ if necessary, without loss we can assume that the maximal condition of $\P_\delta / H_\alpha$ forces that $\dot{Z} \subseteq W_X$ and $\dot{f} \subseteq \hat{f}$. If there is a $p \in \P_{\delta}/H_\alpha$ forcing $x \in X$ to be in $\dot{Z}$ then by Schoenfield absoluteness already $x \in W_X$. Similarly if there is a $p \in \P_{\delta}/H_\alpha$ forcing $\dot{f}(x) = y$ for some $x \in X$ and $y \in Y$ then already $y \in W_Y$ and $\hat{f}(x) = y$. Moreover note that since $\dot{Z}$ is forced to have size $\kappa$, the set of such $x$ also has size $\kappa$. 
    
    To conclude, suppose first we're in the case of avoiding and suppose that $\dot{f}:\dot{Z} \to Y$ were forced to be a homeomorphism of its image. Then by the above we would have already in V[$H_\alpha]$ that $\hat{f} \hook X \cap f^{-1} [Y]$ is a homeomorphism from a subset of $X$ of size $\kappa$ into $Y$ in $V[H_\alpha]$ contradicting the fact that in that model $X$ avoids $Y$. The other two cases are almost verbatim.
    
    \noindent \underline{Case 2}: $\delta$ has countable cofinality. Let $\delta_n \nearrow \delta$ be a strictly increasing sequence of ordinals with $n \in \omega$. As in case 1 let $\dot{Z}$ name a subset of $X$ of size $\kappa$ and let $\dot{f}:\dot{Z} \to \check{Y}$ name a continuous function. Let $H_\delta$ be $\P_\delta$-generic and for each $n < \omega$ let $H_n = H_\delta \cap \P_{\delta_n}$ be the $\P_{\delta_n}$-generic given by the subforcing. Work momentarily in $V[H_\delta]$. Let $f_n$ be the partial function from $X$ to $Y$ given by $f_n(x) = Y$ if and only if there is a condition $p \in H_n$ forcing that $\dot{f}(\check{x}) = \check{y}$. Note that by the finite support we have that $\dot{f}^{H_\delta} =\bigcup_{n < \omega} f_n$. Moreover observe that $f_n \in V[H_n]$, where, by inductive assumption, $X$ and $Y$ still avoid (strongly avoid, totally avoid) one another. Since $\kappa$ has uncountable cofinality, there is an $n<\omega$ so that $f_n$ has domain of size $\kappa$. In the case of avoid, this $f_n$ must also be a homeomorphism of its image, which is a contradiction to the inductive hypothesis. In the cases of strongly and totally avoiding, we note that similarly there must be an $m \geq n$ so that $f_m$ has range of size $\kappa$ (in the case of strongly avoiding) or simply range uncountable (in the case of totally avoiding). This $f_m$ then witnesses the same contradiction. 
    \end{proof}

    This theorem allows us to prove some separations - particularly that even the full Baumgartner's axiom does not allow even the $U$ axiom to hold at a higher cardinal - there is no ``step up" by gluing together e.g. $\aleph_1$-dense sets to get a universal set for $\aleph_2$-dense sets. Towards this we need one simple lemma.

    \begin{lemma}
        Let $P$ be a perfect Polish space and suppose $X, Y \subseteq P$ with $|X| = \kappa > \aleph_0$ for some cardinal $\kappa$. If $\P$ is a forcing notion of size less than ${\rm cf}(\kappa)$ and forces that $X$ does not avoid/strongly avoid/totally avoid $Y$ then (already in $V$), we have that $X$ does not avoid/strongly avoid/totally avoid $Y$. 
    \end{lemma}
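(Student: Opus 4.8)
The plan is to argue by contraposition in all three cases simultaneously: assuming $\P$ forces the failure of avoiding (resp.\ strongly/totally avoiding), I will extract a witness to that failure already living in $V$. The main tool is a decomposition of the generic witness into ground-model pieces. Suppose $G \subseteq \P$ is generic and in $V[G]$ there are $Z \subseteq X$ of size $\kappa$ and a continuous $f \colon Z \to Y$ witnessing the relevant failure. For each $p \in \P$ I define, in $V$, the partial function $f_p \colon X \to Y$ by $f_p(x) = y$ iff $p \Vdash (\check{x} \in \dot{Z} \wedge \dot{f}(\check{x}) = \check{y})$; each $f_p$ lies in $V$, is single-valued, and for $p \in G$ satisfies $f_p \subseteq f$, whence $f = \bigcup_{p \in G} f_p$. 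Since a restriction of a continuous function is continuous and continuity of a function between subsets of $P$ is absolute, each $f_p$ is continuous in $V$; in the avoiding case, where $f$ is a homeomorphism onto its image, each $f_p$ is likewise a homeomorphism onto its image, as $f_p^{-1}$ is a restriction of the continuous $f^{-1}$.

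The key counting input is that there are at most $|\P| < \mathrm{cf}(\kappa)$ distinct pieces, and since $|\P| < \mathrm{cf}(\kappa)$ the forcing $\P$ has the $\mathrm{cf}(\kappa)$-chain condition and in particular preserves $\kappa$, so cardinalities of our ground-model sets are unambiguous and a union of $<\mathrm{cf}(\kappa)$ many sets of size $<\kappa$ still has size $<\kappa$. For avoiding: $\mathrm{dom}(f) = \bigcup_{p\in G}\mathrm{dom}(f_p)$ has size $\kappa$, so some $f_{p_0}$ has domain of size $\kappa$; this $f_{p_0} \in V$ is a homeomorphism of a size-$\kappa$ subset of $X$ onto a subspace of $Y$, so $X$ does not avoid $Y$ in $V$. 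For strongly avoiding: $\mathrm{im}(f) = \bigcup_{p\in G}\mathrm{im}(f_p)$ has size $\kappa$, so some $f_{p_0}$ has image (hence domain) of size $\kappa$, witnessing the failure of strong avoidance in $V$.

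The totally-avoiding case is the delicate one and where I expect the main difficulty. Here I would first record a closure property: since each $f_p$ with $p \in G$ is a restriction of the single function $f$, so is any union $f_p \cup f_{p'}$ with $p,p' \in G$ (these conditions being compatible), and such a union again lies in $V$ and is continuous. Fixing a piece $f_{p_0}$ of domain size $\kappa$, for every $p \in G$ the map $f_p \cup f_{p_0} \in V$ is a continuous function from a size-$\kappa$ subset of $X$ into $Y$; if $V \models$ ``$X$ totally avoids $Y$'' its image, and hence $\mathrm{im}(f_p)$, is forced to be countable. Thus every $f_p$ ($p \in G$) has countable image, and $\mathrm{im}(f) = \bigcup_{p\in G}\mathrm{im}(f_p)$ is a union of at most $|\P|$ countable sets, so $|\mathrm{im}(f)| \le |\P| < \mathrm{cf}(\kappa)$.

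The remaining gap is to upgrade ``$|\mathrm{im}(f)| < \mathrm{cf}(\kappa)$'' to ``$\mathrm{im}(f)$ countable'', contradicting the uncountability of the image. This is immediate when $\mathrm{cf}(\kappa) = \aleph_1$ (in particular when $\kappa = \aleph_1$, where totally and strongly avoiding coincide anyway) or whenever $|\P|$ is countable, since then the displayed bound is $\aleph_0$. For the general case the point to exploit is that the witness $f$ is coded by a single real, namely a code for the Lavrentiev lift $\hat f$ of Fact~\ref{Lavthm}: all of $f$ is recovered from $\hat f$ together with the ground-model sets $X$ and $Y$, and for the ccc forcings relevant to the applications in this paper such a real is added by a countably generated part of the generic, which both preserves $\aleph_1$ (so that ``uncountable'' is absolute) and reduces the situation to the countable case already handled. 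I expect verifying this last reduction --- capturing the real coding $\hat f$ by countably much of the generic --- to be the crux of the totally-avoiding case, the avoiding and strongly-avoiding cases being settled purely by the cardinality count above.
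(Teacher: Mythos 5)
Your decomposition of the generic witness into the ground-model pieces $f_p$ (defined by $f_p(x)=y$ iff $p\Vdash \check{x}\in\dot{Z}\wedge\dot{f}(\check{x})=\check{y}$) followed by a count against $|\P|<{\rm cf}(\kappa)$ is exactly the paper's argument. For the avoiding and strongly avoiding cases your write-up is complete and in fact more careful than the paper's one-sentence version: the absoluteness of ``continuous'' and of ``homeomorphism onto its image'' for ground-model partial functions is checked correctly, and the observation that $f_p\cup f_{p_0}$ is again a single-valued, continuous ground-model restriction of $f$ for compatible $p,p_0$ is the right device for handling pieces of small domain. Nothing needs to be added there.

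The difficulty you isolate in the totally avoiding case is genuine, and you are right not to claim to have closed it: the count only yields $|{\rm im}(f)|\le|\P|\cdot\aleph_0$, which is countable exactly when $\P$ is countable (equivalently, automatically, when ${\rm cf}(\kappa)=\aleph_1$); for $\aleph_1\le|\P|<{\rm cf}(\kappa)$ the uncountably many values of $f$ could a priori be spread over uncountably many conditions with every individual $f_p$ having countable image. Be aware that the paper's own proof is no more detailed on this point --- it asserts that a single condition decides a $\kappa$-sized piece ``whose image is already uncountable by a simple counting argument'', and that is precisely the step that needs justification when $|\P|$ is uncountable; moreover this is the configuration in which the lemma is invoked for Theorem \ref{nostepup} (forcings of size $\aleph_1$ acting on an avoiding pair of size $\aleph_2$). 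Your proposed repair, capturing the real coding the lift $\hat{f}$ by a countable fragment of the generic, is not carried out and is not available for an arbitrary $\P$ of size ${<}\,{\rm cf}(\kappa)$: nothing guarantees that this real lies in a countably generated subextension. What your argument does establish in every case is that ${\rm im}(f)$ has size at most $|\P|<{\rm cf}(\kappa)\le\kappa$, i.e.\ the strongly-avoiding clause in full generality (which suffices for the non-embeddability content of the application); but as written neither your proof nor the paper's proves the totally avoiding clause beyond the case of countable $\P$.
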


    \begin{proof}
        We just prove the case of avoiding, the other two being very similar. The point is simply that if $\P = \{p_\alpha \; | \; \alpha < \mu < {\rm cf}(\kappa)\}$ and forces the existence of some $\dot{f}:\dot{Z} \to Y$ continuous with uncountable image then there is some $\alpha < \mu$ deciding a $\kappa$-sized piece of $Z$ whose image is already uncountable by a simple counting argument and this gives a counter example to totally avoiding in the ground model.
    \end{proof}

We are now ready to prove the ``no step up" theorem.
        \begin{theorem}
    Assume $\GCH$. Let $\aleph_1 \leq \kappa < \mu$ be uncountable cardinals with $\mu$ regular. There is a ccc forcing extension in which $2^{\aleph_0} = \mu$, $\BA_{\kappa '}(2^\omega)$ holds for all $\kappa ' \in [\aleph_1, \kappa]$ but $\mathsf{U}_{\lambda, \lambda'} (2^\omega)$ fails for all $\kappa < \lambda < \lambda ' < \mu$. \label{nostepup}
\end{theorem}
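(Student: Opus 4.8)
The plan is to build a finite support ccc iteration $\langle \P_\alpha, \dot{\Q}_\alpha \mid \alpha < \mu\rangle$ of length $\mu$ that dovetails two bookkeeping tasks. On a cofinal set of coordinates I would place Medini forcings $\P_{A,B}$ (Definition \ref{medinidef}), handed out so that in $V[H_\mu]$ every pair of $\kappa'$-dense subsets of $2^\omega$ is matched by an autohomeomorphism, for each $\kappa'\in[\aleph_1,\kappa]$; since each such forcing is $\sigma$-centered of size $\le\kappa<\mu$ and $\mu^{\kappa}=\mu$ under $\GCH$ with $\mu$ regular, the standard argument yields $\BA_{\kappa'}(2^\omega)$ for all $\kappa'\in[\aleph_1,\kappa]$, and, the iterands being nontrivial over a length-$\mu$ iteration, $2^{\aleph_0}=\mu$. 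On a disjoint cofinal set of coordinates I would insert genuine Cohen product forcings: for every regular $\rho\in(\kappa,\mu)$ I arrange $\dot{\Q}_\alpha=\mathbb{C}_\rho$ for cofinally many $\alpha<\mu$. This is the reservoir of avoiding sets. The first routine point is to run both tasks simultaneously on $\mu$ coordinates without conflict, which is easy since $\mu$ is regular.

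Now fix $\kappa<\lambda<\lambda'<\mu$ and a purported universal $Z\subseteq 2^\omega$ with $|Z|=\lambda'$; I must produce $W\subseteq 2^\omega$ with $|W|=\lambda$ that does not embed into $Z$. By the ccc and $\mu=\mathrm{cf}(\mu)$ there is $\beta<\mu$ with $Z\in V[H_\beta]$. I choose regular cardinals $\rho_i$ ($i<\mathrm{cf}(\lambda)$) with $\kappa<\rho_i\nearrow\lambda$ (a single $\rho_0=\lambda$ if $\lambda$ is regular). For each $i$ I pick, using cofinality of the Cohen coordinates, a stage $\beta_i>\beta$ with $\dot{\Q}_{\beta_i}=\mathbb{C}_{\rho_i}$, and let $W_i\subseteq 2^\omega$ be the corresponding block of $\rho_i$ Cohen reals. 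Since $W_i$ is $\mathbb{C}_{\rho_i}$-generic over $V[H_{\beta_i}]\supseteq V[H_\beta]\ni Z$, the lemma stating that an uncountable set of mutually Cohen reals totally avoids the ground-model reals gives that $W_i$ totally avoids $Z$ in $V[H_{\beta_i+1}]$. I then set $W=\bigcup_i W_i$, so that $|W|=\lambda$ and the $W_i$ are disjoint.

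The crux, and the main obstacle, is to propagate each statement ``$W_i$ totally avoids $Z$'' from $V[H_{\beta_i+1}]$ to $V[H_\mu]$, for which I would invoke Theorem \ref{iteration} applied to the tail iteration over base model $V[H_{\beta_i+1}]$. Its hypothesis requires every tail iterand to preserve total avoidance of the size-$\rho_i$ set $W_i$, and I would verify this in two cases. A Medini iterand has size $\le\kappa<\rho_i=\mathrm{cf}(\rho_i)$, so the lemma preceding this theorem (small forcings preserve avoidance) applies directly. A Cohen iterand $\mathbb{C}_{\rho'}$ may be large, so the size bound fails; here I use that any new continuous $f\colon W_i'\to Z$ is coded by a single real and hence, by the ccc, depends on only countably many coordinates of $\mathbb{C}_{\rho'}$, so $f$ already lives in a single-Cohen-real extension of the intermediate model, and single Cohen forcing ($\aleph_0<\mathrm{cf}(\rho_i)$) again preserves total avoidance by that same lemma. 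Thus every iterand preserves it, and Theorem \ref{iteration} gives that $W_i$ totally avoids $Z$ in $V[H_\mu]$.

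Finally I would check that $W$ does not embed into $Z$: if $h\colon W\to Z$ were a homeomorphism onto its image, then for some $i$ the restriction $h\hook W_i$ is a continuous injection of $W_i$ into $Z$ with image of size $\rho_i>\aleph_0$, contradicting that $W_i$ totally avoids $Z$ (such images must be countable). Hence no $Z$ of size $\lambda'$ is universal for size-$\lambda$ sets, i.e. $\mathsf{U}_{\lambda,\lambda'}(2^\omega)$ fails, while $\BA_{\kappa'}(2^\omega)$ holds for $\kappa'\in[\aleph_1,\kappa]$ and $2^{\aleph_0}=\mu$. The delicate points, in order, are the simultaneous bookkeeping of Medini forcings and Cohen blocks, the decomposition of a possibly singular $\lambda$ into regular pieces $\rho_i>\kappa$ so that the size lemma is always applicable, and the reduction of large Cohen iterands to a single Cohen real inside the preservation step.
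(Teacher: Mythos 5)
Your proposal is correct and follows essentially the same route as the paper: a finite support ccc iteration interleaving Medini forcings with blocks of Cohen reals, with the failure of $\mathsf{U}_{\lambda,\lambda'}(2^\omega)$ witnessed by later Cohen blocks totally avoiding the reals of the intermediate model containing $Z$, preserved to the final model via Theorem \ref{iteration}. In fact you supply details the paper leaves as a "straightforward generalization" — the decomposition of a singular $\lambda$ into regular blocks $\rho_i>\kappa$ so the small-forcing preservation lemma applies to the Medini iterands, and the reduction of a large Cohen iterand to countably many coordinates in the preservation step — and these are exactly the right points to check.
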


The same proof works for $\baire$ instead of $2^\omega$ and, in the case of $\kappa = \aleph_1$ also for $\mathbb R$ - though a little more preparation of the ground model \`{a} la e.g. \cite[Theorem 9.2]{ARS85} is needed to allow for large continuum. These can even be all forced simultaneously in one model. For the sake of readability we will simply prove the consistency of $\BA_{\aleph_1}(2^\omega) + \neg \mathsf{U}_{\aleph_2} + 2^{\aleph_0} = \aleph_3$. The general case is a straightforward generalization using standard arguments. 
    \begin{proof}
        Assume $\GCH$ and perform a finite support iteration of length $\aleph_3$ alternating between adding $\aleph_2$ Cohen reals and forcing $\aleph_1$-dense subsets of to be homeomorphic by Medini's forcing from Definition \ref{medinidef}. Clearly careful bookkeeping will ensure that $\BA_{\aleph_1}(2^\omega)$ will hold and this can be done in $\aleph_3$ steps so $2^{\aleph_0} = \aleph_3$ will hold as well. 
        
        Suppose now towards a contradiction that $\mathsf{U}_{\aleph_2}$ holds. By the ccc plus finite support there is an initial stage of the iteration in which the universal set $Z$ was added. By enlarging this $Z$ we can assume that it is the reals of some intermediate model. But then the $\aleph_2$ Cohens that were added later totally avoid these ground model reals and this will be preserved by Theorem \ref{iteration}, which is a contradiction completing the proof.
    \end{proof}

\section{Open Questions}

We feel that this general view of ``Baumgartner type topological axioms" is underdeveloped in the literature and therefore we are intentionally generous in the number of questions we ask in a shameless plea to incite more researchers to consider them. In what follows we list these questions roughly in the order the relevant ideas appeared in the paper. 


The first of these comes from the study of the weak Baumgartner axioms for different cardinalities. For instance, the following basic question is open.
\begin{question}
    If $\kappa < \lambda$ does $\BA^-_\kappa$ follow from $\BA_\lambda^-$? Similarly, can Theorem \ref{medini} be improved to $\BA_\kappa^-$ implies $\mfb > \kappa$?
\end{question}
We note, as discussed in \cite{Stepranswatson87}, that for the full Baumgartner axioms for Polish spaces this is also open. For instance it is not known if $\BA_{\aleph_1} (2^\omega)$ follows from $\BA_{\aleph_2}(2^\omega)$.

Also left open from Section 2 is whether the $\BA^-$ axioms were ever equivalent to the full $\BA$ axioms. A precise question along these lines is the following.
\begin{question}
    Is $\BA_\kappa^- (2^\omega)$ equivalent to $\BA_\kappa (2^\omega)$? What about for $\baire$?
\end{question}

Turning to cardinal characteristics there are many questions left open about the application of Baumgartner type axioms to the study of cardinal characteristics. In fact little is known beyond Theorem \ref{medini} above, and in particular aside from the bounding number we have very little information. A sample question along these lines is the following. 

\begin{question}
    Does $\BA^-_\kappa$ imply $\cov (\Me) \neq \kappa$? Does $\BA$ imply $\cov(\Me) > \aleph_1$?
\end{question}

Moving to the universal axioms we note that we showed many non-implications of the $\mathsf{U}_\kappa$ axioms but few implications. Thus a general question is what do the $\mathsf{U}_\kappa$ axioms actually imply? A test question along these lines is the following.

\begin{question}
    What consequences of $\mathsf{U}_\kappa$ or $\mathsf{U}_{\kappa, \lambda}(X)$ hold? For instance do any of these axioms have an effect on the cardinal $\mfd$?
\end{question}

We note here that in Shelah's model discussed in Theorem \ref{shelah} we have $\non (\Me) < \cov (\Me)$ which is in some sense the defining cardinal characteristic in the Cohen model so $\mathsf{U}_{\aleph_1}$ is independent of this inequality. However I don't know of a model $\mathsf{U}_\kappa$ for some $\kappa > \mfd$ except for the trivial case where $\kappa = \cc$. Similarly one can ask whether $\mathsf{U}_{\aleph_1}$ holds in other well known models given its failure in the Cohen and random models. For instance the following is open.

\begin{question}
    Does $\mathsf{U}_{\aleph_1}$ hold in the Miller, Sacks or Laver models? 
\end{question}

Along the same lines we observed that Miller's theorem from \cite{Miller83} implies that there are no $\cc$-dense $X, Y \subseteq 2^\omega$ so that $X$ strongly avoids $Y$. However I do not know whether there are $\aleph_1$-dense such in this model. 

\begin{question}
    Are there $\aleph_1$-dense $X, Y \subseteq 2^\omega$ which strongly avoid each other in the Miller, Sacks or Laver models? 
\end{question}

Since the Miller and Laver models are also models of $\cov(\Null) = \cov(\Me) = \aleph_1$ the following is also open.
\begin{question}
    Are there $\cc$-dense $X, Y \subseteq 2^\omega$ with $X$ strongly avoids $Y$ in the Laver or Miller model?
\end{question}

We also still do not fully understand the difference between strongly avoiding and totally avoiding. As such we ask the following.

\begin{question}
    Is it consistent that there are $X, Y \subseteq 2^\omega$ which are $\cc$-dense and strongly avoid one another but no such which totally avoid one another?
\end{question}

As a test question to unravel the above we can also look at the following. 
\begin{question}
    How are avoiding, strongly avoiding and totally avoiding affected by forcing axioms like $\MA$ and $\PFA$?
\end{question}

We also hope for further applications of Theorem \ref{iteration}. The issue is that we do not know in general which forcing notions preserve avoiding/strongly avoiding/totally avoiding.
\begin{question}
    Suppose $X$ avoids/strongly avoids/totally avoids $Y$ for some spaces $X$ and $Y$. For which families of forcing notions can we prove preserve these properties of $X$ and $Y$? For instance, do Souslin ccc forcing notions, or even simple examples of such like Hechler forcing?
\end{question}

Finally, in this paper we have avoided discussing the full Baumgartner axiom for Polish spaces even though it is the initial motivation. We finish this paper with a conjecture about these axioms. On the face of it the conjecture is somewhat ambitious, yet we make it in part to highlight how little is known.  
\begin{conjecture}
    Let $X$ be a perfect Polish space. Exactly one of the following holds.
    \begin{enumerate}
        \item $X$ contains a closed nowhere dense $F \subseteq X$ so that any homeomorphism $h:X \to X$ maps $F$ to itself and hence $\BA (X)$ provably fails.

        \item Part 1 fails and $X$ is not one dimensional. In this case $\BA (X)$ is equivalent to $\mfp > \aleph_1$. 

        \item Part 1 fails and $X$ is one dimensional. In this case $\BA (X)$ is equivalent to $\BA$. Moreover if $X$ is like this and $Y$ is as in Part 2 then $\BA (X)$ implies $\BA (Y)$. 
    \end{enumerate}
\end{conjecture}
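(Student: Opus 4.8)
The three clauses are mutually exclusive and exhaustive by fiat---every perfect Polish space either does or does not carry a closed nowhere dense set fixed setwise by all autohomeomorphisms, and failing the latter it is either one dimensional or not---so all the content lies in the parenthetical equivalences. I would begin with clause (1), which should be the only part provable outright in $\ZFC$. If $F \subseteq X$ is closed nowhere dense and $h``F = F$ for every autohomeomorphism $h$, then $X \setminus F$ is dense open, hence Cantor crowded, and by the usual base-by-base construction it carries an $\aleph_1$-dense (in $X$) subset $B \subseteq X \setminus F$; take $A = B' \cup \{p\}$ with $B'$ another such $\aleph_1$-dense subset of $X \setminus F$ and $p \in F$. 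Any $h$ with $h``A = B$ would send $F$ to $F$, hence $p$ to a point of $B \cap F = \emptyset$, a contradiction, so $\BA(X)$ fails. (If one prefers genuinely symmetric witnesses one instead arranges $A \cap F$ and $B \cap F$ to be non-homeomorphic subspaces of $F$.)

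For clause (2) I would treat the two implications separately. The direction $\mfp > \aleph_1 \Rightarrow \BA(X)$ reduces, in the zero dimensional case, via Fact \ref{embedintocantorspace} to $X = 2^\omega$, where the $\sigma$-centered forcing of Definition \ref{medinidef} produces the desired autohomeomorphism and is absorbed by $\MA_{\aleph_1}(\sigma\text{-centered})$, which is exactly $\mfp > \aleph_1$ by Bell's theorem. The positive-dimensional case ($X = \mathbb R^n$, $n>1$, and more general manifolds) is the known Abraham--Shelah / Stepr\={a}ns--Watson phenomenon, and the real work is to isolate precisely which homogeneity of $X$---guaranteed by the failure of clause (1)---allows one to construct local back-and-forth systems and glue them, using lifts from Fact \ref{Lavthm}, into a single global autohomeomorphism. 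The converse $\BA(X) \Rightarrow \mfp > \aleph_1$ is where I expect the genuine difficulty: there is at present no mechanism forcing a Baumgartner-type axiom to raise $\mfp$, even for $X = 2^\omega$, and this is exactly the open question raised in the introduction.

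Clause (3) I would approach by transferring $\aleph_1$-dense sets between a one dimensional $X$ and $\mathbb R$ along local charts, so that $\BA = \BA(\mathbb R)$ and $\BA(X)$ become interprovable in the spirit of the Proposition on page 11 of \cite{Stepranswatson87}; the ``moreover'' is then a reformulation of Conjecture \ref{conj}, for by clause (2) one already has $\mfp > \aleph_1 \Rightarrow \BA(Y)$ for $Y$ not one dimensional, so the implication $\BA(X) \Rightarrow \BA(Y)$ collapses to $\BA \Rightarrow \mfp > \aleph_1$.

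Hence the hard core of the whole conjecture is a single open problem: whether $\BA(X)$ implies $\mfp > \aleph_1$ for one (equivalently any) non-one-dimensional $X$, and dually whether each one dimensional instance is literally equivalent to $\BA(\mathbb R)$. I would regard clause (1) and the forward half of clause (2) as the deliverable part of the plan, while the converse of clause (2) and all of clause (3) are contingent on resolving $\BA \Rightarrow \mfp > \aleph_1$; absent that resolution, the most one can honestly establish is the partition together with $\mfp > \aleph_1 \Rightarrow \BA(X)$ and the reductions showing that the remaining claims all funnel into this one question.
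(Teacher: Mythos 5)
This statement is posed in the paper as an open conjecture; the paper offers no proof of it, only the remark (immediately following the conjecture) that the condition in Part 1 is a simple topological obstruction under which $\BA(X)$ provably fails. Your assessment is therefore the correct one to make, and it matches the paper's own framing: the trichotomy itself is exhaustive and exclusive by definition, clause (1) is the only fragment provable outright in $\ZFC$ (your witness $A = B' \cup \{p\}$ versus $B \subseteq X \setminus F$ is exactly the obstruction the paper describes), the forward half of clause (2) is the known Baldwin--Beaudoin/Medini result that $\mfp > \aleph_1$ yields $\BA_{\aleph_1}(2^\omega)$ together with its higher-dimensional analogues, and everything else funnels into the two open problems the paper highlights in its introduction --- whether $\BA$ implies $\mfp > \aleph_1$ and the Stepr\={a}ns--Watson Conjecture \ref{conj}. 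Your observation that, granting clauses (2) and (3), the ``moreover'' of clause (3) reduces to $\BA \Rightarrow \mfp > \aleph_1$ is precisely the Stepr\={a}ns--Watson reduction quoted in the introduction. So there is no gap to report relative to the paper, because the paper claims no more than you deliver; just be sure to present this as a partial result plus a reduction of the conjecture to named open problems, not as a proof.
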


Note that the condition described in Part 1 of the conjecture is a simple topological obstruction to a space satisfying $\BA_\kappa (X)$ since any dense set intersecting the closed nowhere dense set cannot be mapped to one not intersecting it. Examples include finite dimensional manifolds with boundary (where the boundary is the nowhere dense set). Part of the conjecture therefore is that this simple obstruction in fact characterizes the provable failure of a Baumgartner like axiom.

\bibliographystyle{plain}
\bibliography{weakbaumgartner}
\end{document}